\numberwithin{equation}{section}
\newcommand{\R}{\mathbb R}
\newcommand{\ii}{\int_{\Omega} } 
\newcommand{\be}{\begin{equation}}
\newcommand{\ee}{\end{equation}}
\newcommand{\ba}{\begin{eqnarray}}
\newcommand{\ea}{\end{eqnarray}}
\newcommand{\eps}{\epsilon}
\newcommand{\Om}{\Omega}
\newcommand{\om}{\omega}
\newcommand{\diver}{\mathbf{div}}
\def\dis{\displaystyle}
\newtheorem{theorem}{Theorem}[section]
\newtheorem{proposition}[theorem]{Proposition}
\newtheorem{remark}[theorem]{Remark}
\newtheorem{lemma}[theorem]{Lemma}
\newtheorem{definition}[theorem]{Definition}
\newtheorem{assumption}[theorem]{Assumption}
\newtheorem{claim}[]{Claim}
\title[Controllability for equations of Sobolev-Galpern type]{On the controllability of some equations of Sobolev-Galpern type}
\author{\textsc{Felipe W. Chaves-Silva}}
\thanks{Department of Mathematics, Federal University of Pernambuco, UFPE, CEP 50740-545, Recife, PE, Brazil. 
E-mail: {\tt fchaves@dmat.ufpe.br.}
F. W. Chaves-Silva ERC project number 32084: Semi Classical Analysis
of Partial Differential Equations, ERC-2012-ADG}
\author{\textsc{Diego A. Souza}}
\thanks{ Department of Mathematics, Federal University of Pernambuco, UFPE, CEP 50740-545, Recife, PE, Brazil. 
E-mail: {\tt diego.souza@dmat.ufpe.br}. 
D. A. Souza was partially supported by the ERC advanced grant 668998 (OCLOC) under the EU's H2020 research program.}
\date{}
\begin{document}
\maketitle


\begin{abstract}
 In this paper we deal with the controllability problem for some Sobolev type equations. We show that the equations cannot be driven to zero if the control region is strictly supported within the domain. Nevertheless, we also prove that it is possible to control the equations using controls which have a moving support,  under some assumptions on its movement.
\end{abstract}

\noindent
\textbf{Keywords:} 
	Controllability, Observability, Barenblatt-Zheltov-Kochina equation, 
Benjamin-Bona-Mahony equation, moving controls, gaussian beams. 
\vskip 0.25cm

\noindent
\textbf{Mathematics Subject Classification (2010):} 93B05, 93B07, 93C05, 35M30

\tableofcontents


\section{Introduction}

	Let  $\Omega \subset \mathbb{R}^N$ $(N\geq 1)$ be a bounded connected open set 
	whose boundary $\partial \Omega$ is regular enough.  Let $T > 0$ and $\mathcal{O}$  
	be a nonempty subset of $\Omega\times(0,T)$. We will use the notation 
	$Q =  \Omega \times (0,T)$ and  $\Sigma = \partial \Omega \times (0,T)$.

 	In this paper we deal with controllability properties for some \textit{pseudo-parabolic equations} 
	of the form 
\be\label{generalSobGal}
	(I- \gamma \mathcal{L})\partial_ty +\mathcal{M}y= f,
\ee
 	where $\gamma$ is positive real number and  $\mathcal{L}$ and $\mathcal{M}$  are linear partial 
	differential operators in the spatial variable of order $2l$ and $m$ with $ m \leq 2l$, respectively (\cite{lagnese,Showalter,Showalterbook,ST}). 
	More precisely, 
	we  consider the following two problems
\begin{equation}\label{eq:BZK}
	\left|   
		\begin{array}{lcl}
			y_t  - \Delta y_t  - \Delta y   = v\chi_{\mathcal{O}} 	&  \mbox{in}&	Q,		\\
			\noalign{\smallskip}\dis
			y = 0 							& \mbox{on}&	\Sigma,	\\
			\noalign{\smallskip}\dis
			y(\cdot,0) =y_0 							& \mbox{in}&	\Omega
		\end{array}
	\right. 
\end{equation}
and 
\begin{equation}\label{eq:BBM}
	\left |   
		\begin{array}{lcl}
			y_t  -  \Delta y_{t} + \diver(A(x,t)y)   = v\chi_{\mathcal{O}} 	&  \mbox{in}&	Q,  \\
			\noalign{\smallskip}\dis
			y = 0 							& \mbox{on}&	\Sigma, \\
			\noalign{\smallskip}\dis
			y(\cdot, 0) = y_0						& \mbox{in}&	\Omega,
		\end{array}
	\right. 
\end{equation}
where $A=(a_1,\ldots,a_N)$ is a given vector field and $\chi_\mathcal{O}\in C^\infty(Q)$ with $\text{supp}\, \chi_\mathcal{0} \subset \overline{\mathcal{O}}$. 

Our goal in this paper  is to investigate the {\it null controllability problem}:
	\begin{quote}{\it
	given $T>0$ and $y_0\in H^1_0(\Om)$ find a control $v\in L^2(\mathcal{O})$ such that the associated solution of \eqref{eq:BZK} $($resp. \eqref{eq:BBM}$)$ satisfies:
	$$
		y(\cdot,T)=0, \quad \text{in}\quad \Om.
	$$}
	\end{quote}

 
	Equations such as \eqref{generalSobGal} are a particular case of the so called equations of {\it Sobolev-Galpern type}, see \cite{galpern2,sobolev}. 
	These type of equations appear for instance in the study of problems associated with the flow of certain viscous 
	fluids, in the  theory of seepage of homogeneous liquids in fissured rocks, see \cite{bzk}, and surface waves of long
wavelength in liquids, acoustic-gravity waves in compressible fluids, hydromagnetic
waves in cold plasma, acoustic waves in anharmonic crystals, see \cite{bbm}. In particular, equations \eqref{eq:BZK}  and \eqref{eq:BBM} are know as the {\it Barenblatt-Zheltov-Kochina} equation and the multidimensional  Benjamin-Bona-Mahony equation, respectively (see for instance \cite{muitobom,goldstein,bzk,bbm,Milne}).

	Regarding controllability for equations \eqref{eq:BZK} and \eqref{eq:BBM}, as far as we know, the only results available in the literature
	were obtained in the one-dimensional setting. Indeed, in \cite{Micu} it is proved that equation \eqref{eq:BBM}, with $A$ being a constant,
	cannot be steered to zero if $\om \subsetneqq \Om$ is a proper subset. However, the proof given in \cite{Micu} can be only performed in the $1d$ setting, 
	since it relies on the moment method. For a positive controllability result for \eqref{eq:BZK}, we cite \cite{tao_gao_yao}, where the authors consider the problem 
	on the torus and an prove that if one make the control to move in time, in order to cover the whole domain, it is possible to drive the solution to zero. Also related 
	to the controllability of \eqref{eq:BBM}, we cite \cite{yamamoto,zuazua_zhang}, where the unique continuation property is studied.
	
	In this paper, we analyze the null controllability of equations \eqref{eq:BZK} and \eqref{eq:BBM} in the multi-dimensional setting. 
	First, we show that both equations  \eqref{eq:BZK} and \eqref{eq:BBM} cannot be steered to zero if the control is fixed and localized in a proper open subset of $\Omega$. More precisely, 	we prove the following negative results.
\begin{theorem}\label{lacknullBZK}
	Let $T>0$ and $\om \subsetneqq \Om$ be a fixed open set. If $\mathcal{O}=\omega\times (0,T)$ then system \eqref{eq:BZK} is not null controllable at time $T$,
	i.e., there exists $y_0\in H^2(\Omega) \times H^1_0(\Omega)$ such that the null controllability of system \eqref{eq:BZK} fails.
	
\end{theorem}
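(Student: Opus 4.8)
The plan is to recast \eqref{eq:BZK} as a first-order abstract evolution equation and to read off the failure of controllability from the spectral structure of its generator. Since, for $\partial\Om$ regular enough, $(I-\Delta)$ is an isomorphism from $H^1_0(\Om)$ onto $H^{-1}(\Om)$ and from $H^2(\Om)\cap H^1_0(\Om)$ onto $L^2(\Om)$, I would apply $(I-\Delta)^{-1}$ to \eqref{eq:BZK} and write it as $y_t=Ay+Bv$, where
\be
A:=-I+(I-\Delta)^{-1}=:-I+K,\qquad Bv:=(I-\Delta)^{-1}(v\chi_{\mathcal O}).
\ee
The decisive structural remark is that $K:=(I-\Delta)^{-1}$ is a \emph{compact}, self-adjoint, positive operator on $L^2(\Om)$ with $\|K\|\le1$; hence $A$ is a \emph{bounded} self-adjoint operator (agreeing with $(I-\Delta)^{-1}\Delta$ on $H^2\cap H^1_0$), and $e^{tA}=e^{-t}e^{tK}$ is boundedly invertible on $L^2(\Om)$ for every $t$, its eigenvalues $e^{-t\lambda_k/(1+\lambda_k)}$ accumulating at $e^{-t}\neq0$. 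This accumulation, absent in the parabolic case, is the source of the obstruction.

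Next I would translate null controllability into a surjectivity statement. Writing $y(T)=e^{TA}y_0+L_Tv$ with $L_Tv:=\int_0^T e^{(T-s)A}Bv(s)\,ds$ and using that $e^{(T-s)A}$ commutes with $K$, one has $L_Tv=(I-\Delta)^{-1}\Lambda_Tv$, where $\Lambda_Tv:=\int_0^T e^{(T-s)A}(v\chi_{\mathcal O})\,ds$. Since $e^{TA}$ and $(I-\Delta)$ are invertible on the relevant scales, the datum $y_0$ can be steered to zero if and only if $-(I-\Delta)e^{TA}y_0\in\mathrm{Range}\,\Lambda_T$; and as $y_0$ runs over $H^2(\Om)\cap H^1_0(\Om)$ the left-hand side runs over all of $L^2(\Om)$. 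Thus \eqref{eq:BZK} is null controllable from $H^2\cap H^1_0$ precisely when $\Lambda_T\colon L^2(\mathcal O)\to L^2(\Om)$ is onto, equivalently when the dual observability inequality
\be
\int_0^T\!\!\int_{\om}|e^{(T-t)A}w|^2\,dx\,dt\ \ge\ c\,\|w\|_{L^2(\Om)}^2,\qquad w\in L^2(\Om),
\ee
holds for some $c>0$ (here $e^{(T-t)A}w$ is exactly the solution of the homogeneous adjoint equation, which is again of type \eqref{eq:BZK}).

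The heart of the matter is to disprove \eqref{eq:BZK}'s observability inequality for \emph{every} proper open subset $\om\subsetneqq\Om$. Since $\om$ is proper I can fix a ball $B\subset\subset\Om\setminus\overline{\om}$, a nonzero $a\in C_c^\infty(B)$, and consider the unit-norm high-frequency sequence $w_k:=\|e^{ik\cdot x}a\|_{L^2}^{-1}\,e^{ik\cdot x}a$ with $|k|\to\infty$. Then $w_k\rightharpoonup 0$ weakly in $L^2(\Om)$, so compactness of $K$ gives $\|Kw_k\|_{L^2}\to0$; writing $e^{(T-t)A}w_k=e^{-(T-t)}\big(w_k+(e^{(T-t)K}-I)w_k\big)$ and using that $w_k$ vanishes identically on $\om$, I obtain, uniformly in $t\in[0,T]$,
\be
\int_{\om}|e^{(T-t)A}w_k|^2\,dx\ \le\ e^{-2(T-t)}\big\|(e^{(T-t)K}-I)w_k\big\|_{L^2}^2\ \le\ C\,\|Kw_k\|_{L^2}^2\ \longrightarrow\ 0,
\ee
where the middle bound uses $\|(e^{sK}-I)w\|\le s\,e^{s}\|Kw\|$ together with $\|K\|\le1$. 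Integrating in time makes the left-hand side of the observability inequality tend to $0$ while $\|w_k\|_{L^2}=1$, so no constant $c>0$ can exist.

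Consequently $\Lambda_T$ is not onto, and any $w_0\in L^2(\Om)\setminus\mathrm{Range}\,\Lambda_T$ yields, through $y_0:=-e^{-TA}(I-\Delta)^{-1}w_0\in H^2(\Om)\cap H^1_0(\Om)$, an initial datum for which $-(I-\Delta)e^{TA}y_0=w_0\notin\mathrm{Range}\,\Lambda_T$, i.e.\ a (even smooth) state that cannot be driven to zero; this is exactly the assertion. I expect the only genuinely delicate step to be the functional-analytic bookkeeping of the reduction—the identity $L_T=(I-\Delta)^{-1}\Lambda_T$, the invertibility of $e^{TA}$ on each scale, and the equivalence ``$\Lambda_T$ onto $\Leftrightarrow\ \Lambda_T^*$ bounded below''. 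The analytic input itself is elementary, and this is precisely because the flow $e^{tA}$ carries \emph{no transport}: all spatial spreading is produced by the small smoothing operator $K$, so a weakly null sequence concentrated off $\om$ stays invisible to the observation. (For the variable-coefficient problem \eqref{eq:BBM} this transport is present and a sharper Gaussian-beam construction would be required.)
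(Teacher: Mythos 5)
Your argument is correct, and it reaches the conclusion by a genuinely different route from the paper. You reduce \eqref{eq:BZK} to the bounded evolution $y_t=(-I+K)y+K(v\chi_{\mathcal O})$ with $K=(I-\Delta)^{-1}$ compact and self-adjoint, observe that $e^{tA}=e^{-t}e^{tK}$ is invertible and equals $e^{-t}(I+\text{compact})$, and then kill the observability inequality with \emph{any} unit-norm sequence that is supported in a ball off $\overline\omega$ and converges weakly to zero: compactness forces $\|Kw_k\|\to0$, hence $\|(e^{sK}-I)w_k\|\to0$ uniformly in $s\in[0,T]$, and on $\omega$ only this vanishing part of the solution is seen. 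All the functional-analytic bookkeeping you flag as delicate does check out: $K\Lambda_T=L_T$ because $K$ commutes with $e^{(T-s)A}$; $e^{\pm TA}$ preserves $H^2(\Omega)\cap H^1_0(\Omega)$ because $e^{\pm TK}-I$ factors through $K$; and ``$\Lambda_T$ onto $\Rightarrow \Lambda_T^*$ bounded below'' is the closed range theorem (only this implication is needed, in its contrapositive). The paper instead proves the quantitative Proposition \ref{Theo:main}: it builds explicit concentrated quasi-solutions of the adjoint system \eqref{XF1} on $\mathbb{R}^N$ by Fourier transform, with data \eqref{Fou1} localized near $x_0\notin\overline\omega$ in space and near $\overline\xi/\epsilon$ in frequency, proves the two-sided bounds of Claims \ref{C:C1}--\ref{l:3} by nonstationary phase, and then corrects the boundary trace on $\partial\Omega$ by an auxiliary solution $\psi^\star_\eps$. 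What your soft argument buys is brevity and transparency about the mechanism (the spectrum of $A$ accumulates at $-1$, so infinitely many modes are equally slow and a weakly-null package of them stays invisible off its support); what the paper's construction buys is a rate ($O(\epsilon^{k-N/4})$ versus a uniform lower bound), localization around an arbitrary point and direction, and a template that survives the loss of Fourier diagonalization -- the same gaussian-beam philosophy, implemented by WKB expansion, is what the authors use for the variable-coefficient equation \eqref{eq:BBM}, where your remark correctly anticipates that a sharper construction is needed. Two cosmetic points: since the paper's $\chi_{\mathcal O}$ is a smooth cutoff supported in $\overline\omega\times[0,T]$ rather than an indicator, your final estimate should carry a factor $\|\chi_{\mathcal O}\|_\infty^2$, which changes nothing; and the operator inequality you invoke is cleanest in the form $e^{sK}-I=\bigl(\sum_{n\ge1}s^nK^{n-1}/n!\bigr)K$, giving $\|(e^{sK}-I)w\|\le(e^s-1)\|Kw\|$.
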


\begin{theorem}\label{lacknull_BBM}
	Let $T>0$, $\om \subsetneqq \Om$ be a fixed open set and $A\in C^\infty(\overline Q)$. If $\mathcal{O}=\omega\times (0,T)$ then  system \eqref {eq:BBM} is not null controllable at time $T$, i.e., there exists $y_0\in H^2(\Omega) \times H^1_0(\Omega)$ for which the null controllability of \eqref{eq:BBM} does not hold.
\end{theorem}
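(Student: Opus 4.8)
The plan is to argue by duality: via the Hilbert Uniqueness Method, the null controllability of \eqref{eq:BBM} at time $T$ is equivalent to an observability inequality for the adjoint system, and I would prove Theorem \ref{lacknull_BBM} by constructing a family of adjoint solutions that violates this inequality. Writing $M=(I-\Delta)$ (with Dirichlet conditions, so that $M$ is self-adjoint, positive and invertible), equation \eqref{eq:BBM} reads $My_t+\diver(Ay)=v\chi_{\mathcal O}$, and a formal duality computation (integrating by parts in $x$ and $t$, using that $y,\varphi\in H^1_0$) identifies the adjoint system as
\begin{equation}\label{adjBBM}
\left|
\begin{array}{lcl}
-(I-\Delta)\varphi_t-A\cdot\nabla\varphi=0 & \text{in} & Q,\\
\noalign{\smallskip}\varphi=0 & \text{on} & \Sigma,\\
\noalign{\smallskip}\varphi(\cdot,T)=\varphi_T & \text{in} & \Om,
\end{array}
\right.
\end{equation}
equivalently $\varphi_t=-(I-\Delta)^{-1}(A\cdot\nabla\varphi)$. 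Since the generator is a bounded (indeed compact) perturbation acting on $L^2(\Om)$ and $H^1_0(\Om)$, the adjoint evolution is well posed, $S(T,0)$ is an isomorphism, and null controllability with $L^2$ controls and $H^1_0$ data is equivalent to the existence of $C>0$ with
\begin{equation}\label{obsBBM}
\|\varphi(\cdot,0)\|_{H^1_0(\Om)}^2\le C\int_0^T\!\!\int_{\om}|\varphi|^2\,dx\,dt
\end{equation}
for every solution $\varphi$ of \eqref{adjBBM}. My goal is then to show \eqref{obsBBM} cannot hold.

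The key structural observation driving the construction is that the operator $B(t):=-(I-\Delta)^{-1}(A(\cdot,t)\cdot\nabla\,\cdot\,)$ is of order $-1$: the differentiation $A\cdot\nabla$ raises the order by one while $(I-\Delta)^{-1}$ lowers it by two. Consequently $B(t)$ is \emph{smoothing}, and on data oscillating at frequency $\sim 1/h$ it has operator norm $O(h)$; heuristically the adjoint evolution freezes high-frequency data. Exploiting this, I would fix $x_0\in\Om\setminus\overline{\om}$, choose $\chi\in C^\infty_c(\Om\setminus\overline{\om})$ supported near $x_0$ with $\|\chi\|_{L^2}=1$, fix $\xi_0\neq0$, and take the highly oscillatory wave packet (a Gaussian-beam–type datum)
\begin{equation}\label{packet}
\varphi_h(\cdot,T)=e^{\,i\xi_0\cdot x/h}\,\chi(x),\qquad 0<h\ll1,
\end{equation}
which lies in $H^1_0(\Om)$, vanishes on $\om$, and satisfies $\|\varphi_h(\cdot,T)\|_{L^2}=1$, $\|\varphi_h(\cdot,T)\|_{H^1_0}\sim 1/h$. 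Let $\varphi_h$ be the corresponding solution of \eqref{adjBBM}.

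The heart of the argument is the uniform-in-$h$ estimate $\|\varphi_h(\cdot,t)-\varphi_h(\cdot,T)\|_{L^2(\Om)}=O(h)$ on $[0,T]$. I would obtain it from Duhamel's formula: writing $w=\varphi_h-\varphi_h(\cdot,T)$, one has $w_t=B(t)w+B(t)\varphi_h(\cdot,T)$ with $w(\cdot,T)=0$. The source is explicit, and a direct elliptic computation on \eqref{packet}, using $(I-\Delta)\big(e^{i\xi_0\cdot x/h}b\big)=\tfrac{|\xi_0|^2}{h^2}e^{i\xi_0\cdot x/h}(b+O(h))$ for smooth compactly supported $b$, yields $\|B(t)\varphi_h(\cdot,T)\|_{L^2}=O(h)$ uniformly in $t$. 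Since $\|B(t)\|_{L^2\to L^2}\le C$, Gronwall's inequality closes the bound $\|w(\cdot,t)\|_{L^2}=O(h)$. Because $\varphi_h(\cdot,T)\equiv0$ on $\om$, this forces $\int_0^T\!\int_\om|\varphi_h|^2=\int_0^T\!\int_\om|w|^2=O(h^2)\to0$, while $\|\varphi_h(\cdot,0)\|_{H^1_0}\ge\|\varphi_h(\cdot,T)\|_{H^1_0}-O(1)\gtrsim 1/h$ (alternatively $\|\varphi_h(\cdot,0)\|_{L^2}\ge \tfrac12$). Plugging these into \eqref{obsBBM} makes its left-hand side stay bounded below while its right-hand side tends to $0$, contradicting the existence of any finite $C$; hence \eqref{obsBBM} fails and \eqref{eq:BBM} is not null controllable.

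I expect the main obstacle to be twofold. First, making the evolution estimate genuinely uniform in $h$ with the \emph{variable} coefficient $A\in C^\infty(\overline Q)$ and the \emph{nonlocal} operator $(I-\Delta)^{-1}$: the commutator between $A\cdot\nabla$ and the Fourier-side localization must be controlled so that the $O(h)$ gain survives on $\om$ for all $t\in[0,T]$, which is precisely where a careful Gaussian-beam/WKB bookkeeping (amplitude–phase expansion and stationary-phase bounds) provides the cleanest rigor. Second, upgrading the failure of \eqref{obsBBM} into the \emph{explicit} non-controllable datum $y_0\in H^2(\Om)\cap H^1_0(\Om)$ asserted in Theorem \ref{lacknull_BBM}: here I would represent the obstruction functional built from the family $\{\varphi_h\}$ by an $H^2\cap H^1_0$ element and verify, as in the argument for Theorem \ref{lacknullBZK}, that it cannot be steered to zero, the transport term $\diver(Ay)$ being harmless precisely because it only contributes through the order $-1$ operator $B(t)$.
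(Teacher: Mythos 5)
Your strategy coincides with the paper's at the top level --- reduce Theorem \ref{lacknull_BBM} to the failure of the observability inequality \eqref{obs:BBM} for the adjoint equation $-\psi_t+\Delta\psi_t-A\cdot\nabla\psi=0$, and violate it with a high-frequency packet concentrated at a point $x_0\in\Om\setminus\overline{\om}$ --- but the technical route is genuinely different. The paper builds an explicit two-term WKB quasi-solution $\psi_h=e^{i\alpha/h}(f_0+hf_1+h^2f_2)$ with the complex phase $\alpha=x\cdot\xi_0+i|x-x_0|^2/2$, where $f_1,f_2$ solve transport-type relations chosen to cancel the first orders of the residual; the remaining residual $R$ is then absorbed by solving the adjoint equation exactly with source $-R$ and zero data and invoking energy estimates. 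You instead keep a crude datum $e^{i\xi_0\cdot x/h}\chi$ and push everything into the operator-theoretic observation that $B(t)=-(I-\Delta)^{-1}(A(\cdot,t)\cdot\nabla)$ is bounded on $L^2$ and of size $O(h)$ on the packet, closing with Duhamel and Gronwall. Your version is more elementary (no correctors, no complex phase) and makes the mechanism --- the generator is smoothing, so high-frequency data are essentially frozen and never reach $\om$ --- more transparent; the paper's version carries the explicit Gaussian normalization, but both yield ratios of local to global norms that tend to zero, which is all that is needed.

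One step of yours needs more than what you wrote. The identity $(I-\Delta)\bigl(e^{i\xi_0\cdot x/h}b\bigr)=\frac{|\xi_0|^2}{h^2}e^{i\xi_0\cdot x/h}\bigl(b+O(h)\bigr)$ only produces an approximate inverse of $I-\Delta$ on $A\cdot\nabla\varphi_h(\cdot,T)$ with an $O(1)$ residual in $L^2$; applying the (merely bounded) Dirichlet resolvent to that residual gives $O(1)$, not the $O(h)$ you claim for $B(t)\varphi_h(\cdot,T)$. To actually get $O(h)$ you must either carry a second-order amplitude correction --- which is precisely the role played by the paper's $f_1$ and $f_2$ --- or argue by duality: $\|(I-\Delta)^{-1}f\|_{L^2}=\sup_{\|g\|_{L^2}=1}|\langle f,(I-\Delta)^{-1}g\rangle|$, and two integrations by parts against the oscillation $e^{i\xi_0\cdot x/h}$ (legitimate because the amplitude is compactly supported, so there are no boundary terms, and $(I-\Delta)^{-1}g\in H^2(\Om)$ uniformly) gain the factor $h^2$ that beats the $1/h$ coming from $A\cdot\nabla$. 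With that point repaired the argument is complete; your closing concern about exhibiting an explicit $y_0$ is not a real obstacle, since the failure of the observability inequality already negates null controllability for some datum by the standard duality, which is also all the paper establishes.
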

It is worth to mention that Theorems \ref{lacknullBZK} and \ref{lacknull_BBM} are closely related to the fact that the principal part of \eqref{eq:BZK} and \eqref{eq:BBM}, given by $\partial_t\Delta$, has vertical characteristic hyperplanes which makes impossible to recover any information localized along these characteristics (see Section \ref{sec:neg}). In fact, the proof of both results relies on the construction of highly localized solutions (gaussian beams). For Theorem \ref{lacknullBZK} we construct such solutions by means of  Fourier transform (for similar constructions see \cite{Macia, Debayan}). On the other hand, since the vector field $A$ in equation \eqref{eq:BBM} depends on both the space and time variables,  we can not use Fourier Transform to prove Theorem \ref{lacknull_BBM}. Therefore, we will use a different approach based on  asymptotic expansion of solutions.

	The second main part of this paper is devoted to obtain positive null controllability results for equations \eqref{eq:BZK} and \eqref{eq:BBM}. In fact, since 
	the main obstruction to the null controllability with localized fixed controls is the existence of concentrated solutions, we ask the control to move so that
	we can see the information that would be lost otherwise, i.e., we make the control to move in time in order to cover the whole space domain.

	Before stating the main positive results, we give the precise definition on the movement of the controls.  	
	We take the control domain determined by the evolution of a given reference subset 
	through a flow $X:\mathbb{R}^N\times [0,T]\times [0,T]\to \mathbb{R}^N$, which is generated by some vector field 
	$F \in C([0,T]; W^{2,\infty}(\mathbb{R}^N;\mathbb{R}^N))$, i.e. $X$ solves
\be\label{Xflow}
	\left\{ 
		\begin{array}{l}
		\displaystyle \frac{\partial X}{\partial t} (x , t , t_0)= F ( X ( x,t,t_0),t),\\[3mm]
		X(x, t_0 , t_0)=x. 
		\end{array}
	\right.
\ee
	We make following geometric requirements:
\begin{assumption}\label{AssumptionMC}
	There exist a smooth bounded domain $\omega _0\subset \R ^N$, a curve $\Gamma \in C^\infty ([0,T];\R ^N)$, 
	and two times $t_1,t_2$ with $0\le t_1 < t_2 \le T $ such that: 
\ba
	&& \Gamma (t) \in X(\omega _0,t,0)\cap \Omega , \quad \forall t\in [0,T]; \label{A3a} \\[3mm]
	&&\overline{\Omega} \subset \displaystyle\bigcup\limits_{t\in [0,T]} X(\omega _0,t,0) =\{ X(x,t,0);\ \ (x,t)\in \omega _0\times [0,T]\} ; \label{A3b} \\[3mm]
	&&\Omega \setminus \overline{X(\omega _0,t,0) } \text{ is nonempty and connected for } t\in [0,t_1]\cup [t_2,T]; \label{A3c}\\ [3mm]
	&&\Omega \setminus \overline{X(\omega _0,t,0) } \text{ has two (nonempty) connected components  for } t\in (t_1,t_2); \label{A3d}\\[3mm]
	&&\forall \theta\in C([0,T]; \Omega ) , \ \exists t\in [0,T],\quad \theta (t) \in X(\omega _0,t,0), \label{A3e}
\ea
	where the flow $X$ is  generated by an admisible velocity field $F\in C([0,T]; W^{2,\infty}(\mathbb{R}^N;\mathbb{R}^N))$.
\end{assumption}
  
\begin{definition}[Moving control region]\label{def:O}  
 	A moving control region is defined as $\mathcal{O}_\om=\bigcup_{t\in[0,T]}\left[X(\omega,t,0)\cap \Omega\right]\times\{t\}$ and for any $t>0$ a time section is defined as
	$\mathcal{O}_\om(t)=X(\omega,t,0)\cap \Omega$.
\end{definition}
	We will prove the following positive controllability results.
  
  \begin{theorem}\label{teo_pos_bzk}
	Suppose that Assumption \ref{AssumptionMC} holds and let $T>0$, $\om\subset\mathbb{R}^N$, with $\overline{\om}_0\subset \om$.
	Then, for any $y_0 \in H^2(\Omega) \cap H^1_0(\Omega)$ there exists a moving control $v\in L^2(\mathcal{O}_\om)$ 
	such that the associated solution to \eqref {eq:BZK} satisfies 
 $$
	 y(\cdot,T) =0\quad \hbox{in}\quad \Omega.
 $$
 \end{theorem}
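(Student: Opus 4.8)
The plan is to treat \eqref{eq:BZK} as an abstract evolution equation and reduce null controllability to an observability estimate for the adjoint, proved by combining the special spectral structure of the operator with the sweeping property \eqref{A3b} of the moving support. Since the Dirichlet realization of $I-\Delta$ is an isomorphism, I would rewrite \eqref{eq:BZK} as $y_t=\mathcal{A}y+\mathcal{B}v$, with $\mathcal{A}:=-I+(I-\Delta)^{-1}$ and $\mathcal{B}v:=(I-\Delta)^{-1}(v\chi_{\mathcal O})$. The operator $\mathcal{A}$ is bounded on $X:=H^2(\Om)\cap H^1_0(\Om)$ (and on $H^1_0(\Om)$, $L^2(\Om)$), so it generates a $C_0$-group; in particular the homogeneous flow is real-analytic in time, a fact that will be crucial. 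Well-posedness for $y_0\in X$, $v\in L^2(\mathcal{O}_\om)$ is then immediate and $y(\cdot,T)\in X$ makes sense. By the classical duality for linear systems with a bounded control operator, null controllability at time $T$ is equivalent to the observability inequality
\be
  \|\varphi(\cdot,0)\|_{X'}^2 \le C\iint_{\mathcal{O}_\om}|\varphi|^2\,dx\,dt
\ee
for solutions of the adjoint system $(I-\Delta)\varphi_t+\Delta\varphi=0$ in $Q$, $\varphi=0$ on $\Sigma$, $\varphi(\cdot,T)=\varphi_T$ (run backward), the left norm and the precise weight on the right reflecting the smoothing $\mathcal{B}^\ast=\chi_{\mathcal O}(I-\Delta)^{-1}$. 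Establishing this inequality is the whole matter.

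Next I would record two preliminary facts. First, a covering lemma: by \eqref{A3b}, continuity of the flow $X$ and compactness of $\overline\Om$, the set $\mathcal{O}_\om$ is open and every $x\in\Om$ lies in $\mathcal{O}_\om(t)$ for a time-set of measure at least some $\tau_0>0$, i.e. $\int_0^T\carac_{\mathcal{O}_\om(t)}(x)\,dt\ge\tau_0$ a.e. in $\Om$. Second, a unique continuation principle: if $\varphi\equiv0$ on $\mathcal{O}_\om$ then $\varphi\equiv0$. For the constant-coefficient equation \eqref{eq:BZK} this follows from time-analyticity: for data regular enough (then by density) $t\mapsto\varphi(x,t)$ is analytic for each fixed $x$, and by the covering lemma it vanishes on a time-set with nonempty interior, hence for all $t$; as $x$ is arbitrary, $\varphi\equiv0$. (The geometric conditions \eqref{A3a}, \eqref{A3c}--\eqref{A3e} furnish an alternative propagation of zeros that survives variable coefficients, as needed for \eqref{eq:BBM}.)

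The heart of the proof is the quantitative estimate, and here the spectral picture dictates the argument. Diagonalizing in the Dirichlet eigenbasis $\{e_j\}$ with eigenvalues $\lambda_j$, the adjoint solution reads $\varphi(x,t)=\sum_j c_j e^{\nu_j(t-T)}e_j(x)$ with $\nu_j=\lambda_j/(1+\lambda_j)\to1$. The rates accumulate at $1$, so there is no spectral gap, Ingham/moment methods are unavailable, and $1$ lies in the essential spectrum of the generator. This accumulation is precisely the gaussian-beam obstruction of Section \ref{sec:neg}: for high frequencies $\varphi$ behaves like the frozen profile $e^{(t-T)}\sum_{j\ \mathrm{large}}c_j e_j(x)$, a spatial shape merely rescaled in time, invisible to a fixed control but caught by the sweeping one. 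Indeed, by the covering lemma $\iint_{\mathcal{O}_\om}|g|^2\ge\tau_0\|g\|_{L^2}^2$ for any profile $g$, so the full $L^2$-mass of the high-frequency part is observed, up to errors controlled by $1-\nu_j=O(\lambda_j^{-1})$. I would turn this into observability by a compactness--uniqueness scheme: splitting $\varphi$ into a finite-dimensional low-frequency part and the high-frequency tail, the covering lemma yields observability of the tail with a remainder compact relative to $X'$, while the unique continuation principle rules out any nontrivial element of the finite-dimensional kernel; absorbing the compact remainder gives the inequality.

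Finally, the observability inequality together with HUM produces, for every $y_0\in X$, a control $v\in L^2(\mathcal{O}_\om)$ steering \eqref{eq:BZK} to rest; the freedom $\overline\om_0\subset\om$ is used to fit the smooth cutoff $\chi_{\mathcal O}$, equal to $1$ over the reference region and supported in $\mathcal{O}_\om$, so that the HUM control is genuinely supported in the moving set. The main obstacle is exactly the quantitative step: the absence of a spectral gap means the estimate cannot rest on any separation of frequencies, and one must instead exploit the geometric sweeping to see the frozen high-frequency beams, while ensuring that the compactness--uniqueness argument closes despite the conservative, non-smoothing nature of the dynamics.
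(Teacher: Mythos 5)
Your argument is essentially correct, but it takes a genuinely different route from the paper. The paper reduces \eqref{eq:BZK} to the coupled elliptic--ODE system \eqref{system:BZK1} (equivalently, it works in the variable $z=(I-\Delta)y$, for which the control operator is a plain multiplication) and establishes the observability inequality \eqref{adj:BZK_obs} through a global Carleman estimate (Theorem \ref{thm:Carl_BZK}), obtained by combining the moving-control Carleman inequality for the ODE part (Lemma \ref{lemma:ode}) with a Carleman inequality for the Laplacian carrying the same moving weight (Lemma \ref{lemma:elliptic}), and then eliminating the local term in $\varphi$ by integrations by parts with a cut-off transported by the flow. You instead diagonalize in the Dirichlet eigenbasis, write $\varphi(\cdot,t)=e^{-(T-t)}\varphi_T+\rho(\cdot,t)$ with $\|\rho(\cdot,t)\|_{L^2}\le CT\|\varphi_T\|_{(H^2(\Om)\cap H^1_0(\Om))'}$ because $1-\nu_j=(1+\lambda_j)^{-1}$, observe the frozen profile through the covering estimate $\int_0^T\carac_{\mathcal{O}_\om(t)}(x)\,dt\ge\tau_0$ (a consequence of \eqref{A3b}, openness of the swept sets and compactness of $\overline\Om$), and close by compactness--uniqueness, the uniqueness step following from analyticity of $t\mapsto\langle\varphi(\cdot,t),\psi\rangle$ for test functions $\psi$ localized where the covering lemma applies. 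This is sound and noticeably lighter: it uses only the sweeping condition \eqref{A3b} rather than the full Assumption \ref{AssumptionMC}, and it dispenses with Carleman machinery altogether. What it gives up is quantitativeness (no explicit observability constant, hence no estimate of the cost of control) and, more importantly, robustness: it hinges on constant coefficients and the resulting spectral decomposition, so it does not transfer to the variable-coefficient Benjamin--Bona--Mahony case (Theorem \ref{teo_pos_bbm}), which is precisely where the paper's Carleman approach pays off. One point you should tighten: with your choice $\mathcal{B}v=(I-\Delta)^{-1}(v\chi_{\mathcal O})$ the adjoint observation is $\chi_{\mathcal O}(I-\Delta)^{-1}\varphi$, not $\chi_{\mathcal O}\varphi$, and the inequality you state does not match this pairing; passing to $z=(I-\Delta)y$ as in \eqref{system:BZK1} makes the control operator a bounded, non-smoothing multiplication, so that the required inequality is exactly $\|\psi(\cdot,0)\|^2_{L^2(\Omega)}\le C\iint_{\mathcal{O}_\om}|\psi|^2\,dxdt$ for the adjoint variable $\psi$ and your spectral argument then applies verbatim.
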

 
 \begin{theorem}\label{teo_pos_bbm}
	Suppose that Assumption \ref{AssumptionMC} holds and let $T>0$, $\om\subset\mathbb{R}^N$, with $\overline{\om}_0\subset \om$.
	Then, for any $y_0 \in H^2(\Omega) \cap H^1_0(\Omega)$ and 
	any $T>0$, there exists a moving control $v\in L^2(\mathcal{O}_\om)$ such that the associated solution to \eqref {eq:BZK} satisfies 
 $$
	 y(\cdot,T) =0\quad \hbox{in}\quad \Omega.
 $$
 \end{theorem}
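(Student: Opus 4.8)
The plan is to follow the duality scheme of Theorem \ref{teo_pos_bzk}, adapting it to the transport term $\diver(A(x,t)y)$. By the Hilbert Uniqueness Method, the null controllability of \eqref{eq:BBM} with control supported in $\mathcal{O}_\om$ is equivalent to an observability inequality for the adjoint system. Writing the equation as $(I-\Delta)y_t + \diver(Ay) = v\chi_{\mathcal{O}}$ and integrating by parts in space and time (using $y|_\Sigma=0$, so $y_t|_\Sigma=0$), one checks that the adjoint equation is
\begin{equation}\label{adjointBBM}
\varphi_t - \Delta\varphi_t + A\cdot\nabla\varphi = 0 \ \text{ in } Q, \qquad \varphi = 0 \ \text{ on } \Sigma, \qquad \varphi(\cdot,T)=\varphi_T,
\end{equation}
run backward in time, and that $y(\cdot,T)=0$ can be reached from every $y_0\in H^2(\Om)\cap H^1_0(\Om)$ precisely when
\begin{equation}\label{obsBBM}
\|\varphi(\cdot,0)\|_{H^1_0(\Om)}^2 \le C \int_0^T\!\!\int_{\mathcal{O}_\om(t)} |\varphi(x,t)|^2\,dx\,dt
\end{equation}
holds for all $\varphi_T$, with $\mathcal{O}_\om(t)=X(\om,t,0)\cap\Om$ as in Definition \ref{def:O}. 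Thus everything reduces to \eqref{obsBBM}.

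\textbf{Energy comparison.}
The first ingredient is an energy identity for \eqref{adjointBBM}. Since $(I-\Delta)$ is boundedly invertible from $H^{-1}(\Om)$ onto $H^1_0(\Om)$, equation \eqref{adjointBBM} reads as the abstract evolution $\varphi_t = -(I-\Delta)^{-1}(A\cdot\nabla\varphi)$, whose generator is a \emph{bounded} operator on $V:=H^1_0(\Om)$: the map $\varphi\mapsto A\cdot\nabla\varphi$ sends $V$ into $L^2(\Om)$ (as $A\in C^\infty(\overline Q)$), and $(I-\Delta)^{-1}$ sends $L^2(\Om)$ back into $V$. Pairing \eqref{adjointBBM} with $\varphi$ in $V$ and using $\int_\Om (A\cdot\nabla\varphi)\varphi = -\frac12\int_\Om (\diver A)\varphi^2$, I obtain $\frac{d}{dt}\|\varphi(t)\|_V^2 \le \|\diver A\|_{L^\infty(Q)}\,\|\varphi(t)\|_V^2$, so that by Gronwall the energy is comparable at all times,
\[
c\,\|\varphi(s)\|_V^2 \le \|\varphi(t)\|_V^2 \le C\,\|\varphi(s)\|_V^2, \qquad s,t\in[0,T],
\]
with constants depending only on $T$ and $\|\diver A\|_{L^\infty(Q)}$. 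This reduces \eqref{obsBBM} to bounding the time-integrated energy $\int_0^T\|\varphi(t)\|_V^2\,dt$ by the right-hand side of \eqref{obsBBM}; that is, to seeing the full $V$-norm through the moving window $\mathcal{O}_\om(t)$.

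\textbf{The geometric mechanism.}
This is where Assumption \ref{AssumptionMC} enters. Its last requirement \eqref{A3e} says that every continuous trajectory $\theta\in C([0,T];\Om)$ meets the moving set at some time; applied to the constant trajectories $\theta\equiv x_0$, and since $\overline{\om}_0\subset \om$, it guarantees that for each $x_0\in\Om$ there is a time $t_{x_0}$ with $x_0\in\mathcal{O}_\om(t_{x_0})$. In other words, the moving control sweeps across all the vertical characteristics $\{x_0\}\times[0,T]$ of the principal part $\p_t\Delta$, which are exactly the directions along which the concentrated (gaussian beam) solutions of Section \ref{sec:neg} live. I would exploit this geometric covering as in the Barenblatt--Zheltov--Kochina case (Theorem \ref{teo_pos_bzk}): build a cut-off/weight subordinated to the flow $X$ that is admissible on all of $\overline{Q}$ thanks to \eqref{A3b}--\eqref{A3e}, and run a global weighted energy (Carleman-type) estimate for the operator $\p_t(I-\Delta)+A\cdot\nabla$, together with the unique continuation property for \eqref{adjointBBM} from \cite{yamamoto,zuazua_zhang}, so as to absorb the interior terms and recover \eqref{obsBBM}.

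\textbf{Main obstacle.}
The main obstacle --- and the only genuinely new point compared with \eqref{eq:BZK} --- is the variable, space--time dependent field $A(x,t)$ in the transport term. For \eqref{eq:BZK} the analogous observability can be obtained by Fourier/spectral methods, because the operator has constant coefficients and is, up to time reversal, self-adjoint; here neither holds, so the evolution cannot be diagonalised and the transport term must be controlled directly. The delicate coupling between the \emph{nonlocal} smoothing $(I-\Delta)^{-1}$ and the \emph{local} transport $A\cdot\nabla$ is what makes the propagation estimate technical: one must check that the weight adapted to $X$ is compatible with $A$ (so that no uncontrolled trajectory escapes the moving window), and that the lower-order terms produced by commuting the weight past $(I-\Delta)^{-1}$ and $A\cdot\nabla$ can be absorbed using the energy comparison above and the connectedness conditions \eqref{A3c}--\eqref{A3d}. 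Turning the qualitative covering provided by \eqref{A3e} into the quantitative constant $C$ in \eqref{obsBBM}, uniformly in $\varphi_T$, is where the real work lies.
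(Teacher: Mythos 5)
Your reduction to an observability inequality for the adjoint equation $\varphi_t-\Delta\varphi_t+A\cdot\nabla\varphi=0$ and your identification of the geometric role of \eqref{A3e} are consistent with the paper, and the Gronwall energy comparison on $H^1_0(\Om)$ is correct (though note that, since $y_0\in H^2(\Om)\cap H^1_0(\Om)$ only forces $(I-\Delta)y_0\in L^2(\Om)$, the norm to be observed at $t=0$ is the $L^2$ norm, not the $H^1_0$ norm you wrote; asking for the latter makes the task strictly harder for no gain). The genuine gap is that the quantitative observability inequality --- which is the entire content of the theorem --- is only gestured at: you propose to ``run a global weighted energy (Carleman-type) estimate for $\partial_t(I-\Delta)+A\cdot\nabla$'' and to invoke unique continuation from \cite{yamamoto,zuazua_zhang}, and you yourself concede that turning the qualitative covering into the quantitative constant ``is where the real work lies.'' No Carleman estimate is available for the full operator $\partial_t(I-\Delta)+A\cdot\nabla$: its principal part $\partial_t\Delta$ has exactly the vertical characteristics that Section \ref{sec:neg} exploits to disprove observability for fixed control regions, and unique continuation only yields approximate controllability, never the constant $C$ in the observability inequality.

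The paper circumvents this through a structural decomposition that your proposal misses: the adjoint problem is split into the elliptic equation $\varphi-\Delta\varphi=A\cdot\nabla\psi$ coupled with the pointwise ODE $-\psi_t=\varphi$ (Proposition \ref{nullBBM_sys} and system \eqref{adj:BBM}). One then combines the Carleman inequality for the ODE part (Lemma \ref{lemma:ode}) with a new Carleman inequality for the Laplacian with $H^{-1}$ right-hand side (Lemma \ref{carleman:elliptic_H-1}) --- the $H^{-1}$ version is essential here because $A\cdot\nabla\psi=\nabla\cdot(A\psi)-\psi\nabla\cdot A$ is only controlled in $H^{-1}(\Om)$ by $\|\psi\|_{L^2(\Om)}$ --- and finally eliminates the local terms in $\varphi$ and $\nabla\varphi$ by integrating by parts in time against the flow-adapted cutoff $\zeta(x,t)=\vartheta(X(x,0,t))$, using $\varphi=-\psi_t$ together with an auxiliary elliptic equation satisfied by $\varphi_t$. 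None of these steps appears in your proposal, so as written it does not constitute a proof. (Also, your remark that the BZK case is handled ``by Fourier/spectral methods'' is inaccurate: the moving control region destroys translation invariance, and the paper's positive BZK result is likewise proved by Carleman estimates.)
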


It is worth mentioning that the idea of making the control to move in order make the system become controllable has been used for many different problems in the past few years, see \cite{CSZZ,Khapalov,KK,MRR}. Here we will use the approach introduced in  \cite{CSZR}, based on Carleman inequalities, which allows us to treat multi-dimensional problems.




\section{Negative Controllability Results}\label{sec:neg}

\subsection{Barenblatt-Zheltov-Kochina with fixed controls} 

We prove Theorem \ref{lacknullBZK}. Here we assume that $\mathcal{O}$ is of the form $\omega\times (0,T)$, where $\omega$ is a proper open subset of $\Omega$. 

For analyzing the controllability of \eqref{eq:BZK} we will make use of the following decomposition:
	\begin{equation}\label{system:BZK}
	\left |   
		\begin{array}{lcl}
			u  - \Delta u  = w			&\mbox{in}&	Q,		\\
			\noalign{\smallskip}\dis
			w_t + w  = u+ v\chi_{\mathcal{O}}	&\mbox{in}&	Q,		\\
			\noalign{\smallskip}\dis
			u = 0						&\mbox{on}&	\Sigma, 	\\
			\noalign{\smallskip}\dis
			w(\cdot,0) = u_0-\Delta u_0					&\mbox{in}&	\Omega.
		\end{array}
	\right. 
\end{equation}
	Indeed,  the solution of  equation \eqref{eq:BZK} satisfies $u(\cdot,T)=0$ if and only if 
	the solution of system \eqref{system:BZK} satisfies $w(\cdot,T)=0$.

	From  duality arguments, the null controllability for system \eqref{system:BZK} with control supported in $\omega\times (0,T)$ is equivalent to the existence of a constant $C>0$ such that the observability inequality 
\[
	\|\psi(\cdot,0)\|^2_{ L^2(\Omega)} \leq C \int^T_0\!\!\!\int_{\omega}|\psi|^2dxdt,
\]
	holds for all $\psi_T \in L^2(\Omega)$, where $\psi$, together with $\varphi$, is the solution of the adjoint system
\begin{equation}\label{adj:BZK}
	\left |   
		\begin{array}{lcl}
			\varphi - \Delta \varphi = \psi 	&\mbox{in}&	Q,  		\\
			\noalign{\smallskip}\dis
			- \psi_t + \psi =\varphi 	&\mbox{in}&	Q, 		\\
			\noalign{\smallskip}\dis
			\varphi= 0     			&\mbox{on}&	\Sigma, 	\\
			\noalign{\smallskip}\dis
			\psi(T) = \psi_T 		&\mbox{in}&	\Omega.
		\end{array}
	\right. 
\end{equation}

	Theorem \ref{lacknullBZK} is a direct consequence of the following proposition:

\begin{proposition}\label{Theo:main}
	Let $\om_0$ be an open subset of $\Om$ such that $\bar\om_0\subsetneq\Om$. Then, there exist $\eps_0>0$ and 
	$\psi_T^\eps\in L^2(\Om)$ such that for any  integer $k>N/4$ the corresponding solution of 
\begin{equation}\label{eq:adjointpseudo_teo}
	\left |   
		\begin{array}{lcl}
			\varphi^\eps  - \Delta \varphi^\eps = \psi^\eps 		&  \mbox{in}	&Q,		\\
			\noalign{\smallskip}\dis
			-\psi^\eps_t  +
			 \psi^\eps=  \varphi^\eps  		&  \mbox{in}	&Q,		\\
			\noalign{\smallskip}\dis
			\varphi^\eps = 0 										& \mbox{on}	&\Sigma,	\\
			\noalign{\smallskip}\dis
			\psi^\eps(\cdot,T)=\psi^\eps_T							&  \mbox{in}	&\Om
		\end{array}
	\right. 
\end{equation}	
	satisfies
\begin{equation}\label{eq:notobs}
	\| \psi^{\epsilon}(\cdot,0)\|^2_{L^2(\Om)} \geq C 
	\quad 
	\hbox{and}
	\quad
	\| \psi^{\epsilon}\|^2_{L^2(0,T;L^2(\om_0))} \leq C\epsilon^{k-N/4}\quad \forall \eps\in(0,\eps_0)
\end{equation}
	where $C$ is a positive constant independent of $\eps$.
\end{proposition}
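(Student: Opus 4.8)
The plan is to refute observability for \eqref{eq:adjointpseudo_teo} by exhibiting a family of highly concentrated solutions (gaussian beams) whose energy at $t=0$ stays of order one while their restriction to $\om_0$ becomes negligible. First I would decouple the adjoint system: eliminating $\varphi^\eps=(I-\Delta)^{-1}\psi^\eps$ (with the Dirichlet Laplacian) from $\varphi^\eps=\psi^\eps-\psi^\eps_t$ yields the scalar backward equation $(I-\Delta)\psi^\eps_t=-\Delta\psi^\eps$, i.e. $\psi^\eps_t=\big(I-(I-\Delta)^{-1}\big)\psi^\eps$. On $\R^N$ the Fourier transform diagonalizes this and gives the explicit propagator
\[
 \widehat{\psi^\eps}(\xi,t)=\widehat{\psi^\eps_T}(\xi)\,\mu_{T-t}(\xi),\qquad \mu_s(\xi):=\exp\Big(-\tfrac{|\xi|^2}{1+|\xi|^2}\,s\Big).
\]
The crucial structural facts are that $\mu_s$ is a \emph{real} multiplier with $e^{-s}\le \mu_s(\xi)\le 1$, and that $\mu_s(\xi)=e^{-s}\exp\!\big(s(1+|\xi|^2)^{-1}\big)$, so the solution operator equals $e^{-s}(I+S_s)$ where $S_s$ is a smoothing operator whose kernel $k_s$ is smooth off the diagonal and decays exponentially, uniformly for $s\in[0,T]$. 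In particular there is no genuine transport: mass placed near a point stays near that point, which is the analytic counterpart of the vertical characteristics mentioned in the introduction.

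The lower bound in \eqref{eq:notobs} is then the easy half. Since $\mu_T(\xi)\ge e^{-T}$ for every $\xi$, Plancherel gives $\|\psi^\eps(\cdot,0)\|_{L^2}^2\ge e^{-2T}\|\psi^\eps_T\|_{L^2}^2$; normalizing the data so that $\|\psi^\eps_T\|_{L^2}\approx 1$ produces the constant $C>0$, uniformly in $\eps$. The same bound holds verbatim on $\Om$ by the spectral theorem for the Dirichlet Laplacian, the eigenvalue factors $\exp\!\big(-(T-t)\lambda_j(1+\lambda_j)^{-1}\big)$ again lying in $[e^{-T},1]$.

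For the data I would take a semiclassical wave packet concentrated at a point $x_0$ with $\overline{B(x_0,r)}\subset \Om\setminus\overline{\om}_0$ (such a ball exists because $\Om\setminus\overline{\om}_0$ is open and nonempty, and it can be chosen at positive distance from $\partial\Om$). Setting
\[
 \psi^\eps_T(x)=\eps^{-N/4}\,e^{\,i\,\eta\cdot x/\eps}\,g\!\Big(\tfrac{x-x_0}{\sqrt\eps}\Big),\qquad g\in\S(\R^N),\ \eta\neq 0,
\]
fixes $\|\psi^\eps_T\|_{L^2}\approx 1$, while the $\sqrt\eps$-scale localization forces $\|\psi^\eps_T\|_{L^1}\approx \eps^{N/4}$, which is the origin of the shift $N/4$ in \eqref{eq:notobs}. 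For $x\in\om_0$ the identity part of the propagator contributes only a super-small Gaussian tail, so $\psi^\eps(x,t)=e^{-(T-t)}\int k_{T-t}(x-y)\,\psi^\eps_T(y)\,dy$ up to $O(\eps^\infty)$, an integral in which $|x-y|\ge d:=\mathrm{dist}(x_0,\om_0)>0$ and hence $k_{T-t}(x-\cdot)$ is smooth near $x_0$. Taylor expanding this kernel to any order $M$ and using that the oscillation $e^{i\eta\cdot y/\eps}$ turns every moment into
\[
 \Big|\int (y-x_0)^\alpha\psi^\eps_T(y)\,dy\Big|=\eps^{N/4+|\alpha|/2}\,\big|\widehat{z^\alpha g}(\eta/\sqrt\eps)\big|=O(\eps^\infty)
\]
(the Fourier transform of a Schwartz function at frequency $\eta/\sqrt\eps$), while the Taylor remainder is $O(\eps^{N/4+M/2})$, one obtains $\sup_{x\in\om_0,\,t}|\psi^\eps(x,t)|=O(\eps^\infty)$, hence the second estimate in \eqref{eq:notobs} for every integer $k$.

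The step I expect to be the real obstacle is making this upper bound rigorous and, above all, reconciling the whole-space Fourier construction with the Dirichlet condition $\varphi^\eps=0$ on $\Sigma$. One must control $k_s$ together with all its derivatives uniformly in $s\in[0,T]$ off the diagonal, and then transfer the estimate to $\Om$: since the beam lives deep inside $\Om$ and $k_s$ decays, the whole-space solution is super-small near $\partial\Om$, so $\varphi^\eps=(I-\Delta)^{-1}\psi^\eps$ nearly meets the boundary condition and the needed correction is itself negligible and does not spoil the two bounds. Alternatively one can run the entire argument intrinsically with $R=(I-\Delta_D)^{-1}$, whose Green kernel enjoys the same off-diagonal smoothness and decay. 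Once \eqref{eq:notobs} is established, the two estimates are incompatible with any observability inequality, and Proposition \ref{Theo:main} follows.
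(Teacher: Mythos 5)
Your proposal is correct and follows essentially the same route as the paper: an explicit Fourier multiplier for the whole-space system, a wave packet at spatial scale $\sqrt{\eps}$ centred at a point $x_0\in\Om\setminus\overline{\om}_0$, the uniform bounds $e^{-T}\le \mu_{T-t}(\xi)\le 1$ for the $t=0$ lower estimate, a non-stationary-phase argument for smallness on $\om_0$, and a boundary correction to restore the Dirichlet condition on $\Sigma$. The only differences are presentational: you write the packet in physical space and obtain the localization by Taylor-expanding the off-diagonal kernel against rapidly decaying moments, whereas the paper keeps the data on the Fourier side and gains powers of $\eps$ by repeated integration by parts in the frequency variable --- two implementations of the same estimate.
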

\begin{proof}

	Let us first consider the system  \eqref{eq:adjointpseudo_teo} posed in $\mathbb{R}^N\times(0,T)$, i.e. 
\begin{equation}\label{XF1}
	\left |   
		\begin{array}{lcl}
			\varphi - \Delta \varphi = \psi 	&  \mbox{in}	&\mathbb{R}^N\times(0,T),  		\\
			\noalign{\smallskip}\dis
			- \psi_t + \psi =\varphi 		&  \mbox{in}	&\mathbb{R}^N\times(0,T),			\\
			\noalign{\smallskip}\dis
			\psi(\cdot,T) = \psi_T 			&  \mbox{in}	&\mathbb{R}^N,
		\end{array}
	\right. 
\end{equation}
with $\psi_T \in  L^2(\mathbb{R}^N)$.

Taking the spatial Fourier transform, one verifies that 
$$
 \hat{\psi}(\xi,t)= e^{-\frac{ |\xi|^2}{ (1+  |\xi|^2)}(T-t)}\hat{\psi}_T(\xi)
 \quad\hbox{and}\quad
 \hat{\varphi}(\xi,t)= {e^{-\frac{ |\xi|^2}{ (1+  |\xi|^2)}(T-t)}\over 1+|\xi|^2}\hat{\psi}_T(\xi) 
$$
solves
\begin{equation}\label{eq:fourier0}
		\left |   
		\begin{array}{lcl}
			(1 +|\xi|^2)\hat{\varphi} = \hat{\psi}	&  \mbox{in}	&\mathbb{R}^N\times(0,T),  		\\
			\noalign{\smallskip}\dis
			- \hat{\psi}_t + \hat{\psi} =\hat{\varphi} 		&  \mbox{in}	&\mathbb{R}^N\times(0,T),			\\
			\noalign{\smallskip}\dis
			\hat\psi(\cdot,T) = \hat\psi_T 			&  \mbox{in}	&\mathbb{R}^N,
		\end{array}
	\right. 
\end{equation}
	where $\hat{\psi}_T$ is the Fourier transform of ${\psi}_T.$

	Now let $\theta$ be a real smooth function supported in $B_1(0)$ with $\|\theta\|_{L^2(\mathbb{R}^N)}=1$ and for each $\epsilon>0$ consider
\begin{equation}\label{Fou1}
 \hat{\psi}_T^{\epsilon}(\xi)= \epsilon^{N/4}\theta\left(\sqrt{\epsilon}\left(\xi - \frac{\overline{\xi}}{\epsilon}\right)\right)e^{-ix_0\cdot \xi},
\end{equation}
where $\overline{\xi} \in \mathbb{R}^N$, $|\overline{\xi}| =1$ and $x_0$ is a point around which we will localize our solution. 

Let $(\hat{\psi}^{\epsilon},\hat{\varphi}^{\epsilon})$ be the solution of \eqref{eq:fourier0} associated to 
$\hat{\psi}_T^{\epsilon}$. 
Since $\hat{\psi}_T^{\epsilon}\in L^2(\mathbb{R}^N)$, let $(\check{\psi}^{\epsilon},\check{\varphi}^{\epsilon})$ be the solution of \eqref{XF1} with initial datum $\check{\psi}_T^{\epsilon}$, the inverse Fourier transform of $\hat{\psi}_T^{\epsilon}$.

\null

\begin{claim}\label{C:C1} There exist two constants $C_1, C_2>0$, independent of $\epsilon$, such that 
$$
	C_1 \leq \|\check{\psi}^{\epsilon} (\cdot,0)\|_{L^2(\mathbb{R}^N)} \leq C_2.
$$
\end{claim}
\begin{proof}[Proof of Claim \ref{C:C1}]

 We have 
\begin{equation}\label{Fou2}
 \check{\psi}^{\epsilon}(x,t) = \frac{1}{(2\pi)^N}\int_{\mathbb{R}^N}e^{-\frac{ |\xi|^2}{ (1+  |\xi|^2)}(T-t)}\hat{\psi}_T^{\epsilon}(\xi) e^{ix\cdot \xi}d\xi
\end{equation}
and by Parseval's identity 
$$
\| \check{\psi}^{\epsilon}(\cdot,t)\|^2_{L^2(\mathbb{R}^N)} =  \frac{1}{(2\pi)^{2N}}\int_{\mathbb{R}^N}e^{-2\frac{ |\xi|^2}{ (1+  |\xi|^2)}(T-t)}|\hat{\psi}_T^{\epsilon}(\xi)|^2d\xi.
$$

	Since $\|\hat{\psi}_T^{\epsilon}\|_{ L^2(\mathbb{R}^N)}=1$, it follows that
\begin{equation}\label{eq:leqgec}
\frac{e^{-2T}}{(2\pi)^{2N}} \leq \| \check{\psi}^{\epsilon}(\cdot,0)\|^2_{L^2(\mathbb{R}^n)}\leq \frac{1}{(2\pi)^{2N}}.
\end{equation}
\end{proof}


\null

\begin{claim}\label{C:C2} Let $x_0\in \mathbb{R}^N$. For any $\delta>0$ there exists $C>0$, independent of $\epsilon$, such that 
$$
\| \check{\varphi}^{\epsilon}\|^2_{L^2(0,T; H^1(|x-x_0| \geq \delta))}+\| \check{\psi}^{\epsilon}\|^2_{L^2(0,T; L^2(|x-x_0| \geq \delta))} \leq C\epsilon^{k-N/4}.
$$
\end{claim}
\begin{proof}[Proof of Claim \ref{C:C2}]

Let us show the estimate for $\check{\varphi}^{\epsilon}$. Similar arguments give
the estimate for  $\check{\psi}^{\epsilon}$.

Since 
\begin{equation}\label{Fou3}
 \check{\varphi}^{\epsilon}(x,t) = \frac{1}{(2\pi)^N}\iint_{\mathbb{R}^N} {e^{-\frac{ |\xi|^2}{ (1+  |\xi|^2)}(T-t)}\over 1+|\xi|^2}\hat{\psi}_T^{\epsilon}(\xi) e^{ix\cdot \xi} d\xi,
\end{equation}
by the change of variables $\zeta = \sqrt{\epsilon}(\xi - \frac{\overline{\xi}}{\epsilon})$ we see that 
\begin{align}
\check{\varphi}^{\epsilon}(x,t) = \frac{\epsilon^{N/4-N/2}}{(2\pi)^N}
\iint_{|\zeta|\leq 1} \theta(\zeta)e^{i(x-x_0)\cdot (\frac{\zeta}{\sqrt{\epsilon}}+ \frac{\overline{\xi}}{\epsilon})}{e^{-\frac{ |\frac{\zeta}{\sqrt{\epsilon}}+ \frac{\overline{\xi}}{\epsilon}|^2}{ (1+  |\frac{\zeta}{\sqrt{\epsilon}}+ \frac{\overline{\xi}}{\epsilon}|^2)}(T-t)}\over 1+|\frac{\zeta}{\sqrt{\epsilon}}+ \frac{\overline{\xi}}{\epsilon}|^2}d\zeta
\end{align}

From the fact that 
%
$$
\Delta^k_{\zeta}e^{i(x-x_0)\cdot (\frac{\zeta}{\sqrt{\epsilon}}+ \frac{\overline{\xi}}{\epsilon})} = (-1)^k\left(\frac{|x-x_0|^2}{\epsilon} \right)^ke^{i(x-x_0)\cdot (\frac{\zeta}{\sqrt{\epsilon}}+ \frac{\overline{\xi}}{\epsilon})}\quad k\in \mathbb{N},
$$
for $|x-x_0| \geq \delta$ and for any  integer $k>N/4$, we have
\begin{align}
\check{\varphi}^{\epsilon}(x,t) 
&= (-1)^k\frac{\epsilon^{k-N/4}}{(2\pi)^N|x-x_0|^{2k}}\iint_{|\zeta|\leq 1}e^{i(x-x_0)\cdot (\frac{\zeta}{\sqrt{\epsilon}}+ \frac{\overline{\xi}}{\epsilon})}   \Delta^k_{\zeta}\bigl({\theta(\zeta)e^{-\frac{ |\frac{\zeta}{\sqrt{\epsilon}}+ \frac{\overline{\xi}}{\epsilon}|^2}{ (1+  |\frac{\zeta}{\sqrt{\epsilon}}+ \frac{\overline{\xi}}{\epsilon}|^2)}(T-t)}\over 1+|\frac{\zeta}{\sqrt{\epsilon}}+ \frac{\overline{\xi}}{\epsilon}|^2}\bigl)d\zeta
\end{align}
	For $\epsilon$ small, one can prove that  the term in $\Delta^k_{\zeta}$ in the above integral is bounded uniformly with respect to $\epsilon$ and then the following estimate holds


\begin{equation}\label{AA1}
|\check{\varphi}^{\epsilon}(x,t) | \leq C\frac{\epsilon^{k-N/4}}{|x-x_0|^{2k}}.
\end{equation}

Analogously, we have 
\begin{equation}\label{AA2}
|\nabla\check{\varphi}^{\epsilon}(x,t) | \leq C\frac{\epsilon^{k-N/4}}{|x-x_0|^{2k}}
\end{equation}
and this gives the estimate for $\check{\varphi}^\epsilon$.
\end{proof}

\begin{claim}\label{l:3}
	Let $\hat\psi_T^\eps$ as \eqref{Fou1} and $(\check{\varphi}^\eps,\check{\psi}^\eps)$ the associated solution 
	of \eqref{XF1}. Then, 
$$
\| \check{\psi}^{\epsilon}(\cdot,0)\|^2_{L^2(|x-x_0| \leq \delta)} \geq C>0.
$$	
\end{claim}
\begin{proof}

From \eqref{AA1} for $t=0$, we get
$$
\| \check{\psi}^{\epsilon}(\cdot,0)\|^2_{L^2(|x-x_0| \geq \delta)} \leq C\epsilon^{k-N/4}
$$
and from Claim \ref{C:C1} we have
$$
\frac{e^{-2T}}{(2\pi)^{2N}} \leq \| \check{\psi}^{\epsilon}(\cdot,0)\|^2_{L^2(\mathbb{R}^n)},
$$
which gives the result.


\end{proof}

We now finish the proof of Proposition \ref{Theo:main}. To do that, consider $x_0\in\Om\backslash\bar\om_0$ and 
$$
0<\eta<\min\{dist(x_0,\partial{\Om}),dist(x_0,\partial{\om_0})\}
$$
	such that
$
	\{x:|x-x_0|\leq \eta\}\subset \Om.
$

As before, take $(\check{\psi}^{\epsilon},\check{\varphi}^{\epsilon})$  the solution of \eqref{XF1} associated to $\check{\psi}_T^{\epsilon}$, the inverse Fourier transform of $\hat{\psi}_T^{\epsilon}$.

	Consider $(\bar\psi^\eps,\bar\varphi^\eps)$ the restriction of $(\check{\psi}^\eps, \check{\varphi}^\eps)$ to $\Om\times(0,T)$. Thus,
\begin{equation}\label{eq:adjointpseudobar}
		\left |   
		\begin{array}{lcl}
			\bar\varphi^\eps  - \Delta \bar\varphi^\eps = \bar\psi^\eps 		&  \mbox{in}	&Q,		\\
			\noalign{\smallskip}\dis
			-\bar\psi^\eps_t  + \bar\psi^\eps=  \bar\varphi^\eps  		&  \mbox{in}	&Q,		\\
			\noalign{\smallskip}\dis
			\bar\varphi^\eps = q^\eps 										& \mbox{on}	&\Sigma,	\\
			\noalign{\smallskip}\dis
			\bar\psi^\eps(\cdot,T)=\psi^\eps_T							&  \mbox{in}	&\Om
		\end{array}
	\right. 
\end{equation}
	where $\bar\psi^\eps_T:= \check{\psi}^\eps_T|_{\Om\times(0,T)}$ and $q^\eps:=\check{\varphi}^\eps\big|_{\partial\Om\times(0,T)}$. 

	From Claim \ref{C:C2}  and Claim \ref{l:3}, we have that
\begin{equation}\label{AA3}
	\| \bar\psi^{\epsilon}\|^2_{L^2(0,T; L^2(\om_0))} \leq C\epsilon^{k-N/4} \quad  \text{and} \quad \|\bar\psi^{\epsilon}(\cdot,0)\|^2_{L^2(\Om)}\geq C>0,
\end{equation}

respectively.



	Now, let  $(\varphi_{\eps}^{\star},\psi_{\eps}^{\star})$ be the solution of
\[
	\left|   
		\begin{array}{lcl}
			\varphi_{\eps}^{\star}  - \Delta \varphi_{\eps}^{\star} = \psi_{\eps}^{\star}  	&  \mbox{in}&Q,		\\
			\noalign{\smallskip}\dis
			-\psi_{\eps,t}^{\star}  + \psi_{\eps}^{\star} = \varphi_{\eps}^{\star} 			&  \mbox{in}&Q,		\\
			\noalign{\smallskip}\dis
			\varphi_{\eps}^{\star} = -q^\eps 										& \mbox{on}&\Sigma,	\\
			\noalign{\smallskip}\dis
			\psi_{\eps}^{\star}(\cdot,T)=0										&\mbox{in}&\Om.
		\end{array}
	\right. 
\]

	Noticing that  $q^\eps\in L^2(0,T;H^{1/2}(\partial\Om))$, one can show that  $\psi_{\eps}^{\star}\in H^1(0,T;L^2(\Om))$ and 
	the following estimate holds
$$
	\|\psi_{\eps}^{\star} \|_{H^1(0,T;L^2(\Om))}\leq C\|q^\eps\|_{L^2(0,T;H^{1/2}(\partial\Om))}.
$$
	
Nevertheless, because $q^\eps:=\check{\varphi}^\eps\big|_{\partial\Om\times(0,T)}$, by trace estimate and Claim \ref{C:C2},
we deduce that
\begin{equation}\label{AA4}
		\|\psi_{\eps}^{\star} \|_{H^1(0,T;L^2(\Om))}\leq C\epsilon^{k-N/4}.
\end{equation}


%

	Finally, defining $(\psi^\eps,\varphi^\eps)=(\bar\psi_{\eps}+\psi_{\eps}^{\star},\bar\varphi_{\eps}+\varphi_{\eps}^{\star})$, we see that $(\psi^\eps,\varphi^\eps)$ solves
	\eqref{eq:adjointpseudo_teo} and by
	\eqref{AA3}--\eqref{AA4} we obtain \eqref{eq:notobs}. 
\end{proof}

\subsection{Benjamin-Bona-Mahony with fixed controls} 

We now prove Theorem \ref{lacknull_BBM}. Here we assume that $\mathcal{O}=\omega\times (0,T)$, where $\omega$ is a proper open subset of $\Omega$.

The null controllability for system \eqref{eq:BBM} with control supported in $\omega\times (0,T)$ is equivalent to the existence of a constant $C>0$ such that the observability inequality 
\begin{equation}\label{obs:BBM}
	\|\psi(\cdot,0)\|^2_{ L^2(\Omega)} \leq C \int^T_0\!\!\!\int_{\omega}|\psi|^2dxdt,
\end{equation}
	holds for all $\psi_T \in L^2(\Omega)$ and $\psi$ is the solution of the adjoint equation
\begin{equation}\label{adj:BZK}
	\left |   
		\begin{array}{lcl}
			-\psi_t  + \Delta \psi_{t} - A\cdot \nabla \psi   =0 	&\mbox{in}&	Q, 		\\
			\noalign{\smallskip}\dis
			\psi= 0     			&\mbox{on}&	\Sigma, 	\\
			\noalign{\smallskip}\dis
			\psi(\cdot,T) = \psi_T 		&\mbox{in}&	\Omega.
		\end{array}
	\right. 
\end{equation}

	In order to prove Theorem \ref{lacknull_BBM}, we show that the observability inequality \eqref{obs:BBM} does not hold for every $\psi_T \in L^2(\Omega)$. 
	
	Given $x_0\in\Om\backslash\overline\om_0$, we set $\alpha(x)=x\cdot \xi_0+i{|x-x_0|^2\over 2}$ with $\xi_0\in \mathbb{R}^N\backslash \{0\}$ 
	and let $\delta>0$ be such that $B_\delta(x_0) \subset \Omega$ and $B_\delta(x_0)\cap \overline{\omega} = \emptyset$. For $h>0$,  we introduce the function
	$$
		\psi_h(x,t)=e^{i{\alpha(x)\over h}}\left(f_0(x)+hf_1(x,t)+h^2f_2(x,t)\right),
	$$
	where 
\begin{equation}\label{adj:BZK}
	\left\{
		\begin{array}{lcl}
			f_0 \in C^{\infty}_0(B_\delta(x_0)), \ \ \ f_0\equiv 1 \ \ \text{in}\ \  (B_{\delta\over2}(x_0), 		\\
			\noalign{\smallskip}\dis
			 f_{1}(x,t) =-if_0(x){\dis\int_t^T \!\!\!A(x,\tau) d\tau\cdot \nabla \alpha(x)\over |\nabla \alpha(x)|^2}, 	\\
			\noalign{\smallskip}\dis
			f_{2}(x,t) ={\dis-\int_t^T \!\!\!A(x,\tau) d\tau\cdot \nabla f_0-i\int_t^T \!\!\!f_{1}(x,\tau)A(x,\tau) d\tau\cdot \nabla \alpha -
			  2i \nabla f_{1} \cdot \nabla \alpha - if_{1} \Delta \alpha \over |\nabla \alpha(x)|^2}.
		\end{array}
	\right. 
\end{equation}	
\begin{remark}\label{rmk:om}
	Since $|\nabla \alpha(x)| \geq |\xi_0|\neq 0$ for all $x \in \overline{\Omega}$, $f_1$ and $f_2$ are well-defined and 
	$\text{supp}\,f_1(\cdot,t)\subset \text{supp}\,f_0$, $\text{supp}\,f_2(\cdot,t) \subset \text{supp}\,f_0$ for all $t\in [0,T]$. 
\end{remark}
	
	It is easy to check that $\psi_h\in C^\infty(\overline{Q})$ satisfies
\begin{equation}\label{sol:psilambda}
	\left |   
		\begin{array}{lcl}
			-\psi_{h,t}  + \Delta \psi_{h,t} - A\cdot \nabla \psi_{h}   = R 	&  \mbox{in}&	Q,  \\
			\noalign{\smallskip}\dis
			\psi_{h} = 0 							& \mbox{on}&	\Sigma, \\
			\noalign{\smallskip}\dis
			\psi_{h}(\cdot,T) = e^{i{\alpha\over h}}f_0						& \mbox{in}&	\Omega,
		\end{array}
	\right. 
\end{equation}
	with 
\begin{equation}
	\begin{alignedat}{2}
		R=&~  e^{i{\alpha\over h}}\biggl[ \bigl( -f_{1,t} + \Delta f_{1,t} -A\cdot \nabla f_1+2i \nabla \alpha \cdot\nabla f_{2,t}+i\Delta \alpha f_{2,t}-i A\cdot \nabla \alpha f_2  \bigl)h\\
		&\hspace{1cm}+\bigl( -f_{2,t} + \Delta f_{2,t} -A\cdot \nabla f_2  \bigl)h^{2}\biggr]\\
		:=&~e^{i{\alpha\over h}}\left(hR_1+h^2R_2\right).
		\end{alignedat}
\end{equation}

	Let now $\varphi\in H^1(0,T;H^2(\Om)\cap H^1_0(\Om))$ be the unique solution of
\begin{equation}\label{A2}
	\left |   
		\begin{array}{lcl}
			-\varphi_t  +  \Delta \varphi_{t} - A\cdot \nabla \varphi   = -R 	&  \mbox{in}&	Q,  \\
			\noalign{\smallskip}\dis
			\varphi = 0 							& \mbox{on}&	\Sigma, \\
			\noalign{\smallskip}\dis
			\varphi(\cdot,0) =  0 					& \mbox{in}&	\Omega.
		\end{array}
	\right. 
\end{equation}
	 The function $\psi=\psi_h+ \varphi$ solves

\begin{equation}\label{A2}
	\left |   
		\begin{array}{lcl}
			-\psi_t  +  \Delta \psi_{t} - A\cdot\nabla \psi   = 0 	&  \mbox{in}&	Q,  \\
			\noalign{\smallskip}\dis
			\psi = 0 							& \mbox{on}&	\Sigma, \\
			\noalign{\smallskip}\dis
			\psi(\cdot,T)= e^{i{\alpha\over h}}f_0		+\varphi(\cdot,T)				& \mbox{in}&	\Omega.
		\end{array}
	\right. 
\end{equation}

For $h$ small enough, we have
\begin{align}
 \| R\|_{L^2(\Omega\times (0,T))}^2 = h \int_0^T\int_{B_R(x_0)}e^{-{|x-x_0|^2\over h}}\left|R_1(x,t)+hR_2(x,t)\right| dx\,dt\sim O(h^{N/2+1}).
 \end{align}
 
 From standard energy estimates, one deduce that
\begin{equation}
\| \varphi\|_{L^2(\omega\times (0,T))}^2 
\leq \|R\|^2_{L^2(\Omega\times(0,T))}=O(h^{N/2+1}),
\end{equation}
	for $h$ small enough.
	
	Now, since $\psi_h\big|_{\om\times (0,T)}=0$, it follows that
\begin{equation}\label{eq:om}
\| \psi\|^2_{L^2(\omega\times (0,T))}\sim O(h^{N/2+1}).
\end{equation}

%

On the other hand, we have
\begin{equation}\label{eq:psi0}
\begin{alignedat}{2}
\| \psi(\cdot,0)\|^2_{L^2(\Omega)}
&= \int_{\Omega}e^{-{|x-x_0|^2\over h}} \left|f_0+hf_1+h^2f_2\right|^2dx \\
&\sim O(h^{N/2}).
\end{alignedat}
\end{equation}

	From \eqref{eq:om} and \eqref{eq:psi0}, it follows that the observability inequality \eqref{obs:BBM} cannot hold for every $\psi_T \in L^2(\Omega)$. This proves Theorem \ref{lacknull_BBM}.

\section{Positive Controllability Results}\label{sec:pos}

	This section is devoted to prove Theorems \ref{teo_pos_bzk} and \ref{teo_pos_bbm}.
	First, let us recall the weight functions needed to apply moving controls and 
	their consequences in terms of Carleman inequalities. 	

	In what follows, we assume that $X$ and $\om_0$ satisfy Assumption \ref{AssumptionMC}, and for each open set $\om\subset \mathbb{R}^N$, with $\overline{\om}_0\subset \om$, 
	we choose $\omega_1$, $\omega_2$ nonempty open sets in  $\mathbb{R}^N$  such that 
	$$ \overline{\omega}_0\subset \omega_1,\,\,  \overline{\omega}_1\subset \omega_2,\,\, \overline{\omega}_2\subset \omega.$$ 

	The following weight function is constructed in  \cite{CSZR}.
\begin{lemma}[\cite{CSZR}]\label{weight}
	There exist a positive number $\tau\in (0,\min\{1,T/2\})$ and a function 
	$\eta \in C^\infty ( \overline{\Omega} \times [0,T])$ such that
\ba
	\nabla \eta(x,t) \ne 0,&&\quad t\in[0,T],\ x\in \Omega \setminus \overline{X(\omega _1,t,0)},  \label{P1}\\
	\eta_t (x,t) \ne 0,  &&  \quad  t\in[0,T],\ x\in \Omega\setminus\overline{ X(\omega _1,t,0)},  \label{P2}\\
	\eta_t (x,t)  >0,  &&  \quad  t\in[0,\tau],\ x\in \Omega\setminus \overline{X(\omega _1,t,0)}, \label{P3} \\
	\eta_t  (x,t) <0,  &&  \quad  t\in[T-\tau ,T ],\ x\in \Omega\setminus  \overline{X(\omega _1,t,0)},\label{P4} \\
	\frac{\partial \eta}{\partial \nu}(x,t) \le 0,  && \quad  t\in [0,T ],\ x\in \partial \Omega , \label{P5}\\
	\eta (x,t) >\frac{3}{4}\|\eta \|_{\infty }, &&\quad  t\in [0,T ],\ x\in \overline{ \Omega} .\label{P6}
\ea
\end{lemma}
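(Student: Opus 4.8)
The plan is to reconstruct the construction of \cite{CSZR}. I would first observe that, since $\overline{\om}_0\subset\om_1$ implies $\Om\setminus\overline{X(\om_1,t,0)}\subset\Om\setminus\overline{X(\om_0,t,0)}$, it suffices to build $\eta$ with the non-degeneracy requirements holding on the \emph{larger} uncontrolled region $\Om\setminus\overline{X(\om_0,t,0)}$, where Assumption \ref{AssumptionMC} applies verbatim; the stated conditions \eqref{P1}--\eqref{P4} for $\om_1$ then follow by restriction. I would also dispose of the normalizations first: \eqref{P6} is obtained by adding a large constant to the final weight (which changes neither $\nabla\eta$ nor $\eta_t$), and \eqref{P5} by arranging a boundary collar in which $\eta$ is non-increasing in the outward normal direction. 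Thus the whole content is the simultaneous realization of the non-degeneracy conditions \eqref{P1}--\eqref{P4}.

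For the temporal structure I would exploit the sweeping and pinching geometry. By \eqref{A3b} the moving set $X(\om_0,t,0)$ covers all of $\overline{\Om}$ as $t$ runs over $[0,T]$; by \eqref{A3c}--\eqref{A3d} the uncontrolled region is connected for $t\in[0,t_1]\cup[t_2,T]$ and splits into two connected components for $t\in(t_1,t_2)$. I would read this as a control ``front'' crossing $\Om$, and use the pinch to label, coherently in $t$, the region not yet reached by the front and the region already crossed. I would then build $\eta$ so that $\eta_t>0$ on the not-yet-reached region and $\eta_t<0$ on the already-crossed region, leaving $\eta_t$ free on the tube $X(\om_0,t,0)$. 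This yields \eqref{P2} (the only sign change of $\eta_t$ occurring inside the tube) together with \eqref{P3}--\eqref{P4} for a suitable $\tau\in(0,\min\{1,T/2\})$, the point being that near $t=0$ the whole uncontrolled region is ``ahead'' of the front and near $t=T$ it is ``behind''. Condition \eqref{A3e} is what makes this assignment globally consistent: no continuous curve $t\mapsto\theta(t)\in\Om$ can avoid the tube, so there is no uncontrolled space-time trajectory along which $\eta(\theta(t),t)$ would be forced to be simultaneously increasing and decreasing.

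It remains to upgrade $\eta$ so that the spatial gradient condition \eqref{P1} also holds, without spoiling \eqref{P2}--\eqref{P4}. For this I would use the classical Fursikov--Imanuvilov/Morse argument slice by slice in $t$: a generic smooth function on $\overline{\Om}$ has only finitely many critical points, and these can be displaced into the open tube $X(\om_0,t,0)\cap\Om$ by composition with a diffeomorphism supported near the control, so that $\nabla_x\eta(\cdot,t)\neq0$ off the tube. The nontrivial part is to perform this displacement smoothly in $t$ and compatibly with the monotonicity already built; I would do so with a partition of unity in $t$ over $[0,T]$ subordinate to a cover on which the critical set stays inside the tube, invoking \eqref{A3e} once more to exclude a time-persistent family of critical points escaping $X(\om_0,t,0)$. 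A final perturbation vanishing to high order near the locus where $\eta_t$ could change sign restores \eqref{P2}--\eqref{P4}.

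The main obstacle is precisely this coupling between \eqref{P1} and \eqref{P2}--\eqref{P4}: a weight that is strictly monotone in $t$ on each uncontrolled component tends to develop a vanishing spatial gradient exactly along the loci where its monotonicity ``turns over'', and these loci must be confined to the moving tube for \emph{all} $t$ at once. The topological hypotheses \eqref{A3c}--\eqref{A3e} are tailored to make this confinement possible, and checking that the slice-wise displacement can be carried out continuously through the pinch times $t_1$ and $t_2$, where the uncontrolled region changes its number of connected components, is the delicate step of the argument.
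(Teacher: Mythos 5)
The paper does not prove this lemma: it is imported verbatim from \cite{CSZR}, so your proposal has to be measured against the construction given there. Your skeleton is the right one --- \eqref{P5} and \eqref{P6} are indeed harmless normalizations, and the sign pattern of $\eta_t$ (positive ``ahead'' of the sweeping front, negative ``behind'' it, with the turnover locus hidden in the moving tube) is exactly how the hypotheses \eqref{A3c}--\eqref{A3e} are exploited. But the mechanism you propose for the spatial non-degeneracy \eqref{P1} has a genuine gap. Choosing, for each $t$, a generic Morse function with critical points displaced into $X(\om_0,t,0)$ and then gluing these choices by a partition of unity in $t$ does not work as stated: a combination $\sum_k\chi_k(t)\eta_k(x)$ of functions, each with nonvanishing gradient outside the tube, can perfectly well acquire critical points outside the tube on the overlaps of the cover, and genericity gives no control over where they land. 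Nothing in your sketch excludes this, and it is precisely the difficulty the construction in \cite{CSZR} is built to avoid: there one takes a \emph{single} Fursikov--Imanuvilov weight $\eta_0$ on $\overline{\Om}$, with critical points in $\om_0$ and $\partial\eta_0/\partial\nu\le 0$, and transports it by the flow, $x\mapsto \eta_0(X(x,0,t))$ --- the same device as the cut-off $\zeta(x,t)=\vartheta(X(x,0,t))$ used later in Section \ref{sec:pos}. Since $X(\cdot,0,t)$ is a diffeomorphism, the critical set of the transported function is exactly $X(\mathrm{crit}(\eta_0),t,0)\subset X(\om_0,t,0)$ for all $t$ simultaneously, with no gluing and no genericity argument; the time-dependent part enforcing \eqref{P2}--\eqref{P4} is then superposed and checked to be compatible.

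Two further points. Your closing step --- ``a final perturbation vanishing to high order near the locus where $\eta_t$ could change sign restores \eqref{P2}--\eqref{P4}'' --- is circular as written: that perturbation is introduced to repair \eqref{P1}, you do not verify that it preserves the strict sign of $\eta_t$ off the tube, and alternately repairing \eqref{P1} and \eqref{P2}--\eqref{P4} comes with no termination argument. Also, your opening reduction (prove everything on the larger set $\Om\setminus\overline{X(\om_0,t,0)}$) discards exactly the slack that the enlargement from $\om_0$ to $\om_1$ in the statement is there to provide: the construction naturally confines the degenerate set to $\overline{X(\om_0,t,0)}$, possibly touching its boundary, which is why the conclusion is only claimed on the complement of $\overline{X(\om_1,t,0)}$.
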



 Next, we  introduce a real function $r\in C^\infty(0,T)$, symmetric with respect to $t={T\over2}$ and such that for $\tau>0$, as above, 
\[
	r(t) = \left\{ 
	\begin{array}{lcl}
	\noalign{\smallskip}\dis
	{1\over t} &\text{for}& 0<t\leq{\tau\over2}, \\
	\noalign{\smallskip}\dis
	\text{\rm strictly decreasing}&\text{for}& {\tau\over2}< t <\tau,\\
	\noalign{\smallskip}\dis
	1	&\text{for}&\tau \le t \le {T\over2},\\
	\noalign{\smallskip}\dis
	r(T-t)	&\text{for}& {T\over2} \le t < T
	\end{array}
	\right.
\]
 and define the weights
\[
	\begin{array}{lr}
	\gamma(x,t)= e^{\lambda \eta(x,t)}&(x,t)\in \Omega  \times (0,T),\\
	\noalign{\smallskip}\dis
	\alpha (x,t) = r(t) ( e^{2\lambda\|\eta \|_\infty }- \gamma(x,t))&(x,t)\in \Omega \times (0,T),\\
	\noalign{\smallskip}\dis
	\xi (x,t) = r(t) \gamma(x,t)&(x,t)\in \Omega  \times (0,T),\\
	\alpha^* (t) = \max\limits_{x\in \overline\Omega} \alpha(x,t) \quad &t\in  (0,T),\\
		\noalign{\smallskip}\dis
	\xi^* (t) =\min\limits_{x\in \overline\Omega} \xi(x,t)\quad &t\in  (0,T).
\end{array}
\] 
	where $\lambda >0$ is a parameter that will be chosen large enough.

	The following Carleman inequality is proved in \cite{CSZR}.
\begin{lemma}
\label{lemma:ode}
	There exist positive real numbers $\lambda _1> 0$,  $s_1> 0$ and $C_1>0$ $($depending on $\Om$ and $\om_0$$)$
	 such that for all $\lambda \ge \lambda _1$, all $s\ge s_1$ and all $q\in H^1(0,T;L^2(\Omega))$, 
	 the following inequality holds
\[
	s\lambda^2\iint_{Q}\xi|q|^2e^{-2s\alpha}\,dxdt\le C_1\left(\iint_{Q}|q_t|^2e^{-2s\alpha}\, dxdt 
	+ s^2\lambda ^2\int_0^T\!\!\!\int_{\mathcal{O}_{\om_2}(t)}e^{-2s\alpha}\xi^2|q|^2\,dxdt\right). 
\]  
\end{lemma}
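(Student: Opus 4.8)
The plan is to prove this as a one–dimensional (in time) Carleman estimate, treating $q$ as an $L^2(\Omega)$–valued function of $t$; no integration by parts in space is needed because the right–hand side contains only $q_t$. The natural starting point is the conjugated variable $z:=e^{-s\alpha}q$, for which $e^{-s\alpha}q_t=z_t+s\alpha_t z$. Squaring, integrating over $Q$, and integrating by parts in $t$ in the cross term (using $2\,\mathrm{Re}(z_t\bar z)=\partial_t|z|^2$) would yield the identity
\[
\iint_Q|q_t|^2e^{-2s\alpha}\,dx\,dt=\iint_Q|z_t|^2\,dx\,dt+\iint_Q\bigl(s^2\alpha_t^2-s\alpha_{tt}\bigr)|z|^2\,dx\,dt+\Bigl[\,s\!\int_\Omega\alpha_t|z|^2\,dx\,\Bigr]_{t=0}^{t=T}.
\]
First I would check that the boundary term vanishes: since $\alpha(x,t)=r(t)(e^{2\lambda\|\eta\|_\infty}-\gamma)$ with $r(t)\sim 1/t$ near $t=0$ and $r(t)\sim 1/(T-t)$ near $t=T$, and $e^{2\lambda\|\eta\|_\infty}-\gamma\ge\tfrac12 e^{2\lambda\|\eta\|_\infty}>0$ by \eqref{P6}, the factor $e^{-2s\alpha}$ decays faster than any power of $t$ (resp. $T-t$), dominating the polynomial blow–up of $\alpha_t$; as $q\in H^1(0,T;L^2(\Omega))$ stays bounded near the endpoints, $s\,\alpha_t|z|^2\to0$. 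Discarding the nonnegative term $\iint_Q|z_t|^2$ then leaves $\iint_Q(s^2\alpha_t^2-s\alpha_{tt})|z|^2\le\iint_Q|q_t|^2e^{-2s\alpha}$.

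The heart of the proof is a pointwise lower bound for the weight symbol $s^2\alpha_t^2-s\alpha_{tt}$. Computing $\alpha_t=r'(e^{2\lambda\|\eta\|_\infty}-\gamma)-r\lambda\eta_t\gamma$ and $\alpha_{tt}$ accordingly, I would show that there are $\lambda_1,s_1,c>0$ such that $s^2\alpha_t^2-s\alpha_{tt}\ge c\,s\lambda^2\xi$ on $Q\setminus\mathcal{O}_{\om_2}$ for $\lambda\ge\lambda_1$, $s\ge s_1$. The decisive point is that $\alpha_t$ never vanishes off the control tube: in the central time range ($r\equiv1$) one has $\alpha_t=-\lambda\eta_t\gamma$ with $|\eta_t|\ge c_0>0$ on the complement of the tube by \eqref{P2} (recall $\overline{\om}_1\subset\om_2$), whereas near the endpoints the sign conditions \eqref{P3}--\eqref{P4} force the two contributions to $\alpha_t$ to share the same sign, so no cancellation occurs; combined with $\xi\ge e^{\frac34\lambda\|\eta\|_\infty}$ from \eqref{P6} and $s\xi\ge1$, one absorbs $s\alpha_{tt}$ into $\tfrac12 s^2\alpha_t^2$. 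By the same analysis the complementary ``bad'' set where this lower bound fails is contained in $\mathcal{O}_{\om_2}$ \emph{and} bounded away from $t=0,T$; there $r\equiv1$, $\xi$ is comparable to a constant, and the crude upper bound $|s^2\alpha_t^2-s\alpha_{tt}|\le C s^2\lambda^2\xi^2$ holds.

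Putting this together, I would split $\iint_Q(s^2\alpha_t^2-s\alpha_{tt})|z|^2$ over $Q\setminus\mathcal{O}_{\om_2}$ and over $\mathcal{O}_{\om_2}$: on the first piece the lower bound gives $c\,s\lambda^2\iint_{Q\setminus\mathcal{O}_{\om_2}}\xi|z|^2$, while the second piece is controlled by $C s^2\lambda^2\int_0^T\!\int_{\mathcal{O}_{\om_2}(t)}\xi^2|z|^2$. Adding the harmless term $s\lambda^2\int_0^T\!\int_{\mathcal{O}_{\om_2}(t)}\xi|z|^2\le C s^2\lambda^2\int_0^T\!\int_{\mathcal{O}_{\om_2}(t)}\xi^2|z|^2$ to complete $\iint_Q\xi|z|^2$ on the left, and recalling $|z|^2=|q|^2e^{-2s\alpha}$ and $\mathcal{O}_{\om_2}(t)=X(\om_2,t,0)\cap\Omega$, delivers exactly the asserted inequality after renaming constants. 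I expect the main obstacle to be the pointwise weight estimate: one must track the competition between the singular factor $r$ and the exponential factor $\gamma$ near $t=0,T$ to verify that the degeneracy of $\alpha_t$ is confined to the moving control region, which is precisely where the geometric Assumption \ref{AssumptionMC} enters through \eqref{P2}--\eqref{P4}.
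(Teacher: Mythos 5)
The paper does not prove Lemma \ref{lemma:ode}: it is quoted verbatim from \cite{CSZR}, so there is no in-paper argument to compare against. Your route --- conjugating with $e^{-s\alpha}$, expanding $\iint|q_t|^2e^{-2s\alpha}=\iint|z_t|^2+\iint(s^2\alpha_t^2-s\alpha_{tt})|z|^2+[\,\cdot\,]_0^T$, killing the boundary terms with the blow-up of $r$, and reducing everything to a pointwise dichotomy on the symbol $s^2\alpha_t^2-s\alpha_{tt}$ --- is exactly the standard proof of such one-dimensional Carleman estimates and is the same mechanism used in the cited reference; the overall structure (good set where the symbol dominates $s\lambda^2\xi$, bad set inside the tube where one pays a local term) is sound.

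Two points in your sketch deserve more care than you give them. First, your justification that the bad set is bounded away from $t=0,T$ is incomplete: the sign argument via \eqref{P3}--\eqref{P4} only rules out cancellation in $\alpha_t$ \emph{outside} the tube, where $\eta_t$ has a sign; inside the tube near the endpoints you must instead argue that the singular term $r'(e^{2\lambda\|\eta\|_\infty}-\gamma)\sim -t^{-2}e^{2\lambda\|\eta\|_\infty}$ dominates $r\lambda\eta_t\gamma\sim t^{-1}\lambda\gamma$ for $\lambda$ large (this works, using \eqref{P6} to compare $e^{2\lambda\|\eta\|_\infty}$ with $\gamma$, but it is a different estimate from the one you invoke). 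Also note that the bad set reaches into $t\in[\tau/2,\tau]\cup[T-\tau,T-\tau/2]$ where $r\not\equiv1$, though $r,r',r''$ remain bounded there, which is all that matters. Second, the ``crude upper bound'' $|s^2\alpha_t^2-s\alpha_{tt}|\le Cs^2\lambda^2\xi^2$ on the bad set is the delicate step: the terms $r'(e^{2\lambda\|\eta\|_\infty}-\gamma)$ and $r''(e^{2\lambda\|\eta\|_\infty}-\gamma)$ carry the factor $e^{2\lambda\|\eta\|_\infty}$, which by \eqref{P6} is only controlled by $\gamma^{8/3}$, not by $\lambda^2\gamma^2$; one should first use that, on the bad set, $s^2\alpha_t^2$ is by definition already dominated by $cs\lambda^2\xi+s|\alpha_{tt}|$, and then track the powers of $\xi$ (and the resulting dependence of $s_1$ on $\lambda$) honestly rather than asserting the bound outright. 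Neither issue invalidates the approach, but as written these are the places where the proof is not yet complete.
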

	\noindent We recall that $\mathcal{O}_{\om_2}(t)=X(\omega_2,t,0)\cap \Om$ (see Definition \ref{def:O}). 

	For our  purposes, we prove the following two new Carleman inequalities for the Laplace operator.
\begin{lemma}
\label{lemma:elliptic}
	There exist positive real numbers $\lambda _2>0$,  $\tau_2>0$ and $C_2>0$, independents of $t$, 
	such that for all $\lambda \ge \lambda _2$, all $\tau\ge \tau_2$ and all 
	$z\in C^0([0,T];H^2(\Omega )\cap H^1_0(\Omega ))$, the following inequality holds
\[
	\ii\left[\lambda ^4(\tau\gamma)^3|z|^2+\lambda^2(\tau\gamma)|\nabla z|^2\right]e^{2\tau\gamma}\,dx
	\le C_2 \left(\!\ii|\Delta z|^2 e^{2\tau \gamma}\,dx
	+\int_{\mathcal{O}_{\om_2}(t)}\!\!\!\lambda ^4(\tau\gamma)^3|z|^2e^{2\tau\gamma}\,dx\!\right),~ \forall t\in [0,T].
\]
\end{lemma}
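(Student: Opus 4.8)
The plan is to establish a standard elliptic (Fursikov--Imanuvilov type) Carleman estimate at each frozen time $t\in[0,T]$, treating $t$ merely as a parameter and checking at the end that all constants can be chosen uniformly in $t$. Fix $t$ and write $\phi(x):=\gamma(x,t)=e^{\lambda\eta(x,t)}$. First I would conjugate the Laplacian: setting $v:=e^{\tau\phi}z$, a direct computation gives $e^{\tau\phi}\Delta z=P_1v+P_2v$, where I take the self-adjoint part $P_1v:=\Delta v+\tau^2|\nabla\phi|^2v$ and the skew-adjoint part $P_2v:=-2\tau\nabla\phi\cdot\nabla v-\tau\Delta\phi\,v$. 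The splitting is arranged so that $P_2$ is exactly antisymmetric on $H^1_0(\Omega)$ (because $\nabla\phi\cdot\nabla+\tfrac12\Delta\phi$ is skew-adjoint once the boundary term is killed by $v|_{\partial\Omega}=0$). Since $z\in H^2\cap H^1_0$ we have $v\in H^2\cap H^1_0$, so $v|_{\partial\Omega}=0$ is available throughout.

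Next I would expand
\[
\int_\Omega|e^{\tau\phi}\Delta z|^2\,dx=\|P_1v\|_{L^2(\Omega)}^2+\|P_2v\|_{L^2(\Omega)}^2+2\,\mathrm{Re}\int_\Omega P_1v\,\overline{P_2v}\,dx,
\]
and work out the cross term by integrating by parts the four resulting products. Using $\nabla\phi=\lambda\phi\nabla\eta$, $\Delta\phi=\lambda\phi\Delta\eta+\lambda^2\phi|\nabla\eta|^2$ and $D^2\phi=\lambda\phi(D^2\eta+\lambda\,\nabla\eta\otimes\nabla\eta)$, I would isolate the terms of highest order in $\lambda$; for $\lambda$ large these are $\sim\tau^3\lambda^4\int_\Omega\phi^3|\nabla\eta|^4|v|^2$ and $\sim\tau\lambda^2\int_\Omega\phi|\nabla\eta|^2|\nabla v|^2$, all remaining interior contributions being of strictly lower order in $\lambda$ or $\tau$ (here \eqref{P6} keeps $\phi$ bounded below, so lower powers of $\phi$ are absorbed by $\phi^3$) and hence absorbable once $\lambda\ge\lambda_2$, $\tau\ge\tau_2$. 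Property \eqref{P1} gives $|\nabla\eta(x,t)|\ge c>0$ on $\Omega\setminus\overline{X(\omega_1,t,0)}$, turning these into the genuine Carleman terms $\tau^3\lambda^4\phi^3|v|^2$ and $\tau\lambda^2\phi|\nabla v|^2$ there. The integrations by parts also generate boundary integrals; those carrying $v$ or the tangential gradient vanish since $v|_{\partial\Omega}=0$, while the surviving piece is proportional to $-\tau\int_{\partial\Omega}\tfrac{\partial\phi}{\partial\nu}|\partial_\nu v|^2\,d\sigma$, which is nonnegative because $\tfrac{\partial\phi}{\partial\nu}=\lambda\phi\,\tfrac{\partial\eta}{\partial\nu}\le0$ by \eqref{P5}; hence it may be dropped from the lower bound.

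It remains to upgrade from $\Omega\setminus\overline{X(\omega_1,t,0)}$ to all of $\Omega$ and to convert back to $z$. Since $\overline{\omega}_1\subset\omega_2$ and $X(\cdot,t,0)$ is a diffeomorphism, $\overline{X(\omega_1,t,0)}\subset X(\omega_2,t,0)$, so $X(\omega_1,t,0)\cap\Omega\subset\mathcal{O}_{\omega_2}(t)$. I would therefore add and subtract the missing contribution over $X(\omega_1,t,0)$: its zeroth-order part is bounded by $\int_{\mathcal{O}_{\omega_2}(t)}\tau^3\lambda^4\phi^3|v|^2\,dx$, i.e. exactly the observation term $\int_{\mathcal{O}_{\omega_2}(t)}\lambda^4(\tau\gamma)^3|z|^2e^{2\tau\gamma}\,dx$. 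For the local gradient term, which the observation integral does not see, I would run a separate cutoff energy estimate: with $\chi\in C_0^\infty(X(\omega_2,t,0))$, $\chi\equiv1$ on $X(\omega_1,t,0)$, multiplying $\Delta z$ by $\chi^2\lambda^2\tau\phi\,\bar z\,e^{2\tau\phi}$ and integrating by parts bounds $\int\chi^2\lambda^2\tau\phi|\nabla z|^2e^{2\tau\phi}$ by a small multiple of the left-hand side plus $C\int_{\mathcal{O}_{\omega_2}(t)}\lambda^4(\tau\gamma)^3|z|^2e^{2\tau\gamma}$ and $C\int_\Omega|\Delta z|^2e^{2\tau\phi}$. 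Finally, undoing the conjugation through $\nabla z=e^{-\tau\phi}(\nabla v-\tau\nabla\phi\,v)$, the two dominant $v$-terms reassemble into $\lambda^4(\tau\gamma)^3|z|^2+\lambda^2(\tau\gamma)|\nabla z|^2$ weighted by $e^{2\tau\gamma}$. Uniformity of $\lambda_2,\tau_2,C_2$ in $t$ follows since $\eta\in C^\infty(\overline\Omega\times[0,T])$ and \eqref{P1}--\eqref{P6} hold uniformly on the compact interval $[0,T]$, so the constant $c$ and all $C^2$ bounds on $\eta(\cdot,t)$ are $t$-independent.

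The main obstacle is the cross-term computation $2\,\mathrm{Re}\int_\Omega P_1v\,\overline{P_2v}$: one must carry out the integrations by parts carefully enough to see that the leading $\lambda^4\tau^3$ and $\lambda^2\tau$ terms appear with a definite positive sign while every other term is genuinely lower order, and in particular one must recover the \emph{full} gradient $|\nabla v|^2$ rather than only its component along $\nabla\eta$ (which is what the Hessian term $D^2\phi(\nabla v,\nabla v)$ delivers directly). The standard remedy is to use a controlled fraction of $\|P_1v\|^2$ together with the identity $\int_\Omega|\nabla v|^2=-\,\mathrm{Re}\int_\Omega v\,\overline{\Delta v}\,dx$ (legitimate since $v|_{\partial\Omega}=0$) to trade the missing transversal gradient against $\|e^{\tau\phi}\Delta z\|^2$ and lower-order terms; keeping the numerology consistent across the two parameters $\lambda$ and $\tau$ while doing so is the delicate part.
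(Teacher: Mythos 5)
Your proposal is correct and follows essentially the same route as the paper's Appendix A: conjugate with $e^{\tau\gamma}$, split into the self-adjoint part $\Delta w+\tau^2|\nabla\gamma|^2w$ and the skew-adjoint part, compute the cross term, use \eqref{P1} for positivity of the leading $\tau^3\lambda^4\gamma^3$ coefficient off $X(\omega_1,t,0)$ (absorbed into $\mathcal{O}_{\omega_2}(t)$), drop the boundary term by \eqref{P5}, and recover the full gradient from $\|\mathcal{M}_1w\|^2$ via $\int_\Omega|\nabla w|^2=-\int_\Omega w\,\Delta w\,dx$. The separate cutoff energy estimate you propose for the local gradient term is harmless but unnecessary, since the paper's global estimate \eqref{est:zero_order} already yields the gradient everywhere from the zeroth-order bound.
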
 
\begin{lemma}\label{carleman:elliptic_H-1}
	There exist positive real numbers $\lambda_3>0$, $\tau_3>0$ and $C_3>0$, independents of $t$, 
	such that for all $\lambda \geq \lambda_3$, all $\tau \geq \tau_3$ and all $(g,G)\in  H^1(0,T;L^2(\Om)\times L^2(\Om)^{N})$, the solution $z$ of 
\begin{equation}\label{transcar}
\left |   
\begin{array}{lcl}
 	- \Delta z  = g+ \nabla \cdot G 	&\mbox{in}& Q,  \\
	z = 0     					&\mbox{on}&\Sigma, \\
\end{array}
\right. 
\end{equation}
	satisfies 
\[
\begin{alignedat}{2}
	\ii\!\!\left[\lambda^2(\tau\gamma)^2 |z|^2\!+\! |\nabla z|^2\!\right]e^{2\tau\gamma}dx 
	\leq&~C_3 \left(\ii\!\!\left[\lambda^{-2}(\tau\gamma)^{-1}|g|^2
	\!+\!(\tau\gamma)|G|^2\right]\! e^{2\tau\gamma} dx+ \!\!\int_{\mathcal{O}_{\om_2}(t)}\!\!\!\!\!\!\!\lambda^2(\tau\gamma)^2 |z|^2e^{2\tau\gamma}\, dx\right ),
\end{alignedat}
\]
	for all $ t\in [0,T]$.
\end{lemma}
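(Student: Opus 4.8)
The plan is to obtain this weak--solution (i.e.\ $H^{-1}$--data) inequality from the strong--solution estimate of Lemma~\ref{lemma:elliptic} by a duality argument; everything is carried out at a fixed time $t\in[0,T]$, so the problem is purely elliptic in $x$, and $\|\cdot\|$ below denotes $\|\cdot\|_{L^2(\Omega)}$. As a first reduction I would argue that it suffices to control the zeroth--order part
\[
	B^2:=\int_\Omega \lambda^2(\tau\gamma)^2|z|^2 e^{2\tau\gamma}\,dx .
\]
Indeed, once $B^2$ is bounded, the gradient part of the left--hand side follows from a Caccioppoli--type energy estimate: multiplying $-\Delta z=g+\nabla\cdot G$ by $z\,e^{2\tau\gamma}$, integrating over $\Omega$ and using $z=0$ on $\partial\Omega$, one produces $\int_\Omega|\nabla z|^2 e^{2\tau\gamma}$ together with a commutator $\sim\int_\Omega \tau\lambda\gamma\,(\nabla\eta\cdot\nabla z)\,z\,e^{2\tau\gamma}$; by Young's inequality the commutator is absorbed half into $\int_\Omega|\nabla z|^2 e^{2\tau\gamma}$ and half into $B^2$, while the contributions of $g$ and $G$ are controlled by the right--hand side norms $\lambda^{-1}(\tau\gamma)^{-1/2}e^{\tau\gamma}g$ and $(\tau\gamma)^{1/2}e^{\tau\gamma}G$.

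To bound $B^2$ I would test $z$ against the solution of an auxiliary Dirichlet problem. Let $w$ solve $-\Delta w=\lambda^2(\tau\gamma)^2 e^{2\tau\gamma}\,z$ in $\Omega$ with $w=0$ on $\partial\Omega$. Integrating by parts twice, and using that both $w$ and $z$ vanish on $\partial\Omega$, gives
\[
	B^2=\int_\Omega(-\Delta w)\,z\,dx=\int_\Omega w\,(g+\nabla\cdot G)\,dx=\int_\Omega w\,g\,dx-\int_\Omega \nabla w\cdot G\,dx .
\]
By Cauchy--Schwarz, splitting the weights so that their product is $1$,
\[
	\Big|\int_\Omega w\,g\,dx\Big|\le \big\|\lambda(\tau\gamma)^{1/2}e^{-\tau\gamma}w\big\|\,\big\|\lambda^{-1}(\tau\gamma)^{-1/2}e^{\tau\gamma}g\big\|,\quad
	\Big|\int_\Omega \nabla w\cdot G\,dx\Big|\le \big\|(\tau\gamma)^{-1/2}e^{-\tau\gamma}\nabla w\big\|\,\big\|(\tau\gamma)^{1/2}e^{\tau\gamma}G\big\|,
\]
where the second factors are exactly the right--hand side norms of the lemma.

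It remains to show that the first factors are bounded by $CB$ (up to the localized term), and this is the heart of the matter. The estimate I need for $w$ is
\[
	\big\|\lambda(\tau\gamma)^{1/2}e^{-\tau\gamma}w\big\|^2+\big\|(\tau\gamma)^{-1/2}e^{-\tau\gamma}\nabla w\big\|^2
	\le C\Big(\big\|\lambda^{-1}(\tau\gamma)^{-1}e^{-\tau\gamma}\Delta w\big\|^2+\int_{\mathcal{O}_{\omega_2}(t)}\!\!\lambda^2(\tau\gamma)e^{-2\tau\gamma}|w|^2\,dx\Big),
\]
because for our choice of $w$ the data norm on the right equals $\|\lambda(\tau\gamma)e^{\tau\gamma}z\|=B$. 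This inequality is precisely Lemma~\ref{lemma:elliptic} written for the \emph{dual} weight $e^{-\tau\gamma}$ and multiplied throughout by the polynomial factor $\lambda^{-2}(\tau\gamma)^{-2}$: the dual--weight version is legitimate since $-\Delta$ is self--adjoint and the weight $\eta$ of Lemma~\ref{weight} enters only through $\nabla\eta\neq0$, the sign condition \eqref{P5} and the Dirichlet condition, all insensitive to the sign of the exponent; the extra factor $\lambda^{-2}(\tau\gamma)^{-2}$ is inserted by applying the estimate to $\lambda^{-1}(\tau\gamma)^{-1}w$ and commuting $(\tau\gamma)^{-1}$ past $\Delta$, the resulting lower--order terms being absorbed for $\lambda,\tau$ large. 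With this, the two pairings are $\le C\big(\|\lambda^{-1}(\tau\gamma)^{-1/2}e^{\tau\gamma}g\|+\|(\tau\gamma)^{1/2}e^{\tau\gamma}G\|\big)\big(B+\mathrm{obs}^{1/2}\big)$, and after Young's inequality and absorption of $\tfrac12 B^2$ one reaches $B^2\le C\,(\text{data})^2+C\,\mathrm{obs}$.

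I expect the main obstacles to be, first, the weight bookkeeping behind the polynomial--weighted dual estimate above: since $(\tau\gamma)$ is not constant the factor $\lambda^{-2}(\tau\gamma)^{-2}$ cannot simply be pulled out of the integrals, and one must track that the $L^2$-- and $H^1$--gains of Lemma~\ref{lemma:elliptic} differ by exactly $\lambda(\tau\gamma)^{1/2}$ per derivative, which is what makes the powers close when turning $L^2$ data into $H^{-1}$ data. Second, and more delicate, is converting the observation term $\int_{\mathcal{O}_{\omega_2}(t)}\lambda^2(\tau\gamma)e^{-2\tau\gamma}|w|^2$ for the auxiliary function $w$ into the stated local term $\int_{\mathcal{O}_{\omega_2}(t)}\lambda^2(\tau\gamma)^2 e^{2\tau\gamma}|z|^2$. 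For this I would exploit the spare room $\overline{\omega}_1\subset\omega_2$, $\overline{\omega}_2\subset\omega$ through a cut--off interior elliptic estimate; alternatively, one can avoid the transfer altogether by re--running the direct Fursikov--Imanuvilov proof of Lemma~\ref{lemma:elliptic} (from \cite{CSZR}) with the $H^{-1}$ right--hand side, in which case the localized term comes out directly in $z$.
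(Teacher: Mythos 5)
Your reduction to the zeroth--order quantity $B^2$ and the Caccioppoli step are sound, and they are close in spirit to what the paper actually does. The problem is the core of your argument: the ``dual--weight version'' of Lemma \ref{lemma:elliptic}, i.e.\ the same Carleman inequality with $e^{-\tau\gamma}$ in place of $e^{\tau\gamma}$, is \emph{false}, and your justification for it (``insensitive to the sign of the exponent'') is not correct. Writing $e^{-\tau\gamma}=e^{\tau\tilde\gamma}$ with $\tilde\gamma=-e^{\lambda\eta}$, the positive cubic term that drives the proof of Lemma \ref{lemma:elliptic} (see Claim \ref{claim:1}),
\[
2\tau^3\nabla\gamma\cdot\nabla(|\nabla\gamma|^2)=4\tau^3\lambda^4\gamma^3|\nabla\eta|^4+O(\lambda^3)\,\tau^3\gamma^3,
\]
changes sign under $\gamma\mapsto-\gamma$, and so does the boundary term $-2\tau\int_{\partial\Omega}\partial_\nu\gamma\,|\partial_\nu w|^2\,d\Gamma$, which is nonnegative only because $\partial_\nu\eta\le0$ (property \eqref{P5}) \emph{and} the exponent carries a plus sign. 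This one--sidedness is precisely the point of the convexification $\gamma=e^{\lambda\eta}$. Since $-\Delta$ is self--adjoint, naive duality for $H^{-1}$ data unavoidably calls for the reversed--weight estimate; this is why the reference the paper follows (Imanuvilov--Puel) does not use duality here. So as written your bound on the two pairings $\int wg$ and $\int\nabla w\cdot G$ does not close, independently of the (also unresolved) issue of transferring the local observation term from $w$ back to $z$.

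The paper's proof sidesteps all of this and is much more elementary: set $w=e^{\tau\gamma}z$, write $e^{\tau\gamma}\Delta z=\mathcal M_1w-\mathcal M_2w$ and $e^{\tau\gamma}(g+\nabla\cdot G)=e^{\tau\gamma}\tilde g+\nabla\cdot(e^{\tau\gamma}G)$ with $\tilde g=g-\tau\nabla\gamma\cdot G$, multiply by $w$ and integrate by parts once. The skew--adjoint part $\mathcal M_2w$ pairs to zero against $w$, leaving $\int_\Omega|\nabla w|^2\,dx+\tau^2\int_\Omega|\nabla\gamma|^2|w|^2\,dx$ on the left; since $\tau^2|\nabla\gamma|^2=\lambda^2(\tau\gamma)^2|\nabla\eta|^2$ is bounded below by $c\,\lambda^2(\tau\gamma)^2$ outside $X(\omega_1,t,0)$ by \eqref{P1}, and the exceptional region is absorbed into the local term over $\mathcal O_{\omega_2}(t)$, this single identity already yields both terms of the desired left--hand side after Cauchy--Schwarz on $\int e^{\tau\gamma}\tilde g\,w$ and $\int e^{\tau\gamma}G\cdot\nabla w$. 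Only first powers of $\tau$ and the sign of $|\nabla\gamma|^2$ are used --- no second conjugation, hence no need for a dual weight. Your closing remark (re--running the direct argument with the $H^{-1}$ right--hand side) is indeed the correct route; it is essentially the paper's proof, but your main argument does not reach it.
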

	\noindent  For sake of completeness, we give a sketch of the  proofs of Lemmas \ref{lemma:elliptic} and \ref{carleman:elliptic_H-1} in Appendixes \ref{App:mov_lapla} and \ref{App:H-1}, respectively.

%
%
%


%
%

 \subsection{Barenblatt-Zheltov-Kochina with moving controls}\label{secBZK}

 In this section we show the null controllability  for equation \eqref{eq:BZK}. In fact, Theorem \ref{teo_pos_bzk} is a direct consequence of the following result.
\begin{proposition}\label{nullBZK_sys}
	Suppose that Assumption \ref{AssumptionMC} holds and let $T>0$, $\om\subset\mathbb{R}^N$, with $\overline{\om}_0\subset \om$. For any $z_0 \in L^2(\Omega)$, there exists a moving control $v \in L^2(\mathcal{O}_{\om})$ such that the solution  
\begin{equation}\label{system:BZK1}
	\left |   
		\begin{array}{lcl}
			y  - \Delta y  = z			&\mbox{in}&	Q,		\\
			\noalign{\smallskip}\dis
			z_t + z  = y+ v \chi_{\mathcal{O}_{\om}}	&\mbox{in}&	Q,		\\
			\noalign{\smallskip}\dis
			y = 0						&\mbox{on}&	\Sigma, 	\\
			\noalign{\smallskip}\dis
			z(\cdot,0) = z_0					&\mbox{in}&	\Omega.
		\end{array}
	\right. 
\end{equation}
satisfies 
$$
	y(\cdot,T)=z(\cdot,T)=0\quad \hbox{in}\quad \Omega.
$$
\end{proposition}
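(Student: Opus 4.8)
The plan is to prove the result by duality, reducing the null controllability of \eqref{system:BZK1} to an observability inequality for the adjoint system and then establishing that inequality by coupling the two Carleman estimates of Lemmas \ref{lemma:ode} and \ref{lemma:elliptic}. Eliminating $y$ through the elliptic relation $y-\Delta y=z$ turns \eqref{system:BZK1} into the abstract equation $z_t=(-I+(I-\Delta)^{-1})z+v\chi_{\mathcal{O}_\om}$, whose formal adjoint is exactly the backward BZK adjoint: setting $\varphi=(I-\Delta)^{-1}\psi$ one has $\varphi-\Delta\varphi=\psi$, $-\psi_t+\psi=\varphi$, $\varphi|_\Sigma=0$ and $\psi(\cdot,T)=\psi_T$. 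Thus it suffices to prove
\be
	\|\psi(\cdot,0)\|_{L^2(\Om)}^2\le C\int_0^T\!\!\!\int_{\mathcal{O}_\om(t)}|\psi|^2\,dx\,dt .
\ee
A useful preliminary remark is that $\tfrac{d}{dt}\|\psi(\cdot,t)\|_{L^2(\Om)}^2=2\|\psi\|^2-2(\|\varphi\|^2+\|\nabla\varphi\|^2)\ge 0$, since $\langle\varphi,\psi\rangle=\|\varphi\|^2+\|\nabla\varphi\|^2\le\|\varphi\|\,\|\psi\|\le\|\psi\|^2$; hence $t\mapsto\|\psi(\cdot,t)\|_{L^2}^2$ is nondecreasing, so $\|\psi(\cdot,0)\|^2$ is controlled by $\int_{\tau}^{T-\tau}\|\psi(\cdot,t)\|^2\,dt$, i.e.\ by the left-hand side of the Carleman estimate on the middle interval where the weight $e^{-2s\alpha}$ is bounded below.

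Next I would run the two Carleman estimates on the same weight. The key compatibility is that, choosing $\tau=s\,r(t)$, one has $\tau\gamma=s\xi$ and $e^{2\tau\gamma}=e^{2s\xi}=e^{2sr(t)e^{2\lambda\|\eta\|_\infty}}e^{-2s\alpha}$, so the spatially independent factor $C(t)=e^{-2sr(t)e^{2\lambda\|\eta\|_\infty}}$ allows me to multiply the pointwise-in-$t$ estimate of Lemma \ref{lemma:elliptic}, applied to $z=\varphi$, by $C(t)$ and integrate in time, producing a space--time bound on $\iint[\lambda^4s^3\xi^3|\varphi|^2+\lambda^2 s\xi|\nabla\varphi|^2]e^{-2s\alpha}$ carrying the same weight $e^{-2s\alpha}$ that appears in Lemma \ref{lemma:ode} applied to $q=\psi$. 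Inserting $\psi_t=\psi-\varphi$ into the source of Lemma \ref{lemma:ode} and $\Delta\varphi=\varphi-\psi$ into the source of Lemma \ref{lemma:elliptic}, every resulting term of the form $\iint|\psi|^2e^{-2s\alpha}$ or $\iint|\varphi|^2e^{-2s\alpha}$ is lower order (it carries no power of $s,\lambda$) and, for $s,\lambda$ large, is absorbed by the two left-hand sides, whose weights carry factors $s\lambda^2\xi$ and $\lambda^4 s^3\xi^3$. This couples the two inequalities and yields
\be
	s\lambda^2\iint_Q\xi|\psi|^2e^{-2s\alpha}\,dxdt\le C\int_0^T\!\!\!\int_{\mathcal{O}_{\om_2}(t)}\!\!\big(s^2\lambda^2\xi^2|\psi|^2+\lambda^4s^3\xi^3|\varphi|^2\big)e^{-2s\alpha}\,dxdt .
\ee

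The main obstacle is the last observation term, which is localized in $\varphi$ rather than in $\psi$; this is where the nesting $\overline{\om}_2\subset\om$ is used. I would introduce a cutoff $\zeta(\cdot,t)$ equal to $1$ on $\mathcal{O}_{\om_2}(t)$ and supported in $\mathcal{O}_\om(t)$, multiply the elliptic identity $-\Delta\varphi+\varphi=\psi$ by $\zeta^2\lambda^4s^3\xi^3e^{-2s\alpha}\varphi$, and integrate by parts in $x$. Because $\xi^k e^{-2s\alpha}$ stays bounded on $Q$ (the exponential suppresses the blow-up of $r(t)$ at $t=0,T$), the right-hand side reduces to a genuine $\psi$-observation $\int_0^T\!\int_{\mathcal{O}_\om(t)}|\psi|^2$, together with cross terms in $\nabla\varphi$ supported on $\mathcal{O}_\om(t)\setminus\mathcal{O}_{\om_2}(t)$ and terms arising from the derivatives of the weight; by Young's inequality these are absorbed, for $\lambda,s$ large, into the gradient term $\lambda^2 s\xi|\nabla\varphi|^2e^{-2s\alpha}$ already present on the left. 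This converts the $\varphi$-observation into a $\psi$-observation and, combined with the harmless $\psi$-observation term, gives $s\lambda^2\iint_Q\xi|\psi|^2e^{-2s\alpha}\le C\int_0^T\!\int_{\mathcal{O}_\om(t)}|\psi|^2$. Feeding this into the monotonicity remark yields the observability inequality, and a standard duality argument (minimization of the coercive functional $J(\psi_T)=\tfrac12\int_0^T\!\int_{\mathcal{O}_\om}|\psi|^2+\int_\Om z_0\,\psi(\cdot,0)$, whose coercivity is precisely the observability estimate) produces the control $v\in L^2(\mathcal{O}_\om)$ driving $z(\cdot,T)=0$, hence $y(\cdot,T)=0$.
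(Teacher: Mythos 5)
Your overall architecture --- duality, the monotonicity of $t\mapsto\|\psi(\cdot,t)\|_{L^2(\Om)}^2$ to pass from the degenerate Carleman weight to the observation at $t=0$, the choice $\tau=s\,r(t)$ so that the elliptic and ODE Carleman weights match, and the reduction to absorbing a local observation of $\varphi$ --- coincides with the paper's. The gap is in the one step that carries the actual difficulty: eliminating $s^3\lambda^4\int_0^T\!\int_{\mathcal{O}_{\om_2}(t)}\xi^3|\varphi|^2e^{-2s\alpha}\,dxdt$. You propose to do this by multiplying $\varphi-\Delta\varphi=\psi$ by $\zeta^2 s^3\lambda^4\xi^3e^{-2s\alpha}\varphi$ and integrating by parts in space, claiming the error terms are absorbed by the global gradient term. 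They are not. Writing $\theta=\zeta^2 s^3\lambda^4\xi^3e^{-2s\alpha}$, the integration by parts produces the cross term $\iint(\nabla\theta\cdot\nabla\varphi)\varphi\,dxdt$, and since $|\nabla(\xi^3e^{-2s\alpha})|\le Cs\lambda\xi\cdot\xi^3e^{-2s\alpha}$ one has $|\nabla\theta|\sim s^4\lambda^5\xi^4e^{-2s\alpha}$ on $\mathrm{supp}\,\zeta$. Whichever way you split by Young's inequality, the companion of the absorbable piece is a \emph{local} term in $|\varphi|^2$ or $|\nabla\varphi|^2$ carrying a strictly \emph{higher} power of $s\xi$ than the global left-hand-side terms $s^3\lambda^4\xi^3|\varphi|^2$ and $s\lambda^2\xi|\nabla\varphi|^2$ (for instance $\int_{\om}s^7\lambda^8\xi^7|\varphi|^2e^{-2s\alpha}\,dx$, or $\int_{\om}s^5\lambda^6\xi^5|\nabla\varphi|^2e^{-2s\alpha}\,dx$ if you weight the split the other way). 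These dominate, rather than being dominated by, the term you started from, so the argument is circular, and iterating on larger cutoffs only increases the powers. Equivalently, $-\iint(\nabla\theta\cdot\nabla\varphi)\varphi\,dxdt=\tfrac12\iint\Delta\theta\,|\varphi|^2dxdt$ with $|\Delta\theta|\sim s^5\lambda^6\xi^5e^{-2s\alpha}$, again worse than the quantity to be bounded. The elliptic coupling alone cannot trade a local $\varphi$ for a local $\psi$ at this weight strength; each spatial derivative of the Carleman weight costs a factor $s\lambda\xi$ and both factors in the resulting product are $\varphi$'s.

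The paper circumvents this by using the \emph{ODE} coupling instead: it substitutes $\varphi=\psi-\psi_t$ into one factor of $|\varphi|^2$, so the local integral becomes $\iint\zeta\xi^3\varphi\psi e^{-2s\alpha}dxdt-\iint\zeta\xi^3\varphi\,\psi_t e^{-2s\alpha}dxdt$, and integrates the second term by parts \emph{in time}. Every resulting error term is then of the form $(\text{weight})\cdot\varphi\cdot\psi$ or $(\text{weight})\cdot\varphi_t\cdot\psi$, so Young's inequality places the small piece on a \emph{global} $\varphi$ (or $\varphi_t$) integral and the large piece on a \emph{local} $\psi$ integral --- exactly what is needed. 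The price is a global weighted bound on $\varphi_t$, which the paper obtains beforehand from the elliptic equation $\varphi_t-\Delta\varphi_t=\psi-\varphi$ (estimate \eqref{eq:BZKenergy}); your proposal contains no analogue of this. To repair your argument, replace the spatial integration by parts by this time integration by parts, and add the weighted $\varphi_t$ energy estimate to the left-hand side of your intermediate inequality before treating the local term. The rest of your proposal (the dissipativity remark, the weight matching, and the final duality/minimization step) is correct and matches the paper.
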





	We prove Proposition \ref{nullBZK_sys} by showing that there exists $C>0$ such that
\begin{equation}\label{adj:BZK_obs}
 	\|\psi(\cdot,0)\|^2_{ L^2(\Omega)} \leq C \int^T_0\!\!\!\int_{\mathcal{O}_\om(t)}|\psi|^2dxdt, \quad \forall\psi_T \in L^2(\Omega),
\end{equation}
	with $(\varphi,\psi)$ solution of	
\begin{equation}\label{adj:BZK}
	\left |   
		\begin{array}{lcl}
			\varphi - \Delta \varphi = \psi 	&\mbox{in}&	Q,  		\\
			\noalign{\smallskip}\dis
			- \psi_t + \psi =\varphi 	&\mbox{in}&	Q, 		\\
			\noalign{\smallskip}\dis
			\varphi= 0     			&\mbox{on}&	\Sigma, 	\\
			\noalign{\smallskip}\dis
			\psi(T) = \psi_T 		&\mbox{in}&	\Omega.
		\end{array}
	\right. 
\end{equation}

	By a straightforward argument, the proof of the observability inequality \eqref{adj:BZK_obs} is reduced to the following 
	Carleman inequality:
\begin{theorem}\label{thm:Carl_BZK}
	Under Assumption \ref{AssumptionMC}. For any $T>0$, $\om\subset\mathbb{R}^N$, with $\overline{\om}_0\subset \om$, there exist positive constants $s_0$, $\lambda_0\geq 1$ and~$C$, only depending on $\Omega$ and $\omega$, such that, 
	for any $\psi _T\in L^2(\Omega)$, the solution $(\varphi,\psi)$ to the adjoint system \eqref{adj:BZK} satisfies:
\begin{align}
	&\iint_Q [s\lambda ^2 \xi  |\nabla \varphi|^2    +  s^3\lambda ^4\xi ^3 |\varphi|^2  ]e^{-2s\varphi} dxdt + 
	\iint_Q s\lambda ^2  \xi |  \psi |^2 e^{-2s\alpha }dxdt \nonumber \\
	&+ \iint_Q [s\lambda ^2 \xi^*  |\nabla \varphi_t|^2    +  s\lambda ^2\xi^* |\varphi_t|^2  ]e^{-2s\alpha^*} dxdt  \nonumber\\
	& \le C s^5\lambda ^6\int_0^T \!\!\! \int_{\mathcal{O}_\om(t)}    \xi ^5 e^{-4s\alpha + 2s\alpha^*}  |\psi|^2  dxdt.  \label{E6T0} 
\end{align}
	for all  $s \geq  s_0(T + T^2) $ and for all $\lambda \geq \lambda_0$.
\end{theorem}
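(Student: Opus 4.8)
The plan is to derive the Carleman estimate \eqref{E6T0} by combining the three weighted inequalities of Lemmas \ref{lemma:ode}, \ref{lemma:elliptic} and \ref{carleman:elliptic_H-1}, exploiting the fact that in the adjoint system \eqref{adj:BZK} the variable $\psi$ obeys a pure first--order ODE in time while $\varphi$ solves an elliptic problem on each time slice. Concretely, the second equation gives $\psi_t=\psi-\varphi$, the first gives $-\Delta\varphi=\psi-\varphi$, and differentiating the first in time yields $(I-\Delta)\varphi_t=\psi-\varphi$ with $\varphi_t=0$ on $\Sigma$. The device that glues the parabolic--type and elliptic weights together is the identity $-2s\alpha=-2s\,r(t)e^{2\lambda\|\eta\|_\infty}+2s\xi$: freezing $t$ and running Lemma \ref{lemma:elliptic} with Carleman parameter $\tau=s\,r(t)$ (admissible since $r\ge 1$, hence $\tau\ge s\ge\tau_2$ for $s$ large) turns $e^{2\tau\gamma}$ into $e^{2s\xi}$ and $\tau\gamma$ into $s\xi$; multiplying that slice estimate by the purely temporal factor $e^{-2s\,r(t)e^{2\lambda\|\eta\|_\infty}}$ converts every $e^{2\tau\gamma}$ into $e^{-2s\alpha}$ and aligns the powers $\lambda^4(\tau\gamma)^3=s^3\lambda^4\xi^3$ and $\lambda^2(\tau\gamma)=s\lambda^2\xi$ with those in \eqref{E6T0}.

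First I would apply Lemma \ref{lemma:ode} to $q=\psi$. Substituting $\psi_t=\psi-\varphi$ and $|\psi_t|^2\le 2|\psi|^2+2|\varphi|^2$, the $|\psi|^2$ contribution is absorbed by the left--hand term $s\lambda^2\iint_Q\xi|\psi|^2e^{-2s\alpha}$ for $s$ large (recall $\xi\ge c>0$), so that this term is controlled by $C\iint_Q|\varphi|^2e^{-2s\alpha}$ plus a local $\psi$--integral on $\mathcal{O}_{\om_2}(t)$. Next I would run the frozen--parameter elliptic estimate on $z=\varphi(\cdot,t)$ and integrate in $t$ as above; since $|\Delta\varphi|^2=|\psi-\varphi|^2\le 2|\psi|^2+2|\varphi|^2$, the $\varphi$ part is absorbed by the dominant $s^3\lambda^4\iint_Q\xi^3|\varphi|^2e^{-2s\alpha}$, giving $\iint_Q[s^3\lambda^4\xi^3|\varphi|^2+s\lambda^2\xi|\nabla\varphi|^2]e^{-2s\alpha}$ controlled by $C\iint_Q|\psi|^2e^{-2s\alpha}$ plus a local $\varphi$--integral on $\mathcal{O}_{\om_2}(t)$. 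Adding the two estimates, the crossed lower--order terms $\iint_Q|\varphi|^2e^{-2s\alpha}$ and $\iint_Q|\psi|^2e^{-2s\alpha}$ are absorbed into the respective dominant left--hand sides (dividing by the large factors $s^3\lambda^4\xi^3$, resp. $s\lambda^2\xi$), so the first two lines of \eqref{E6T0} are bounded by the two local integrals $s^2\lambda^2\iint_{\mathcal{O}_{\om_2}}\xi^2|\psi|^2e^{-2s\alpha}$ and $s^3\lambda^4\iint_{\mathcal{O}_{\om_2}}\xi^3|\varphi|^2e^{-2s\alpha}$.

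For the time--derivative line of \eqref{E6T0} I would not invoke a Carleman estimate but a pointwise--in--$t$ energy identity: testing $(I-\Delta)\varphi_t=\psi-\varphi$ against $\varphi_t$ and using coercivity of $I-\Delta$ gives $\|\varphi_t(\cdot,t)\|_{H^1_0(\Om)}^2\le C(\|\psi(\cdot,t)\|_{L^2}^2+\|\varphi(\cdot,t)\|_{L^2}^2)$ for every $t$. Because $\alpha^*$ and $\xi^*$ depend only on $t$, multiplying this by $s\lambda^2\xi^*(t)e^{-2s\alpha^*(t)}$ and integrating factors the weight cleanly out of the spatial integral; since $\xi^*e^{-2s\alpha^*}\le \xi e^{-2s\alpha}$ pointwise, the right--hand side is dominated by the quantities already controlled in the previous step. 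This explains why the extremal weights $\xi^*,\alpha^*$ appear on that line and why its power of $s$ is only $1$, and it shows the $\varphi_t$--terms come essentially for free once $\psi$ and $\varphi$ are estimated.

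The main obstacle is the final local--term reduction, since the right--hand side of \eqref{E6T0} carries only $\psi$ (no $\varphi$) and on the larger set $\mathcal{O}_\om(t)$: one must absorb $s^3\lambda^4\iint_{\mathcal{O}_{\om_2}}\xi^3|\varphi|^2e^{-2s\alpha}$ into a local $\psi$--integral. The natural route is to insert a smooth moving cutoff $\theta(x,t)$ equal to $1$ on $\mathcal{O}_{\om_2}(t)$ and supported in $\mathcal{O}_\om(t)$ (possible because $\overline{\om}_2\subset\om$ and by Assumption \ref{AssumptionMC}), and to exploit the coupling either through $\varphi=\psi-\psi_t$ or through the elliptic identity $\varphi-\Delta\varphi=\psi$ tested against a weighted multiple of $\overline\varphi$. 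Integration by parts produces a favourable $-\iint\theta(\cdot)|\nabla\varphi|^2$ together with the genuinely local source $\iint\theta(\cdot)\psi\,\overline\varphi$ and commutator terms in which each spatial derivative hitting the weight costs a factor $s\lambda\xi$ (as $\nabla e^{-2s\alpha}=2s\lambda\xi\,\nabla\eta\,e^{-2s\alpha}$). Balancing these by Young's inequality against the already--controlled $\iint\xi|\nabla\varphi|^2e^{-2s\alpha}$ and against the dominant left--hand terms is precisely what forces both the elevated power $s^5\lambda^6\xi^5$ and the enlarged weight $e^{-4s\alpha+2s\alpha^*}$, the extra slack $e^{2s(\alpha^*-\alpha)}\ge1$ being needed to swallow the commutator and time--derivative contributions. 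I expect the delicate points to be the exact choice of the weight powers carried by the test function, so that every regenerated $\varphi$--term is a genuinely small multiple of a left--hand side term rather than a higher--order one, and the verification that the cutoff can be transported along the flow $X$ compatibly with the topological requirements \eqref{A3a}--\eqref{A3e}. Once this bookkeeping closes, fixing $\lambda$ and then $s$ large and enlarging $\mathcal{O}_{\om_2}$ to $\mathcal{O}_\om$ completes the proof.
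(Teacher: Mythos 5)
Your proposal follows essentially the same route as the paper's proof: Lemma \ref{lemma:ode} applied to the ODE in $\psi$, Lemma \ref{lemma:elliptic} on each time slice with $\tau=s\,r(t)$ followed by multiplication by the purely temporal factor $e^{-2s\,r(t)e^{2\lambda\|\eta\|_\infty}}$, an energy estimate for the elliptic equation $\varphi_t-\Delta\varphi_t=\psi-\varphi$ to place the $\varphi_t$-terms on the left, and a cutoff transported by the flow $X$ to convert the local $\varphi$-integral into a local $\psi$-integral. The one point to settle in your last step is that, of the two couplings you offer, only $\varphi=\psi-\psi_t$ closes (and it is the one the paper uses): integrating by parts in time sends the derivative onto the weight, the transported cutoff and $\varphi$, and the resulting $\varphi_t\psi$ term is exactly what the global $\varphi_t$-terms absorb, producing the weight $e^{-4s\alpha+2s\alpha^*}$; by contrast, testing $\varphi-\Delta\varphi=\psi$ against a weighted multiple of $\overline\varphi$ regenerates, through the commutator in which the gradient falls on the cutoff and the weight, a \emph{local} term in $\varphi$ or in $\nabla\varphi$ carrying one extra power of $s\lambda\xi$, which is controlled by neither side of the estimate.
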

\begin{proof}
	First, applying Lemma \ref{lemma:ode} to $\eqref{adj:BZK}_2$ and taking $\lambda$ large enough, we obtain 
\be  \label{est:ode}
	s \lambda ^2 \iint_Q \xi  |  \psi |^2 e^{-2s\alpha}dxdt \le C_1
	\left(\iint_Q |\varphi|^2 e^{-2s\alpha} dxdt 
	+s^2 \lambda ^2 \int_0^T\!\!\!\int_{\mathcal{O}_{\om_2}(t)}  \xi ^2  |\psi|^2e^{-2s\alpha} dxdt\right),  
\ee  
	
	Next, applying Lemma \ref{lemma:elliptic} to $\eqref{adj:BZK}_1$, we obtain
\[
	\tau^3\lambda ^4\ii\!\!\gamma^3e^{2\tau\gamma}|\varphi|^2\,dx
	+\tau\lambda^2\ii\!\!\gamma e^{2\tau\gamma}|\nabla \varphi|^2\,dx
	\le C_0 \left(\ii\!\! e^{2\tau \gamma}|\psi|^2\,dx
	+\tau^3\lambda ^4\int_{\mathcal{O}_{\om_2}(t)}\!\!\!\!\!\!
	\gamma^3e^{2\tau\gamma}|\varphi|^2\,dx\right), 
\]	
	for $\lambda\geq\lambda_0$ and $\tau\geq \tau_0$.
	
	To connect this elliptic estimate with \eqref{est:ode}, we set
$$
	\tau=sg(t),
$$
	we multiply by
$$
	e^{-2sg(t)e^{2\|\eta\|_\infty}}
$$
	and we integrate with respect to $t$ in (0,T). Let us remark that the last choice of $\tau$ will be greater that $\tau_0$
	whatever we take $s_0\geq \tau_0$, then we deduce
\begin{equation}\label{est:elliptic}
\begin{alignedat}{2}
		&\iint_Q [s\lambda ^2 \xi  |\nabla \varphi|^2    +  s^3\lambda ^4 \xi ^3 |\varphi|^2  ]e^{-2s\alpha} dxdt \\
		&\le C_0 \left( \iint_Q |\psi |^2 e^{-2s\alpha}dxdt 
		+ s^3\lambda ^4 \int_0^T \!\!\! \int_{\mathcal{O}_{\om_2}(t) }  \xi ^3 |\varphi|^2 e^{-2s\alpha} dxdt \right). 
	\end{alignedat}
\end{equation}

Adding \eqref{est:ode} and  \eqref{est:elliptic}, and absorbing the lower order terms by taking $\lambda$ large enough,  we get  
\begin{equation}\label{est:elliptic+ode}
\begin{alignedat}{2}
		&\iint_Q[s\lambda ^2 \xi  |\nabla \varphi|^2    +  s^3\lambda ^4\xi ^3 |\varphi|^2  ]e^{-2s\alpha} dxdt 
		+ \iint_Q s\lambda ^2  \xi |  \psi |^2 e^{-2s\alpha}dxdt \\
		&\le C \left( s^2\lambda ^2\int_0^T \!\!\! \int_{\mathcal{O}_{\om_2}(t)} \xi ^3 |\psi|^2 e^{-2s\alpha} dxdt 
		+ s^3 \lambda ^4\int_0^T \!\!\! \int_{\mathcal{O}_{\om_2}(t)}   \xi ^3 |\varphi|^2 e^{-2s\alpha} dxdt \right).
	\end{alignedat}
\end{equation}

	Next, we need to eliminate the local integral of $\varphi$ appearing in the right hand side of \eqref{est:elliptic+ode}. 
	For that, we first take the time derivative in first equation of system \eqref{adj:BZK} and use the second 
	equation of the same system to see that $\varphi_t$ solves the following  elliptic equation 

\begin{equation}\label{adj:BZKvarphi}
	\left |   
		\begin{array}{lcl}
			\varphi_t - \Delta \varphi_t = \psi -\varphi &     \mbox{in}& Q,  \\
			\noalign{\smallskip}\dis
			\varphi_t= 0     &    \mbox{on} &   \Sigma.
		\end{array}
	\right. 
\end{equation}

Then, from  \eqref{est:elliptic+ode}  and energy estimates  for \eqref{adj:BZKvarphi}, it is not difficult to see that 
\begin{equation}\label{eq:BZKenergy} 
	\begin{alignedat}{2}
	&\int_Q [s\lambda ^2 \xi^*|\nabla \varphi_t|^2 + s\lambda ^2\xi^* |\varphi_t|^2  ]e^{-2s\alpha^*} dxdt \\
 	&\le C \left( s^2\lambda ^2\int_0^T \!\!\! \int_{\mathcal{O}_{\om_2}(t)}\xi ^3 |\psi|^2 e^{-2s\alpha} dxdt 
	+ s^3 \lambda ^4\int_0^T \!\!\! \int_{\mathcal{O}_{\om_2}(t)}   \xi ^3 |\varphi|^2 e^{-2s\alpha} dxdt \right).  
	\end{alignedat}
\end{equation}

	Combining  \eqref{est:elliptic+ode} and  \eqref{eq:BZKenergy}, we get 
\begin{equation}\label{estnew:elliptic+ode}
\begin{alignedat}{2}
		&\iint_Q[s\lambda ^2 \xi  |\nabla \varphi|^2    +  s^3\lambda ^4\xi ^3 |\varphi|^2  ]e^{-2s\alpha} dxdt 
		+ \iint_Q s\lambda ^2  \xi |  \psi |^2 e^{-2s\alpha}dxdt \\
		&+\int_Q [s\lambda ^2 \xi^*|\nabla \varphi_t|^2 + s\lambda ^2\xi^* |\varphi_t|^2  ]e^{-2s\alpha^*} dxdt\\
		\le &~C \left( s^2\lambda ^2\int_0^T \!\!\! \int_{\mathcal{O}_{\om_2}(t)}\xi ^3 |\psi|^2 e^{-2s\alpha} dxdt 
		+ s^3 \lambda ^4\int_0^T \!\!\! \int_{\mathcal{O}_{\om_2}(t)}  \xi ^3 |\varphi|^2 e^{-2s\alpha} dxdt \right).
	\end{alignedat}
\end{equation}

	Now, let us introduce the function 
\begin{equation}\label{zeta}
	\zeta (x,t) := \vartheta(X(x,0,t) ),
\end{equation}
	where $\vartheta$ is a cut-off function satisfying 
\[
	 	\vartheta\in C_0^\infty (\omega  ),\quad
		0\le \vartheta(x)\le 1,\quad
		\vartheta \equiv1\quad\hbox{in}\quad\omega  _2. 
\]

	This way, we have that
\begin{equation}\label{eq:cut-offBZK}
	s^3\lambda ^4 \int_0^T \!\!\! \int_{\mathcal{O}_{\om_2}(t)}   \xi ^3 |\varphi|^2 e^{-2s\alpha} dxdt  
	\leq s^3\lambda ^4 \iint_Q \zeta   \xi ^3 |\varphi|^2 e^{-2s\alpha}  dxdt.
\end{equation}
	Then, using $ \eqref{adj:BZK}_2$, we obtain
\begin{eqnarray*}
	s^3\lambda ^4 \iint_Q \zeta \xi^3 |\varphi|^2 e^{-2s\alpha} dxdt 
	&=&  s^3\lambda ^4 \iint_Q \zeta\xi ^3\varphi\psi e^{-2s\alpha}  dxdt
	+s^3\lambda ^4 \iint_Q \zeta  \xi ^3  \varphi(-\psi_t) e^{-2s\alpha} dxdt   \\
	\noalign{\smallskip}\dis
	&:=& M_1 + M_2.
\end{eqnarray*}

	Now, let us estimate the terms $M_1$ and $M_2$. From Cauchy-Schwarz inequality, we have
\begin{equation}\label{WXYZ1}
	|M_1| \le \delta  s^3\lambda ^4  \iint_Q \xi ^3  |\varphi|^2 e^{-2s\alpha} dxdt  
	+C_{\delta}s^3\lambda ^4 \int_0^T \!\!\! \int_{\mathcal{O}_{\om}(t)} \xi^3  |\psi|^2e^{-2s\alpha}  dxdt, 
\end{equation}
	for any $\delta >0$. 

	On the other hand, integrating by parts with respect to  $t$ in $M_2$, it yields
\begin{eqnarray*}
	M_2 &=&  s^3\lambda ^4 \iint_Q\zeta\xi^3  \varphi_t\psi e^{-2s\alpha}  dxdt   
	+s^3\lambda ^4 \iint_Q\zeta(3\xi^2 \xi_t - 2 s\alpha_t\xi ^3) \varphi\psi e^{-2s\alpha}  dxdt  \\
	&&\qquad +  s^3\lambda ^4\iint_Q\nabla\vartheta(X(x,0,t))\cdot 
  	 {\partial X\over\partial t}(x,0,t) \xi ^3\varphi\psi e^{-2s\alpha}  dxdt\\
   	&:=&M_2^1+M_2^2-M_2^3. 
\end{eqnarray*}
	
	For $M_2^1$, we notice that, for every $\delta >0$, we obtain
\begin{equation}\label{est:A21}
	\begin{alignedat}{2}
		|M_2^1| 
		\le&~ \delta s\lambda^2 \int_Q\xi^*  |\varphi_t|^2 e^{-2s\alpha^*} dxdt
		+C_{\delta}s^6\lambda^6 \int_0^T \!\!\! \int_{\mathcal{O}_{\om}(t)} \xi^6  |\psi|^2
		e^{-4s\alpha + 2s\alpha^*}  dxdt.
	\end{alignedat}
\end{equation}
	
	Since $|\xi_t| +|\alpha_t| \leq s\xi  ^2$, for every $\delta >0$, we infer that
\begin{equation}\label{est:A22}
	\begin{alignedat}{2}
	|M_2^2| 
	\le&~\delta s^3\lambda^4\iint_Q\xi ^{3}|\varphi|^2 e^{-2s\alpha} dxdt
	+C_\delta s^7\lambda^4\int_0^T \!\!\! \int_{\mathcal{O}_{\om}(t)} \xi ^7  |\psi|^2e^{-2s\alpha}  dxdt .
	\end{alignedat}
\end{equation}
	
	Finally, $M_2^3$ is estimated as the term $M_1$:
\begin{equation}\label{est:A23}
	|M_2^3| \leq \delta s^3\lambda^4  \iint_Q \xi ^3 |\varphi|^2  e^{-2s\alpha} dxdt 
	+ C_{\delta} s^3\lambda^4 \int_0^T \!\!\! \int_{\mathcal{O}_{\om}(t)} \xi ^3 |\psi|^2 e^{-2s\alpha} dxdt.
\end{equation}
	for any $\delta >0$. 

Combining \eqref{estnew:elliptic+ode} and  \eqref{eq:cut-offBZK}-\eqref{est:A22}, and absorbing the lower order terms,  we conclude the  proof of Theorem \ref{thm:Carl_BZK}.

\end{proof}

\begin{remark}
	Notice that if the initial data $\psi_T \in H^1_0(\Omega)$, then $\psi$ and $\varphi$ satisfy 
\begin{equation}\label{pseudoZ}
	\left |   
		\begin{array}{lcl}
			-\varphi_t  + \Delta \varphi_t  - \Delta \varphi   = 0	&  \mbox{in}	&Q,		\\
			\noalign{\smallskip}\dis
			\varphi = 0 						& \mbox{on}	&\Sigma,	\\
			\noalign{\smallskip}\dis
			\varphi(\cdot,T)=\varphi_T:= (I-\Delta)^{-1}\psi_T			&  \mbox{in}	&\Omega	
		\end{array}
	\right. 
\end{equation}
	and 
\begin{equation}\label{pseudoXi}
	\left |   
		\begin{array}{lcl}
			-\psi_t  + \Delta \psi_t  - \Delta \psi  = 0		&  \mbox{in}&	Q,		\\
			\noalign{\smallskip}\dis
			\psi = 0 							& \mbox{on}&	\Sigma,	\\
			\noalign{\smallskip}\dis
			\psi(\cdot,T)= \psi_T						&  \mbox{in}&	\Omega,	
		\end{array}
	\right. 
\end{equation}
	respectively. In spite of seeming that the equations are decoupled, one can observe that, their are coupled by the initial data once
	they satisfy an elliptic equation, i.e. $\varphi_T-\Delta \varphi_T=\psi_T$. 
\end{remark}

\begin{remark}
	If $ u_0 \in H^1_0(\Om)$, there is another decomposition of \eqref {eq:BZK}, namely
\begin{equation}\label{systempseudoF}
\left |   
\begin{array}{ll}
u_t  = v &     \mbox{in}  \  \ Q,  \\
 v - \Delta v - \Delta u  = f 1_{\omega(t)}  &     \mbox{in}  \  \ Q, \\
u = 0     &    \mbox{on}  \  \   \Sigma, \\
u(0) = u_0 &    \mbox{in}    \  \  \Omega,
\end{array}
\right. 
\end{equation}
which leads to the adjoint system
\begin{equation}\label{systempseudo1F}
\left |   
\begin{array}{ll}
-\xi_t + \xi = z    &     \mbox{in}  \  \ Q,  \\
z - \Delta z    = \xi &     \mbox{in}  \  \ Q, \\
z= 0     &    \mbox{on}  \  \   \Sigma, \\
\xi(T) = \xi_T &    \mbox{in}    \  \  \Omega.
\end{array}
\right. 
\end{equation}
However, here $\xi_T \in H^{-1}(\Omega)$, for which we do not know how to prove a Carleman inequality for the ODE part of the decomposition.
\end{remark}



 \subsection{Benjamin-Bona-Mahony with moving controls}\label{secBBM}
	In this section we prove the null controllability for the Benjamin-Bona-Mahony equation \eqref{eq:BBM}. 
	Here, we use a slightly  different proof as the one given in the previous section for the
	Barenblatt-Zheltov-Kochina equation.

	Theorem \ref{teo_pos_bbm} is a consequence of the following result:
\begin{proposition}\label{nullBBM_sys}
Suppose that Assumption \ref{AssumptionMC} holds and let $T>0$, $\om\subset\mathbb{R}^N$, with $\overline{\om}_0\subset \om$. Then, for any $z_0 \in L^2(\Omega)$, there exists a moving control $v \in L^2(\mathcal{O}_\om)$ 
	such that the solution  
\begin{equation}\label{system:BBM}
	\left |   
		\begin{array}{lcl}
			y  - \Delta y  = z			&\mbox{in}&	Q,		\\
			\noalign{\smallskip}\dis
			z_t + \nabla\cdot(A(x,t)y)  =  v \chi_{\mathcal{O}_\om}	&\mbox{in}&	Q,		\\
			\noalign{\smallskip}\dis
			y = 0						&\mbox{on}&	\Sigma, 	\\
			\noalign{\smallskip}\dis
			z(\cdot,0) = z_0					&\mbox{in}&	\Omega.
		\end{array}
	\right. 
\end{equation}
satisfies 
$$
	y(\cdot,T)=z(\cdot,T)=0\quad \hbox{in}\quad \Omega.
$$
\end{proposition}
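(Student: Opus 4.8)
The plan is to mirror the proof of Proposition~\ref{nullBZK_sys}: pass to an observability inequality by duality and establish it through a global Carleman estimate. First, since $y=(I-\Delta)^{-1}z$ with homogeneous Dirichlet data, the condition $y(\cdot,T)=z(\cdot,T)=0$ reduces to $z(\cdot,T)=0$, and the control acts only on the evolution equation. Pairing $\varphi$ with the elliptic equation and $\psi$ with the second equation of \eqref{system:BBM}, and integrating $\nabla\cdot(Ay)$ by parts to produce $-A\cdot\nabla\psi$, the formal adjoint system is
\[
\left|
\begin{array}{lcl}
\varphi-\Delta\varphi = A\cdot\nabla\psi & \mbox{in} & Q,\\
-\psi_t = \varphi & \mbox{in} & Q,\\
\varphi = 0 & \mbox{on} & \Sigma,\\
\psi(\cdot,T)=\psi_T & \mbox{in} & \Omega,
\end{array}
\right.
\]
which is the decomposition of the single adjoint equation $-\psi_t+\Delta\psi_t-A\cdot\nabla\psi=0$ already met in Theorem~\ref{lacknull_BBM}. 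As in Section~\ref{secBZK}, controllability is then equivalent to the observability inequality $\|\psi(\cdot,0)\|_{L^2(\Omega)}^2\le C\int_0^T\!\!\int_{\mathcal O_\om(t)}|\psi|^2\,dxdt$, which by a standard argument follows from a Carleman estimate of the form \eqref{E6T0}.

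To prove that estimate I would combine the two ingredients of Section~\ref{sec:pos}. The genuinely new feature, compared with \eqref{eq:BZK}, is that the coupling source $A\cdot\nabla\psi$ carries one spatial derivative, so Lemma~\ref{lemma:elliptic} no longer applies to the elliptic equation. Instead I would write
\[
A\cdot\nabla\psi-\varphi=\nabla\cdot(A\psi)-(\nabla\cdot A)\psi-\varphi=:g+\nabla\cdot G,\qquad G=A\psi,\quad g=-(\nabla\cdot A)\psi-\varphi,
\]
so that $-\Delta\varphi=g+\nabla\cdot G$, and apply the $H^{-1}$ elliptic Carleman inequality of Lemma~\ref{carleman:elliptic_H-1} pointwise in $t$. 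Simultaneously I would apply the ODE Carleman inequality of Lemma~\ref{lemma:ode} to $q=\psi$, using $\psi_t=-\varphi$ to replace $|q_t|^2$ by $|\varphi|^2$. Setting $\tau=sr(t)$, so that $\tau\gamma=s\xi$, multiplying by $e^{-2sr(t)e^{2\lambda\|\eta\|_\infty}}$ (which turns $e^{2\tau\gamma}$ into $e^{-2s\alpha}$) and integrating in time turns both pointwise inequalities into global ones on $Q$.

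I would then add the two global inequalities and, for $\lambda$ and $s$ large, absorb the crossed and lower order contributions: the $\iint_Q|\varphi|^2e^{-2s\alpha}$ on the right of the ODE estimate is dominated by the leading $s^2\lambda^2\xi^2|\varphi|^2$ term from Lemma~\ref{carleman:elliptic_H-1}, while the $(s\xi)|\psi|^2$ produced by the $|G|^2$ term is controlled, after a factor $\lambda^{-2}$, by the $s\lambda^2\xi|\psi|^2$ term of the ODE estimate. There remains the local integral of $\varphi$ on the right-hand side; since here $\varphi=-\psi_t$, I would localize it with the cut-off $\zeta$ of \eqref{zeta}, replace one factor $\varphi$ by $-\psi_t$ and integrate by parts in time exactly as for the term $M_2$ in the proof of Theorem~\ref{thm:Carl_BZK}, obtaining terms in $\psi$, $\varphi$ and $\varphi_t$. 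To control the $\varphi_t$ contributions I would differentiate the elliptic equation in time and use $\psi_t=-\varphi$, which gives
\[
\varphi_t-\Delta\varphi_t=A_t\cdot\nabla\psi-A\cdot\nabla\varphi\quad\text{in }Q,\qquad \varphi_t=0\quad\text{on }\Sigma,
\]
and then a weighted energy estimate in the manner of \eqref{eq:BZKenergy} bounds the $\varphi_t$ norm by the observation term together with quantities already on the left.

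The main difficulty I anticipate is the bookkeeping of the weights when closing the estimate, precisely because Lemma~\ref{carleman:elliptic_H-1} provides weaker powers of $s\xi$ on $\varphi$ than the $L^2$ inequality used for the Barenblatt--Zheltov--Kochina equation; one must verify that these weaker powers still dominate every term created by the coupling and by the integration by parts in time. A related delicate point is the source $A_t\cdot\nabla\psi$ in the equation for $\varphi_t$: to avoid requiring any bound on $\nabla\psi$, it must again be placed in $H^{-1}$ via $A_t\cdot\nabla\psi=\nabla\cdot(A_t\psi)-(\nabla\cdot A_t)\psi$, so that its $H^{-1}$ norm is controlled by $\|\psi\|_{L^2}$ alone; this is where the regularity $A\in C^\infty(\overline Q)$ is used. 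Once these absorptions are checked, the Carleman estimate, the observability inequality and hence Proposition~\ref{nullBBM_sys} follow.
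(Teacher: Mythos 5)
Your proposal is correct and follows essentially the same route as the paper: duality to the adjoint system \eqref{adj:BBM}, Lemma~\ref{lemma:ode} for the ODE part, Lemma~\ref{carleman:elliptic_H-1} applied after writing $A\cdot\nabla\psi$ in divergence form, the substitution $\tau=sr(t)$ to glue the two estimates, the time-differentiated elliptic equation for $\varphi_t$, and the $M_2$-style integration by parts in time (with the cut-off $\zeta$) to remove the local integral of $\varphi$. The only point you gloss over is that the paper's version of the elliptic estimate also produces a local $|\nabla\varphi|^2$ term, which it eliminates via the weak formulation (the $B_1$, $B_2$ decomposition), but this does not change the substance of the argument.
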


	Once more, one can see that Proposition \ref{nullBBM_sys} is equivalent to find $C>0$ such that 
	\begin{equation}\label{eq:obs_bbm}
	\|\psi(\cdot,0)\|^2_{ L^2(\Omega)} \leq C \int^T_0\!\!\!\int_{\mathcal{O}_\om(t)}|\psi|^2dxdt,\quad \forall\psi_T \in L^2(\Omega),
\end{equation}
	where $(\varphi,\psi)$ solves
\begin{equation}\label{adj:BBM}
	\left |   
		\begin{array}{lcl}
			\varphi - \Delta \varphi = A\cdot\nabla\psi 	&\mbox{in}&	Q,  		\\
			\noalign{\smallskip}\dis
			- \psi_t  =\varphi 	&\mbox{in}&	Q, 		\\
			\noalign{\smallskip}\dis
			\varphi= 0     			&\mbox{on}&	\Sigma, 	\\
			\noalign{\smallskip}\dis
			\psi(T) = \psi_T 		&\mbox{in}&	\Omega.
		\end{array}
	\right. 
\end{equation}

	Inequality \eqref{eq:obs_bbm} is a consequence of the following Carleman inequality:
\begin{theorem}\label{thm:Carl_BBM}
	Under Assumption \ref{AssumptionMC}. For any $T>0$, $\om\subset\mathbb{R}^N$, with $\overline{\om}_0\subset \om$, there exist positive constants $s_0$, $\lambda_0\geq 1$ and~$C$, only depending on $\Omega$ and $\omega$, such that, 
	for any $\psi _T\in L^2(\Omega)$, the solution $(\varphi,\psi)$ to the adjoint system \eqref{adj:BBM} satisfies:
\begin{align*}
	&\iint_Qe^{-2s\alpha} [ |\nabla \varphi|^2    +  s^2\lambda ^2\xi ^2 |\varphi|^2 ]dxdt 
		+  s\lambda ^2  \iint_Q\xi |  \psi |^2 e^{-2s\alpha}dxdt \\
		&+\int_Q [s\lambda ^2 \xi^*|\nabla \varphi_t|^2 + s\lambda ^2\xi^* |\varphi_t|^2  ]e^{-2s\alpha^*} dxdt\\
	& \le C s^6\lambda ^2\int_0^T \!\!\! \int_{\mathcal{O}_\om(t)}    \xi ^6 e^{-4s\alpha + 2s\alpha^*}  |\psi|^2  dxdt.
\end{align*}
	for all  $s \geq  s_0(T + T^2) $ and for all $\lambda \geq \lambda_0$.
\end{theorem}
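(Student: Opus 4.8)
The plan is to mimic the proof of Theorem~\ref{thm:Carl_BZK}, exploiting the decomposition of \eqref{adj:BBM} into the ordinary differential equation $-\psi_t=\varphi$ and the elliptic equation $\varphi-\Delta\varphi=A\cdot\nabla\psi$, but with two modifications forced by the fact that the coupling now occurs through the \emph{gradient} $A\cdot\nabla\psi$ rather than through $\psi$ itself. First I would apply Lemma~\ref{lemma:ode} to the ODE, taking $q=\psi$ and using $q_t=\psi_t=-\varphi$, which yields the exact analogue of \eqref{est:ode},
\[
  s\lambda^2\iint_Q\xi|\psi|^2e^{-2s\alpha}\,dxdt\le C\Big(\iint_Q|\varphi|^2e^{-2s\alpha}\,dxdt+s^2\lambda^2\int_0^T\!\!\!\int_{\mathcal{O}_{\om_2}(t)}\xi^2|\psi|^2e^{-2s\alpha}\,dxdt\Big).
\]
For the elliptic part I cannot use Lemma~\ref{lemma:elliptic} as in the BZK case, since the source carries a derivative of $\psi$; instead I would rewrite $-\Delta\varphi=\nabla\cdot(A\psi)-(\nabla\cdot A)\psi-\varphi$ and apply the $H^{-1}$ Carleman inequality of Lemma~\ref{carleman:elliptic_H-1} with $z=\varphi$, $G=A\psi$ and $g=-(\nabla\cdot A)\psi-\varphi$. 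Because $A\in C^\infty(\overline{Q})$, one has $|G|^2\le C|\psi|^2$ and $|g|^2\le C(|\psi|^2+|\varphi|^2)$; the $|\varphi|^2$ contribution carries the small factor $\lambda^{-2}(\tau\gamma)^{-1}$ and is absorbed by the left-hand side, while the leading source term is $(\tau\gamma)|G|^2\le C(\tau\gamma)|\psi|^2$.

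After the substitution $\tau=s\,r(t)$ (so that $\tau\gamma=s\xi$), multiplying by the appropriate time-dependent exponential and integrating in $t$, exactly as in the passage leading to \eqref{est:elliptic}, I would obtain
\[
  \iint_Q\big[|\nabla\varphi|^2+s^2\lambda^2\xi^2|\varphi|^2\big]e^{-2s\alpha}\,dxdt\le C\Big(s\iint_Q\xi|\psi|^2e^{-2s\alpha}\,dxdt+s^2\lambda^2\int_0^T\!\!\!\int_{\mathcal{O}_{\om_2}(t)}\xi^2|\varphi|^2e^{-2s\alpha}\,dxdt\Big).
\]
Adding this to the ODE estimate and taking $\lambda,s$ large, the global term $\iint_Q|\varphi|^2e^{-2s\alpha}$ is absorbed by $s^2\lambda^2\iint_Q\xi^2|\varphi|^2e^{-2s\alpha}$ (recall $\xi\ge c>0$) and $s\iint_Q\xi|\psi|^2e^{-2s\alpha}$ by $s\lambda^2\iint_Q\xi|\psi|^2e^{-2s\alpha}$, giving a combined estimate whose right-hand side contains only the local integrals of $|\varphi|^2$ and $|\psi|^2$ over $\mathcal{O}_{\om_2}(t)$.

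Next I would recover the $\varphi_t$ norms. Differentiating $\varphi-\Delta\varphi=A\cdot\nabla\psi$ in time and using $\psi_t=-\varphi$ shows that $\varphi_t$ solves
\[
  \varphi_t-\Delta\varphi_t=A_t\cdot\nabla\psi-A\cdot\nabla\varphi\ \ \text{in}\ Q,\qquad \varphi_t=0\ \ \text{on}\ \Sigma.
\]
I expect this step to be the main obstacle: unlike the BZK equation \eqref{adj:BZKvarphi}, whose source was simply $\psi-\varphi$, here the right-hand side contains $\nabla\psi$, a quantity not controlled by the left-hand side. The remedy is to carry out the energy estimate with the purely time-dependent multiplier $s\lambda^2\xi^*\varphi_te^{-2s\alpha^*}$; since $\xi^*$ and $\alpha^*$ depend only on $t$, the spatial integration by parts produces no weight-derivative terms, leaving $s\lambda^2\xi^*(|\varphi_t|^2+|\nabla\varphi_t|^2)e^{-2s\alpha^*}$ on the left. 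In the term $\iint_Q(A_t\cdot\nabla\psi)\,\varphi_t\,s\lambda^2\xi^*e^{-2s\alpha^*}\,dxdt$ I would integrate by parts in space to transfer the derivative onto $A_t\varphi_t$, producing $\psi\,(\nabla\cdot A_t)\varphi_t$ and $\psi\,A_t\cdot\nabla\varphi_t$; the former is handled by Cauchy--Schwarz and the latter absorbed by the $|\nabla\varphi_t|^2$ term. The remaining term $\iint_Q(A\cdot\nabla\varphi)\varphi_t\,s\lambda^2\xi^*e^{-2s\alpha^*}$ is estimated against $\iint_Q|\nabla\varphi|^2e^{-2s\alpha}$ (using $e^{-2s\alpha^*}\le e^{-2s\alpha}$), already present on the left-hand side of the combined estimate. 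This produces the bound for $\int_Q[s\lambda^2\xi^*|\nabla\varphi_t|^2+s\lambda^2\xi^*|\varphi_t|^2]e^{-2s\alpha^*}$, the analogue of \eqref{eq:BZKenergy}.

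Finally, to remove the local integral of $|\varphi|^2$ I would proceed as in \eqref{eq:cut-offBZK}--\eqref{est:A23}: introduce the cut-off $\zeta(x,t)=\vartheta(X(x,0,t))$ of \eqref{zeta}, bound the integral over $\mathcal{O}_{\om_2}(t)$ by the weighted integral of $\zeta$ over $Q$, and use $\varphi=-\psi_t$ to write $s^2\lambda^2\iint_Q\zeta\,\xi^2|\varphi|^2e^{-2s\alpha}=s^2\lambda^2\iint_Q\zeta\,\xi^2\varphi(-\psi_t)e^{-2s\alpha}$. Note that, the ODE having no zeroth-order term, there is no $M_1$-type piece $\varphi\psi$ as in the BZK case. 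Integrating by parts in $t$ (the temporal boundary terms vanishing because $e^{-2s\alpha}\to0$ as $t\to0,T$) splits this into three pieces: one with $\varphi_t\psi$, absorbed via the freshly obtained $\varphi_t$ estimate; one carrying the factor $2\xi\xi_t-2s\alpha_t\xi^2$, which by $|\xi_t|+|\alpha_t|\le Cs\xi^2$ and Cauchy--Schwarz leaves $\delta\,s^2\lambda^2\iint_Q\xi^2|\varphi|^2e^{-2s\alpha}$ (absorbed) plus the dominant local term $C_\delta\,s^6\lambda^2\int_0^T\!\int_{\mathcal{O}_\om(t)}\xi^6|\psi|^2$; and one with $\partial_t\zeta=\nabla\vartheta(X(x,0,t))\cdot\partial_tX(x,0,t)$, handled like an $M_1$-type term and of lower order. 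Collecting the largest powers of $s$ and $\lambda$, and replacing $e^{-2s\alpha}$ by $e^{-4s\alpha+2s\alpha^*}$ where needed, gives precisely the right-hand side $Cs^6\lambda^2\int_0^T\!\int_{\mathcal{O}_\om(t)}\xi^6e^{-4s\alpha+2s\alpha^*}|\psi|^2$, which completes the proof.
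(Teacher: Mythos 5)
Your overall strategy coincides with the paper's: Lemma \ref{lemma:ode} for the ODE part, Lemma \ref{carleman:elliptic_H-1} applied to $-\Delta\varphi=-\varphi-(\nabla\cdot A)\psi+\nabla\cdot(A\psi)$, the substitution $\tau=s\,r(t)$ followed by integration in time, an estimate for $\varphi_t$ through the time-differentiated elliptic equation, and the cut-off/integration-by-parts in $t$ that trades the local integral of $\varphi$ for one of $\psi$. Your final bookkeeping (no $M_1$-type term because the ODE here has no zeroth-order part; dominant contribution $s^6\lambda^2\xi^6$ coming from the $\alpha_t$ piece) matches the stated right-hand side, and you in fact supply more detail for this last step than the paper, which only refers to ``similar arguments to Theorem \ref{thm:Carl_BZK}.''

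Two points need attention. First, your treatment of the term $A\cdot\nabla\varphi$ in the $\varphi_t$ estimate fails as written: after Cauchy--Schwarz it leaves $C_\delta\, s\lambda^2\xi^* e^{-2s\alpha^*}|\nabla\varphi|^2$, and since $\alpha^*-\alpha=\xi-\xi^*$ vanishes wherever $\eta(\cdot,t)$ attains its minimum, the ratio $s\lambda^2\xi^* e^{-2s(\alpha^*-\alpha)}$ is large there; this term cannot be absorbed by $\iint_Q|\nabla\varphi|^2e^{-2s\alpha}$, whose coefficient on the left-hand side is only $1$. The paper avoids this by keeping the source of \eqref{adj:BBMvarphi} in divergence form, $\nabla\cdot(A_t\psi-A\varphi)+\varphi\nabla\cdot A-\psi\nabla\cdot A_t$, so that the energy estimate involves only $\|\psi\|_{L^2}$ and $\|\varphi\|_{L^2}$ and can then be closed with \eqref{est:elliptic_BBM_no_grad}; the same integration by parts you already perform on $A_t\cdot\nabla\psi$ repairs your argument. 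Second, you invoke Lemma \ref{carleman:elliptic_H-1} with a local observation of $|\varphi|^2$ alone, whereas the paper, when it applies that lemma inside the proof, retains an additional local term $\int_{\mathcal{O}_{\om_2}(t)}e^{2\tau\gamma}|\nabla\varphi|^2\,dx$ and then spends a substantial part of the proof (the terms $B_1$, $B_2^1$, $B_2^2$, $B_2^3$, obtained from the weak formulation of $\eqref{adj:BBM}_1$ tested against $\zeta e^{-2s\alpha}\varphi$) removing it. If the lemma truly holds in the form stated you are entitled to skip that step, but the elimination of the local gradient observation is where most of the actual work in the paper's proof lies, and your proposal is silent on it.
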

\begin{proof}
	We begin applying the Carleman inequality given by Lemma \ref{lemma:ode} to $\eqref{adj:BBM}_2$, which gives 
\begin{equation}\label{est:ode_BBM}
	\iint_Q s\lambda ^2  \xi |  \psi |^2 e^{-2s\alpha }dxdt \leq \iint_Q |\varphi|^2e^{-2s\alpha }dxdt 
	+ \int_0^T\!\!\!\!\int_{\mathcal{O}_{\om_2}(t)} s^2\lambda ^2  \xi^2 |  \psi |^2 e^{-2s\alpha }dxdt.
\end{equation}

	Next, noticing that 
\begin{equation*}
	A\cdot\nabla\psi=\nabla\cdot(A\psi)-\psi\nabla\cdot A
\end{equation*}

	and applying the Carleman inequality given in Lemma \ref{carleman:elliptic_H-1} for $\eqref{adj:BBM}_1$,
\[
\begin{alignedat}{2}
	\tau^2\lambda^2 \ii e^{2\tau\gamma}\gamma^2 |\varphi|^2\, dx+\ii e^{2\tau\gamma}|\nabla\varphi|^2\, dx \leq&~ 
	C \left( {1\over\tau\lambda^2}\ii e^{2\tau\gamma}{|\psi|^2\over\gamma} dx
	+ \tau \ii e^{2\tau\gamma}\gamma|\psi|^2 dx\right.\\
	&\left.+\,\tau^2\lambda^2 \int_{\mathcal{O}_{\om_2}(t)} \!\!\!\!\!\!e^{2\tau\gamma}\gamma^2 |\varphi|^2\, dx
	+\int_{\mathcal{O}_{\om_2}(t)} \!\!\!\!\!\!e^{2\tau\gamma}|\nabla\varphi|^2\, dx  \right ),
\end{alignedat}
\]
	for $\lambda\geq\lambda_0$ and $\tau\geq \tau_0$.
	
	As before, to connect this elliptic estimate with \eqref{est:ode_BBM}, we set
$$
	\tau=sg(t),
$$
 multiply by
$$
	e^{-2sg(t)e^{2\|\eta\|_\infty}}
$$
	and  integrate with respect to $t$ in $(0,T)$. The last choice of $\tau$ will be greater 
	that $\tau_0$ whatever we take $s_0\geq \tau_0$, hence we deduce that 
\begin{equation}\label{est:elliptic_BBM0}
\begin{alignedat}{2}
	&s^2\lambda^2 \iint_Q e^{-2s\alpha}\xi^2 |\varphi|^2\, dxdt+\iint_Q e^{-2s\alpha}|\nabla\varphi|^2\, dxdt\\
	& \leq
	C \left( {1\over s\lambda^2}\iint_Q e^{-2s\alpha}{|\psi|^2\over\xi}\, dxdt
	+ s\iint_Q e^{-2s\alpha}\xi|\psi|^2\, dxdt\right.\\
	&\left.\hspace{0.5cm}
	+\,s^2\lambda^2 \int_0^T\!\!\!\!\int_{\mathcal{O}_{\om_2}(t)} \!\!\!\!\!\!e^{-2s\alpha}\xi^2 |\varphi|^2\, dxdt
	+\int_0^T\!\!\!\!\int_{\mathcal{O}_{\om_2}(t)} \!\!\!\!\!\!e^{-2s\alpha}|\nabla\varphi|^2\, dxdt  \right ).
\end{alignedat}
\end{equation}

	Adding \eqref{est:ode_BBM} and  \eqref{est:elliptic_BBM0}, and absorbing the lower order terms by taking $\lambda$ large enough,  we get
\begin{equation*}
\begin{alignedat}{2}
	& s\lambda ^2\iint_Q   e^{-2s\alpha }\xi |  \psi |^2dxdt+s^2\lambda^2 \iint_Q e^{-2s\alpha}\xi^2 |\varphi|^2\, dxdt+\iint_Q e^{-2s\alpha}|\nabla\varphi|^2\, dxdt\\
	& \leq
	C \left(  s^2\lambda ^2\int_0^T\!\!\!\!\int_{\mathcal{O}_{\om_2}(t)}  e^{-2s\alpha }\xi^2 |  \psi |^2 dxdt
	+\,s^2\lambda^2 \int_0^T\!\!\!\!\int_{\mathcal{O}_{\om_2}(t)}\!\!\!\!\!\!e^{-2s\alpha}\xi^2 |\varphi|^2\, dxdt\right.\\
	&\left.\hspace{0.5cm}
	+\int_0^T\!\!\!\!\int_{\mathcal{O}_{\om_2}(t)}\!\!\!\!\!\!e^{-2s\alpha}|\nabla\varphi|^2\, dxdt  \right ).
\end{alignedat}
\end{equation*}

Now, let us introduce $\om_3$ such that $\overline\om_2\subset\om_3\subset\overline\om_3\subset\om$ and the function 
\begin{equation*}
	\zeta (x,t) := \vartheta(X(x,0,t) ),
\end{equation*}
	where $\vartheta$ is a cut-off function satisfying 
\[
	\vartheta\in C_0^\infty (\omega_3  ), \quad 	0\le \vartheta(x)\le 1, \quad	\vartheta \equiv1\quad\hbox{in}\quad\omega_2. \label{AW3}
\]

This way, we have that
\begin{equation}\label{eq:cut-offBBM}
	\int_0^T\!\!\!\!\int_{\mathcal{O}_{\om_2}(t)} \!\!\!\!\!\!e^{-2s\alpha}|\nabla\varphi|^2\, dxdt\le 
	\iint_Q \zeta e^{-2s\alpha}|\nabla\varphi|^2\, dxdt.
\end{equation}
	Then, since $\zeta e^{-2s\alpha}\nabla\varphi=\nabla(\zeta e^{-2s\alpha}\varphi)-\nabla(\zeta e^{-2s\alpha})\varphi$,
	we obtain
\begin{eqnarray*}
	\iint_Q \zeta e^{-2s\alpha}|\nabla\varphi|^2\, dxdt
	&=&  \iint_Q \nabla(\zeta e^{-2s\alpha}\varphi)\cdot\nabla\varphi\, dxdt
	-\iint_Q [\nabla(\zeta e^{-2s\alpha})\cdot\nabla\varphi]\varphi\, dxdt   \\
	\noalign{\smallskip}\dis
	&=&  {1\over2}\iint_Q \Delta(\zeta e^{-2s\alpha})|\varphi|^2\, dxdt  +
	\iint_Q \nabla(\zeta e^{-2s\alpha}\varphi)\cdot\nabla\varphi\, dxdt\\
	\noalign{\smallskip}\dis
	&:=& B_1 + B_2.
\end{eqnarray*}

	Now, let us estimate the terms $B_1$ and $B_2$. For that, we use that
$$	
	\Delta(\zeta e^{-2s\alpha}):=e^{-2s\alpha}\left\{\Delta \zeta+4s\lambda\xi \left[\nabla\zeta\cdot\nabla\eta\right]
	+ 2s\lambda\xi\zeta\left[\lambda|\nabla\eta|^2(2s\xi+1)+\Delta\eta\right]\right\},
$$	
	to see that
\begin{equation*}
	B_1\le C\,s^2\lambda^2 \int_0^T\!\!\!\!\int_{\mathcal{O}_{\om_3}(t)} \!\!\!\!\!\!e^{-2s\alpha}\xi^2 |\varphi|^2\, dxdt.
\end{equation*}

	Since $A\cdot\nabla\psi:=\nabla\cdot(A\psi)-\psi\nabla\cdot A\in H^{-1}(\Om)$, the solution for $\eqref{adj:BBM}_1$ satisfies the following weak formulation
\[
	(\varphi,w)+(\nabla \varphi, \nabla w)= -(A \psi, \nabla w)- (\psi\nabla\cdot A, w)\quad \forall w\in H_0^1(\Om).
\]

	Using the previous formulation with $w=\zeta e^{-2s\alpha}\varphi$, we obtain 
\begin{eqnarray*}
	B_2&=&-\iint_Q \zeta e^{-2s\alpha}|\varphi|^2\, dxdt
	-\iint_Q \left(\nabla\cdot A\right)\zeta e^{-2s\alpha}\varphi\psi\, dxdt
	-\iint_Q \psi \left[A\cdot\nabla(\zeta e^{-2s\alpha}\varphi)\right]\, dxdt\\
	&:=&B_2^1+B_2^2+B_2^3.
\end{eqnarray*}

Now, for $B_2^1$, we easily deduce that
\begin{equation*}
		|B_2^1| \le C\,s^2\lambda^2 \int_0^T\!\!\int_{\mathcal{O}_{\om_3}(t)} \!\!\!\!\!\!e^{-2s\alpha}\xi^2 |\varphi|^2\, dxdt.
\end{equation*}

For $B_2^2$, we notice that, for every $\delta >0$, we obtain
\begin{equation*}
	\begin{alignedat}{2}
		|B_2^2| \le&~ \delta s\lambda^2\iint_Q  e^{-2s\alpha}\xi|\psi|^2\, dxdt
		+C_{\delta}\,s^2\lambda^2 \int_0^T\!\!\int_{\mathcal{O}_{\om_3}(t)} \!\!\!\!\!\!e^{-2s\alpha}\xi^2 |\varphi|^2\, dxdt.
	\end{alignedat}
\end{equation*}
	
	Since $\nabla(\zeta e^{-2s\alpha})=e^{-2s\alpha}(\nabla \zeta+2s\lambda \xi\zeta \nabla\eta)$, for every $\delta >0$, 
	we have that
\begin{equation*}
	\begin{alignedat}{2}
	B_2^3 
	\le&~\varepsilon \left(s^2\lambda^2\iint_Q  e^{-2s\alpha}\xi^2|\varphi|^2\, dxdt
	+\iint_Q  e^{-2s\alpha}|\nabla\varphi|^2\, dxdt\right) + C_\delta \,s^2\lambda^2 \int_0^T\!\!\!\!\int_{\mathcal{O}_{\om_3}(t)} \!\!\!\!\!\!e^{-2s\alpha}\xi^2 |\psi|^2\, dxdt.
	\end{alignedat}
\end{equation*}
	
 	This way, we get
\begin{equation}\label{est:elliptic_BBM_no_grad}
\begin{alignedat}{2}
	&s\lambda ^2  \iint_Qe^{-2s\alpha } \xi |  \psi |^2 dxdt+s^2\lambda^2 \iint_Q e^{-2s\alpha}\xi^2 |\varphi|^2\, dxdt+\iint_Q e^{-2s\alpha}|\nabla\varphi|^2\, dxdt\\
	& \leq
	C \left(  s^2\lambda ^2\int_0^T\!\!\int_{\mathcal{O}_{\om_3}(t)} e^{-2s\alpha } \xi^2 |  \psi |^2 dxdt
	+\,s^2\lambda^2 \int_0^T\!\!\int_{\mathcal{O}_{\om_3}(t)} \!\!\!\!\!\!e^{-2s\alpha}\xi^2 |\varphi|^2\, dxdt  \right ).
\end{alignedat}
\end{equation}

	Finally, to estimate the local integral of $\varphi$ in the right-hand side of \eqref{est:elliptic_BBM_no_grad}, we need to have some global integral in $\varphi_t$ in the left-hand side. 
	For that, we first take the time derivative in $\eqref{adj:BBM}_1$, and the fact that 
	$(A\cdot\nabla\psi)_t=\nabla\cdot(A_t\psi +A\psi_t)-\psi_t\nabla\cdot A-\psi\nabla \cdot A_t$, and use the equation $\eqref{adj:BBM}_2$
	 to see that $\varphi_t$ solves the following  elliptic equation 
\begin{equation}\label{adj:BBMvarphi}
	\left |   
		\begin{array}{lcl}
			\varphi_t - \Delta \varphi_t = \nabla\cdot(A_t\psi -A\varphi)+\varphi \nabla\cdot A-\psi\nabla \cdot A_t 
			&     \mbox{in}& Q,  \\
			\noalign{\smallskip}\dis
			\varphi_t= 0     &    \mbox{on} &   \Sigma.
		\end{array}
	\right. 
\end{equation}
	From  \eqref{est:elliptic_BBM_no_grad} and energy estimates  for \eqref{adj:BBMvarphi}, it is not difficult to see that 
\begin{equation}\label{eq:BBMenergy} 
	\begin{alignedat}{2}
	&\int_Q [s\lambda ^2 \xi^*|\nabla \varphi_t|^2 + s\lambda ^2\xi^* |\varphi_t|^2  ]e^{-2s\alpha^*} dxdt \\
 	&\le C \left(  s^2\lambda ^2\int_0^T\!\!\!\!\int_{\mathcal{O}_{\om_3}(t)}e^{-2s\alpha } \xi^2 |  \psi |^2 dxdt
	+\,s^2\lambda^2 \int_0^T\!\!\!\!\int_{\mathcal{O}_{\om_3}(t)} \!\!\!\!\!\!e^{-2s\alpha}\xi^2 |\varphi|^2\, dxdt  \right ).  
	\end{alignedat}
\end{equation}

	Combining  \eqref{est:elliptic_BBM_no_grad} and  \eqref{eq:BBMenergy}, we get 
\begin{equation}\label{est:elliptic_BBM_with_temp}
\begin{alignedat}{2}
		&\iint_Qe^{-2s\alpha} [ |\nabla \varphi|^2    +  s^2\lambda ^2\xi ^2 |\varphi|^2 ]dxdt 
		+  s\lambda ^2  \iint_Q\xi |  \psi |^2 e^{-2s\alpha}dxdt \\
		&+\int_Q [s\lambda ^2 \xi^*|\nabla \varphi_t|^2 + s\lambda ^2\xi^* |\varphi_t|^2  ]e^{-2s\alpha^*} dxdt\\
		\le &~C \left(  s^2\lambda ^2\int_0^T\!\!\!\!\int_{\mathcal{O}_{\om_3}(t)} e^{-2s\alpha } \xi^2 |  \psi |^2 dxdt
	+\,s^2\lambda^2 \int_0^T\!\!\!\!\int_{\mathcal{O}_{\om_3}(t)}\!\!\!\!\!\!e^{-2s\alpha}\xi^2 |\varphi|^2\, dxdt  \right ).
	\end{alignedat}
\end{equation}
 
	 To finish the proof we estimate the local integral of $\varphi$ by using that $\eqref{adj:BBM}_2$
	 and similar arguments to the ones in the proof of Theorem \ref{thm:Carl_BZK}.

\end{proof}


 \noindent\textbf{Acknowledgements}.
We would like to thanks Prof. G. Lebeau for several discussion about this work. Also, the authors thanks Prof. S. Ervedoza for point out references \cite{Macia, Debayan}.



\appendix


\section{Carleman inequality for the Laplace operator}\label{App:mov_lapla}

	In this section we give the main ideas for the proof of Lemma \ref{lemma:elliptic}. We borrow the ideas  from \cite{CSZR,FI}. 
	
	First, we fix a time $t\in[0,T]$,  set the weight $\gamma(x,t) := e^{\lambda\eta(x,t)}$ and consider the function $w(x,t):=e^{\tau\gamma(x,t)}z(x,t)$.
	 	 
	 Then, we have
\[
	e^{\tau\gamma } \Delta(e^{-\tau\gamma } w ) = \mathcal{M}_1 w - \mathcal{M}_2w,
\]
where 
\[
	\mathcal{M}_1w:=\Delta w+\tau^2|\nabla\gamma|^2w
\]
	is the self-adjoint part of the operator and 
\[
	\mathcal{M}_2w:=2\tau\nabla\gamma\cdot\nabla w+\tau\Delta\gamma w
\]
	is its skew-adjoint part, respectively. 
	
	It follows that 
\be\label{B:decomp}
	\|e^{\tau\gamma}\Delta z\|^2_{L^2(\Om)}=\|\mathcal{M}_1 w\|^2_{L^2(\Om)}+\|\mathcal{M}_ 2w\|^2_{L^2(\Om)}-2(\mathcal{M}_1 w,\mathcal{M}_2 w)_{L^2(\Om)}.
\ee

	The idea of the proof is to analyze the term $(\mathcal{M}_1 w,\mathcal{M}_2 w)_{L^2(\Om)}$. Indeed, a straightforward computation gives
\begin{align*}
	-2(\mathcal{M}_1 w,\mathcal{M}_2 w)_{L^2(\Om)} =&~4\tau  \sum\limits_{i,j=1}^N\ii  \partial^2_{ij}  \gamma \partial _j w\partial _i w\,dx
	-2\tau  \int_{\partial \Omega } {\partial \gamma\over\partial\nu} \left|{\partial w\over\partial \nu}\right|^2 d\Gamma \\
	&-\ii|w|^2 [\tau\Delta ^2\gamma-2\tau^3 \nabla \gamma \cdot \nabla (|\nabla \gamma |^2)]\,dx.
\end{align*}
	Consequently, \eqref{B:decomp} may be rewritten as
\begin{align*}
	\|e^{\tau \gamma} \Delta z\|^2_{L^2(\Om)} =&~\|\mathcal{M}_1 w\|^2_{L^2(\Om)} + \|\mathcal{M}_2 w\| ^2_{L^2(\Om)}
	+4\tau  \sum\limits_{i,j=1}^N\ii \partial^2_{ij} \gamma \partial _j w\partial _i w\,dx\\
	&-2\tau  \int_{\partial \Omega } {\partial \gamma\over\partial\nu} \left|{\partial w\over\partial \nu}\right|^2 d\Gamma
	-\ii|w|^2 [\tau\Delta ^2\gamma-2\tau^3 \nabla \gamma \cdot \nabla (|\nabla \gamma |^2)]\,dx.
\end{align*}

To finish the proof, we need the following claim.
\begin{claim}\label{claim:1} 
	There exist  constants $\lambda _3 >0$, $\tau_3>0$ and  $K\in (0,1)$, independents of $t\in[0,T]$, such that 
	for all $\lambda \ge \lambda _3$ and all $\tau\ge \tau_3$ we have 
 \begin{align}\label{eq:claim}
	\!\!\!\ii |w|^2 [ 2\tau^3 \nabla \gamma \cdot \nabla(|\nabla\gamma |^2)-\tau \Delta ^2 \gamma]dx
	+\!{1\over K}\lambda\tau^3\!\!\int_{\mathcal{O}_{\omega_1}(t)}\!\!\!\!\!\!\!\lambda^3\gamma^3|w|^2dx\ge K\lambda s^3\!\!\ii\!\lambda^3\gamma^3|w|^2dx,
	~\forall t \in [0,T].
\end{align}
\end{claim}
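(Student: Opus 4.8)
The plan is to reduce \eqref{eq:claim} to a pointwise lower bound on the integrand of its first integral, obtained by expanding every derivative of $\gamma=e^{\lambda\eta}$ in powers of $\lambda$, and then to split $\Omega$ according to whether $|\nabla\eta|$ is bounded away from zero. First I would differentiate: since $\gamma=e^{\lambda\eta}$ one has $\nabla\gamma=\lambda\gamma\nabla\eta$, $|\nabla\gamma|^2=\lambda^2\gamma^2|\nabla\eta|^2$, and $\Delta\gamma=\lambda\gamma(\lambda|\nabla\eta|^2+\Delta\eta)$, so that
\[
\nabla\gamma\cdot\nabla(|\nabla\gamma|^2)=\lambda^3\gamma^3(2\lambda|\nabla\eta|^4+\nabla\eta\cdot\nabla(|\nabla\eta|^2)),\qquad \Delta^2\gamma=\lambda^4\gamma|\nabla\eta|^4+\mathcal R_0,
\]
where $\mathcal R_0$ collects terms of order at most $\lambda^3\gamma$. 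Substituting, the bracket in the first term of \eqref{eq:claim} equals $4\lambda^4\tau^3\gamma^3|\nabla\eta|^4+\mathcal R(x,t)$, with $\mathcal R$ gathering all remaining contributions. Because $\eta\in C^\infty(\overline\Omega\times[0,T])$, the coefficients of $\mathcal R$ are bounded uniformly in $(x,t)$, and every summand carries either a strictly lower power of $\lambda$ or only the factor $\tau\gamma$ in place of $\tau^3\gamma^3$; hence $|\mathcal R(x,t)|\le C_1(\lambda^3\tau^3\gamma^3+\lambda^4\tau\gamma)$ uniformly in $t$. Since \eqref{P6} gives $\gamma\ge e^{\frac34\lambda\|\eta\|_\infty}$, one has $\tau\gamma\ll\tau^3\gamma^3$ as $\tau\to\infty$, so for $\lambda,\tau$ large the leading term $4\lambda^4\tau^3\gamma^3|\nabla\eta|^4$ dominates $\mathcal R$ wherever $|\nabla\eta|$ is bounded below.

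Next I would split $\Omega=(\Omega\setminus\overline{X(\omega_1,t,0)})\cup\mathcal O_{\omega_1}(t)$. On the moving region $\Omega\setminus\overline{X(\omega_1,t,0)}$, property \eqref{P1} supplies a lower bound $|\nabla\eta(x,t)|\ge c_0>0$, so there $4\lambda^4\tau^3\gamma^3|\nabla\eta|^4\ge 4c_0^4\lambda^4\tau^3\gamma^3$ absorbs $|\mathcal R|$ and the integrand is $\ge 2c_0^4\lambda^4\tau^3\gamma^3$ for $\lambda\ge\lambda_3$, $\tau\ge\tau_3$. On $\mathcal O_{\omega_1}(t)$, where $|\nabla\eta|$ is merely bounded, I only use the crude bound $|4\lambda^4\tau^3\gamma^3|\nabla\eta|^4+\mathcal R|\le C\lambda^4\tau^3\gamma^3$, which is compensated by the added local term $\frac1K\lambda^4\tau^3\int_{\mathcal O_{\omega_1}(t)}\gamma^3|w|^2\,dx$. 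Consequently the left-hand side of \eqref{eq:claim} is bounded below by
\[
2c_0^4\lambda^4\tau^3\!\!\int_{\Omega\setminus\overline{X(\omega_1,t,0)}}\!\!\!\gamma^3|w|^2\,dx+\Bigl(\frac1K-C\Bigr)\lambda^4\tau^3\!\!\int_{\mathcal O_{\omega_1}(t)}\!\!\!\gamma^3|w|^2\,dx,
\]
and choosing $K\in(0,1)$ small enough that $K\le 2c_0^4$ and $\frac1K-C\ge K$ yields $\ge K\lambda^4\tau^3\ii\gamma^3|w|^2\,dx$, which is the right-hand side of \eqref{eq:claim}.

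The step I expect to be the main obstacle is securing the uniform-in-$t$ lower bound $|\nabla\eta|\ge c_0>0$ on the moving set $\Omega\setminus\overline{X(\omega_1,t,0)}$: the set itself depends on $t$, and near its boundary $\partial X(\omega_1,t,0)$ the nonvanishing asserted in \eqref{P1} is only pointwise. I would derive $c_0$ from the joint continuity of $(x,t)\mapsto\nabla\eta(x,t)$ and of the flow $X$ over the compact cylinder $\overline\Omega\times[0,T]$, exploiting that \eqref{P1} holds on the whole relatively closed set and that an intermediate control region with $X(\omega_1,t,0)\subset\subset X(\omega_2,t,0)$ keeps the good set at a positive distance, uniform in $t$, from the zero set of $\nabla\eta$. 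All other constants ($C_1$, $C$, $c_0$, and the thresholds $\lambda_3,\tau_3$) then depend only on the $C^4$-norm of $\eta$ on this compact set and are therefore independent of $t$, as required.
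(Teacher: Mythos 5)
Your argument is correct and follows essentially the same route as the paper: the paper's proof consists precisely of the two pointwise estimates you derive — a lower bound $2\tau^3\nabla\gamma\cdot\nabla(|\nabla\gamma|^2)-\tau\Delta^2\gamma\ge A\tau^3\lambda^4\gamma^3$ on $\overline\Omega\setminus X(\omega_1,t,0)$ coming from \eqref{P1}, and a crude upper bound of the same order on $X(\omega_1,t,0)$ absorbed by the local term — and you simply supply the expansion in powers of $\lambda$ and the compactness argument for the uniform constant $c_0$ that the paper leaves implicit (including the correct reading of the typo $K\lambda s^3\lambda^3$ as $K\lambda^4\tau^3$).
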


In fact, from this claim and since ${\partial\gamma\over\partial\nu}=\lambda \gamma{\partial\eta\over\partial\nu}\le 0$ on $\partial \Omega\times[0,T]$, see \eqref{P5}, we have
\begin{equation}\label{eq:tgther}
\begin{alignedat}{2}
	&\|\mathcal{M}_1w\|^2_{L^2(\Om)}+\|\mathcal{M}_2 w\|^2_{L^2(\Om)}+K\lambda^4\ii(\tau\gamma)^3|w|^2\,dx\\
	&\le\|e^{\tau\gamma} \Delta z\|^2_{L^2(\Om)} 
	- 4\tau\sum\limits_{i,j=1}^N\ii \partial _j\partial _i \gamma \partial _j w  \partial _i w\,dx
	+K^{-1}\lambda^4\int_{\mathcal{O}_{\omega_1}(t)}(\tau\gamma)^3 |w|^2\,dx.
\end{alignedat}
\end{equation}

The  finishes the proof of Lemma \ref{lemma:elliptic} since one has that
\be
\label{est:zero_order}
\tau \lambda^2 \ii \gamma   |\nabla w|^2  \le C\left(  \tau^{-1} \|\mathcal{M}_1 w\|^2 +  \tau^3 \lambda^4  \ii \gamma ^3 |w|^2 \right) .
\ee

\begin{proof}[Proof of Claim \ref{claim:1}]
%
The claim follows immediately from  the existence of  $\tau_0, \lambda_0 >0$ such that the estimates
\begin{eqnarray*}
  2\tau^3 \nabla \gamma \cdot \nabla 
|\nabla \gamma |^2  - \tau\Delta ^2 \gamma &\ge & A \tau^3 \lambda^4  \gamma^3, \quad t\in [0,T], \ x\in \overline{\Omega} \setminus X(\omega _1,t,0)\\   
| 2\tau^3 \nabla \gamma \cdot \nabla 
|\nabla \gamma |^2 - \tau\Delta ^2 \gamma   |
 &\le &  3 A^{-1} \tau^3 \lambda^4  \gamma^3, \quad t\in [0,T], \ x\in  X(\omega _1,t,0). 
\end{eqnarray*}
holds for every  $\tau \ge \tau_0$ and for all $\lambda \ge \lambda _0$
\end{proof}


\section{Carleman inequality for the Laplace operator in $H^{-1}(\Om)$}\label{App:H-1}



 We give the sketch of the proof of Lemma \ref{carleman:elliptic_H-1}, which is inspired by the arguments in \cite{imanovilovpuel}.

	For every $t \in [0,T]$, we set the function $\gamma(x,t) = e^{\lambda \eta(x,t)}$
	and consider $w(x,t)=e^{\tau\gamma(x,t)}z(x,t)$.
	
	We have the following decomposition
\[
	e^{\tau\gamma} \Delta z  = e^{\tau\gamma } \Delta(e^{-\tau\gamma } w ) = [\Delta w+\tau^2|\nabla\gamma|^2w] - [2\tau\nabla\gamma\cdot\nabla w+\tau\Delta\gamma w]=e^{\tau\gamma }g
	+e^{\tau\gamma }\nabla \cdot G=e^{\tau\gamma }\tilde g+\nabla \cdot(e^{\tau\gamma } G),
\]
	where $\tilde g=g-\tau\nabla \gamma\cdot G$.
	
		Multiplying the previous equation by $w$ and integrating by parts, one can see that:
\[
	\int_\Om |\nabla w|^2\,dx+\tau^2\int_\Om |\nabla\gamma|^2|w|^2\,dx=\int_\Om e^{\tau\gamma }\tilde g w\,dx
	- \int_\Om e^{\tau\gamma }G\cdot\nabla w\,dx
\]	
	which, together with the properties of the weigh function,  gives the result. 

\bibliographystyle{siam}

\begin{thebibliography}{10}

\bibitem{muitobom}
{\sc A.~B. Al'shin, M.~O. Korpusov, and A.~G. Sveshnikov}, {\em Blow-up in
  nonlinear {S}obolev type equations}, vol.~15 of De Gruyter Series in
  Nonlinear Analysis and Applications, Walter de Gruyter \& Co., Berlin, 2011.

\bibitem{goldstein}
{\sc J.~Avrin and J.~Goldstein}, {\em Global existence for the
  benjamin-bona-mahony equation in arbitrary dimensions}, Nonlinear Analysis.
  Theory, Methods \& Applications, 9 (1985), pp.~861--865.

\bibitem{bzk}
{\sc G.~Barenblatt, I.~Zheltov, and I.~Kochina}, {\em Basic concepts in the
  theory of seepage of homogeneous liquids in fissured rocks [strata]}, Journal
  of Applied Mathematics and Mechanics, 24 (1960), pp.~1286 -- 1303.

\bibitem{bbm}
{\sc T.~B. Benjamin, J.~L. Bona, and J.~J. Mahony}, {\em Model equations for
  long waves in nonlinear dispersive systems}, Philos. Trans. Roy. Soc. London
  Ser. A, 272 (1972), pp.~47--78.

\bibitem{CSZR}
{\sc F.~W. Chaves-Silva, L.~Rosier, and E.~Zuazua}, {\em Null controllability
  of a system of viscoelasticity with a moving control}, J. Math. Pures Appl.
  (9), 101 (2014), pp.~198--222.

\bibitem{CSZZ}
{\sc F.~W. Chaves-Silva, X.~Zhang, and E.~Zuazua}, {\em Controllability of
  evolution equations with memory}, SIAM J. Control Optim., 55 (2017),
  pp.~2437--2459.

\bibitem{FI}
{\sc A.~V. Fursikov and O.~Y. Imanuvilov}, {\em Controllability of evolution
  equations}, vol.~34 of Lecture Notes Series, Seoul National University
  Research Institute of Mathematics Global Analysis Research Center, Seoul,
  1996.

\bibitem{galpern2}
{\sc S.~A. Gal$\prime$pern}, {\em Cauchy's problem for an equation of {S}. {L}.
  {S}obolev's type}, Dokl. Akad. Nauk SSSR (N.S.), 104 (1955), pp.~815--818.

\bibitem{imanovilovpuel}
{\sc O.~Y. Imanuvilov and J.-P. Puel}, {\em Global {C}arleman estimates for
  weak solutions of elliptic nonhomogeneous {D}irichlet problems}, Int. Math.
  Res. Not.,  (2003), pp.~883--913.

\bibitem{Khapalov}
{\sc A.~Khapalov}, {\em Controllability of the wave equation with moving point
  control}, Appl. Math. Optim., 31 (1995), pp.~155--175.

\bibitem{KK}
{\sc K.~Kunisch and D.~A. Souza}, {\em On the one-dimensional nonlinear
  monodomain equations with moving controls}, Journal de Math{\'e}matiques
  Pures et Appliqu{\'e}es,  (2018).

\bibitem{lagnese}
{\sc J.~E. Lagnese}, {\em General boundary value problems for differential
  equations of sobolev type}, SIAM J. Math. Anal., 3 (1972), pp.~105--119.

\bibitem{Macia}
{\sc F.~Maci\`a and E.~Zuazua}, {\em On the lack of observability for wave
  equations: a gaussian beam approach.}, Asymptot. Anal., 32 (2002), pp.~1--26.

\bibitem{Debayan}
{\sc D.~Maity}, {\em Some controllability results for linearized compressible
  navier-stokes system}, ESAIM: Control, Optimisation and Calculus of
  Variations, 21 (2015), pp.~1002--1028.

\bibitem{MRR}
{\sc P.~Martin, L.~Rosier, and P.~Rouchon}, {\em Null controllability of the
  structurally damped wave equation with moving control}, SIAM J. Control
  Optim., 51 (2013), pp.~660--684.

\bibitem{Micu}
{\sc S.~Micu}, {\em On the controllability of the linearized
  {B}enjamin-{B}ona-{M}ahony equation}, SIAM J. Control Optim., 39 (2001),
  pp.~1677--1696 (electronic).

\bibitem{Milne}
{\sc E.~A. Milne}, {\em The diffusion of imprisoned radiation through a gas},
  Journal of the London Mathematical Society, 1 (1926), pp.~40--51.

\bibitem{Showalter}
{\sc R.~E. Showalter}, {\em Partial differential equations of
  {S}obolev-{G}alpern type}, Pacific J. Math., 31 (1969), pp.~787--793.

\bibitem{Showalterbook}
\leavevmode\vrule height 2pt depth -1.6pt width 23pt, {\em Hilbert Space
  Mtheods in Partial Differential Equations}, Dover, New York, 1979.

\bibitem{ST}
{\sc R.~E. Showalter and T.~W. Ting}, {\em Pseudoparabolic partial differential
  equations}, SIAM J. Math. Anal., 1 (1970), pp.~1--26.

\bibitem{sobolev}
{\sc S.~L. Sobolev}, {\em On a new problem of mathematical physics}, Izv. Akad.
  Nauk SSSR. Ser. Mat., 18 (1954), pp.~3--50.

\bibitem{tao_gao_yao}
{\sc Q.~Tao, H.~Gao, and Z.-a. Yao}, {\em Null controllability of
  pseudo-parabolic equation with moving control}, J. Math. Anal. Appl., 418
  (2014), pp.~998--1005.

\bibitem{yamamoto}
{\sc M.~Yamamoto}, {\em One unique continuation for the linearized
  {B}enjamin-{B}ona-{M}ahony equation}, J. Inverse Ill-posed Probl., 11 (2003),
  pp.~537--543.

\bibitem{zuazua_zhang}
{\sc X.~Zhang and E.~Zuazua}, {\em Unique continuation for the linearized
  {B}enjamin-{B}ona-{M}ahony equation with space-dependent potential}, Math.
  Ann., 325 (2003), pp.~543--582.

\end{thebibliography}

\end{document}